\documentclass[11pt]{amsart}
\usepackage{amsmath,amssymb,amsthm,mathtools}
\usepackage[left=2.5cm,right=2.5cm,top=2.5cm,bottom=2.5cm]{geometry}
\usepackage{mathrsfs}
\usepackage{enumitem}
\usepackage{xcolor}
\usepackage{tikz}
\usepackage{hyperref}
\usepackage{cite}
\hypersetup{
 colorlinks=true,
 linkcolor=blue,
 citecolor=blue,
 filecolor=blue,
 urlcolor=blue,
 pdftitle={A new approach to compactly supported stationary Euler flows}
}
\usepackage{verbatim}

\newtheorem{theorem}{Theorem}[section]
\newtheorem{proposition}[theorem]{Proposition}

\newtheorem{lemma}[theorem]{Lemma}
\theoremstyle{definition}

\newtheorem{remark}[theorem]{Remark}
\numberwithin{equation}{section}

\renewcommand{\epsilon}{\varepsilon}
\newcommand{\RR}{\mathbb R}
\newcommand{\NN}{\mathbb N}

\newcommand{\dd}{\,\mathrm d}
\newcommand{\norm}[1]{\left\|#1\right\|}

\DeclareMathOperator{\dist}{dist}
\DeclareMathOperator{\supp}{supp}
\DeclareMathOperator{\Div}{div}
\DeclareMathOperator{\curl}{curl}
\DeclareMathOperator{\Id}{Id}

\newcommand{\loc}{\mathrm{loc}}
\newcommand{\step}[1]{\smallskip\noindent\textit{#1}\quad}
\renewcommand\leq\leqslant
\renewcommand\geq\geqslant
\setlist[enumerate]{label=\textup{(\roman*)},leftmargin=*}

\begin{document}

\title[Compactly supported stationary Euler flows]{A new approach to compactly\\ supported stationary Euler flows}

\author[A. Enciso]{Alberto Enciso}
\address{   
\newline
\textbf{{\small Alberto Enciso}} 
\vspace{0.15cm}
\newline \indent Instituto de Ciencias Matem\'aticas, Consejo Superior de Investigaciones Cient\'\i ficas, 28049 Madrid, \newline \indent Spain}
\email{aenciso@icmat.es}

 \author[A. J. Fern\'andez]{Antonio J.\ Fern\'andez}
 \address{ \vspace{-0.4cm}
\newline 
\textbf{{\small Antonio J. Fern\'andez}} 
\vspace{0.15cm}
\newline \indent Departamento de Matem\'aticas, Universidad Aut\'onoma de Madrid, 28049 Madrid, Spain}
 \email{antonioj.fernandez@uam.es}

\date{}

\begin{abstract}
\vspace{-0.2cm}
We construct smooth, compactly supported stationary Euler flows in two
and three dimensions. The three-dimensional flows are axisymmetric with
swirl, and their pressure varies along some streamlines. The planar
flows are not locally radial. Both constructions start from simple radial profiles, which we perturb by solving scalar elliptic equations on fixed domains. We choose these equations so that their solutions can be exactly constant on open sets and meet these constant regions smoothly. Comparison arguments produce the constant regions, while an a priori regularity theorem ensures that all derivatives vanish where a solution meets them. This gives compact support and smoothness without having to describe the boundary of the support. \vspace{-0.8cm}
\end{abstract}

\maketitle

\section{Introduction}
We consider the stationary incompressible Euler equations
\begin{equation}\label{eq:euler}
 (u\cdot\nabla)u+\nabla p=0,\qquad \Div u=0
 \quad\textup{in }\RR^d, \qquad d \in \{2,3\}\,. 
\end{equation}
Gavrilov~\cite{Gavrilov} established the existence
of nonzero, compactly supported smooth solutions in three dimensions
in a beautiful and surprising construction. His proof is remarkably
short, yet the identities that make it work are far from obvious.
Constantin, La and Vicol~\cite{CLV} subsequently revisited the
construction through the Grad--Shafranov equation and placed it in
a broader framework that also applies to other hydrodynamic models.

These constructions first produce a local flow satisfying
\begin{equation}\label{intro:eq-localizable}
 u\cdot\nabla p=0.
\end{equation}
This additional identity permits localization: if
$\widetilde u=\chi(p)u$ and $Q'=\chi^2$, then
$(\widetilde u,Q(p))$ is again a stationary solution. Such a flow is
called \emph{localizable}. The cutoff step is simple; what is less
immediate is why a suitable local flow satisfying the additional
constraint \eqref{intro:eq-localizable} exists. This constraint had already been studied by Palumbo \cite{Palumbo} in the context of axisymmetric
magnetohydrostatic equilibria. There it is known as the \emph{isodynamic condition}, which in Euler variables takes the equivalent form $u\cdot\nabla |u|^2=0$. We refer to~\cite{PSS} for a detailed discussion of the relation between Palumbo's work and Gavrilov's construction.

Our aim is to develop a different approach that, we hope, makes the
existence of smooth compactly supported flows more transparent. We perturb radial profiles by solving
elliptic equations on fixed domains, choosing the equations so that
constant regions and smooth matching follow directly from comparison
and regularity estimates. No pressure cutoff or description of the
boundary of the support is needed. A further advantage is that
essentially the same analytic framework produces smooth compactly
supported flows on the plane that are not locally radial.

In three dimensions, our construction not only avoids imposing
\eqref{intro:eq-localizable}, but produces flows for which it fails.

\begin{theorem}\label{thm:spatial}
There exist $u\in C_c^\infty(\RR^3;\RR^3)$ and
$p\in C_c^\infty(\RR^3)$ solving \eqref{eq:euler} with
\mbox{$u\cdot\nabla p\not\equiv0$}.
\end{theorem}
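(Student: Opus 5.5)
We look for axisymmetric flows with swirl in cylindrical coordinates $(r,\theta,z)$, written through a stream function $\psi(r,z)$ and a swirl function:
\[
u=\frac{1}{r}\nabla\psi\times e_\theta+\frac{F(\psi)}{r}\,e_\theta .
\]
The stationary Euler equations then reduce to the Grad--Shafranov equation
\[
\partial_r^2\psi-\frac1r\partial_r\psi+\partial_z^2\psi=-r^2P'(\psi)-F(\psi)F'(\psi),
\]
with pressure $p=P(\psi)+\tfrac{1}{2}\ldots$ (the standard Bernoulli relation $p+\tfrac12|u|^2=H(\psi)$). If $\psi$ is smooth, compactly supported in $\{r>0\}$, and $F(0)=0$, $P(0)=0$, then $u$ and $p$ lie in $C_c^\infty(\RR^3)$.

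The strategy is to make $\psi$ exactly constant (equal to $0$) outside a compact set. We choose nonlinearities that vanish to infinite order at $\psi=0$, for instance of the type $e^{-1/\psi}$ on $\{\psi>0\}$, extended by $0$. We also want them to satisfy a strong absorption structure, so that solutions develop "dead cores" (flat regions). A prototype is
\[
L\psi:=-\Big(\partial_r^2-\tfrac1r\partial_r+\partial_z^2\Big)\psi=f(r,\psi)-g(\psi),
\]
where $g$ is sublinear near $0$, e.g.\ $g(s)\sim s^q$ with $q<1$. Such a $g$ gives compact support through comparison, and here one can use an explicit radial profile in the $(r,z)$ half-plane as a barrier.

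\textbf{Step 1: radial seed.} Start from an explicit solution in a small disc around a point $(r_0,0)$. This is a profile $\psi_0$ depending only on the distance $\rho$ to $(r_0,0)$, which is positive inside and flat, of order $(\rho_1-\rho)_+^{2/(1-q)}$, near the edge.

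\textbf{Step 2: perturbed elliptic problem.} Solve the perturbed problem on a fixed larger domain $\Omega\Subset\{r>0\}$ with zero Dirichlet data, via a variational method or a fixed point near $\psi_0$. The perturbation enters through the $r$-dependence of the coefficient $r^2P'(\psi)$, which breaks radial symmetry in $(r,z)$.

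\textbf{Step 3: dead core.} Show by comparison with a supersolution barrier that $\psi\equiv0$ on a neighbourhood of $\partial\Omega$.

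\textbf{Step 4: smoothness.} Show that $\psi$ is $C^\infty$, in particular across the free boundary $\partial\{\psi>0\}$. This is the main obstacle. Sublinear absorption gives only finite-order vanishing, and hence finite regularity. We therefore choose the nonlinearities so that the a priori estimate yields $|\psi|\le C_k\,\dist(x,\{\psi=0\})^k$ for every $k$. One way to try this is an absorption of the form $g(s)=s\,e^{-1/s}$ combined with a forcing of comparable strength. The estimate would be proved by iterated rescaled comparison, or by an interior regularity argument: on balls of radius $d$ touching the zero set, the equation $L\psi=h(\psi)$ with $h$ flat at $0$ bootstraps Schauder estimates. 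Since $h(\psi)$ is flat wherever $\psi$ is small, all derivatives of $\psi$ vanish on the boundary of the support, and $\psi$ extends smoothly by $0$.

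\textbf{Step 5: $u\cdot\nabla p\not\equiv0$.} Compute $u\cdot\nabla p=-u\cdot\nabla\tfrac12|u|^2$ along streamlines, which lie on the level sets of $\psi$. Because $P'\neq0$ and $r$ varies along a level curve, $|u|^2$ is nonconstant on some level set where $\nabla\psi\ne0$. This can be checked perturbatively near the radial seed, where the level sets are nearly circles centred at $(r_0,0)$ and $|u|^2$ varies with $r$ at leading order.
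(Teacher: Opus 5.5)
Your proposal has genuine gaps in the nonlinearity (Steps 3--4), in the existence step (Step 2) and in the nonlocalizability step (Step 5).

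\textbf{Steps 3 and 4 are incompatible.} Sublinear absorption $g(s)\sim s^q$ produces dead cores, but, as you note, only contact of finite order $2/(1-q)$. A law that is flat at $\psi=0$ (built from $e^{-1/\psi}$, or $g(s)=s\,e^{-1/s}$), or merely Lipschitz there, produces no dead core at all. The right-hand side is then $c(X)\psi$ with $c$ bounded, and Harnack's inequality forces a nonnegative solution on a connected domain to be positive everywhere or identically zero; unique continuation does the same without a sign. In one-dimensional terms, $\int_0\dd s/\sqrt{G(s)}$ with $G'=g$ diverges, so no barrier reaches zero at finite distance.

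The missing idea is a borderline law. Its slope must be unbounded at the plateau, $F(t)/t\to\infty$, while $|\partial_t^jF(t)|\le C_{j,\eta}t^{1-j-\eta}$ for every $\eta>0$. The paper fixes the flat profile $H(s)=e^{-\log^2(1/s)}$ first and defines the law by inversion: $F=H''\circ H^{-1}$, and $f_a(\phi_a)=\phi_a''+\phi_a'/\rho$ for the seed. The seed is then an exact $C^\infty$ solution. Smoothness comes from Theorem~\ref{common:lem-flat-regularity}, not from a Schauder bootstrap with a flat right-hand side. Harnack applied to $v+r^{2/\theta}$ gives $v\le C_N\dist(X,Z)^N$. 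Rescaled interior estimates on balls of radius $(\kappa v(X))^{\theta/2}$ then make every derivative flat. Because the slope is unbounded, the linearization at the seed is not a bounded operator. That is why the paper replaces ``a fixed point near $\psi_0$'' by the compact map $\psi=\mathcal T[k(\cdot,\psi)]$, the singular inverse $\mathcal R$, and Leray--Schauder degree.

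\textbf{Step 2 overlooks a degeneracy.} Translates of the compactly supported planar seed are again solutions, so $\partial_xU$ and $\partial_zU$ lie in the kernel of the limiting linearized operator. Reflection in $z$ removes the second. The first, $Z=\partial_xU=v(\rho)\cos\theta$, imposes the solvability condition $\int ZE_1=0$ on the first-order curvature defect $E_1=U_x+2x\,h(U)$, where $h$ is the part of the law multiplied by $r^2$. With no swirl, $h=f$, and integrating by parts with $f(\phi)=v'+v/\rho$ gives $\int ZE_1=\pi\int\rho v^2\dd\rho>0$. So there is no solution $U+\varepsilon w$ with $w$ bounded, whatever Bernoulli law and radial seed you choose. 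The paper cancels the pairing $\pi I-4\pi\lambda J$ by tuning the swirl strength $\lambda$ and fixes the phase by $\int Zw=0$. It also proves that the even kernel is exactly $\operatorname{span}\{Z\}$ for large annulus radius; your disc seed would need an analogous nondegeneracy result.

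\textbf{Step 5 is not a proof.} The $O(\varepsilon)$ variation of $r^{-2}$ along a level curve has the same order as the contributions of the correction $w$ and of the displacement of the curve, and these can cancel. In every localizable flow, Gavrilov's included, $r$ varies along closed meridional level curves while $|u|^2$ is constant on them. The paper writes $w_*=b(\rho)\cos\theta$ and computes the angular coefficient $A=vb'-v'b-\rho(v^2+2\lambda_*q(\phi))$. It derives $A'+A/\rho=v^2-4\lambda_*q(\phi)$. It then excludes $A\equiv0$ by uniqueness for $\phi'=2\sqrt{\lambda_*}\,\phi(2-\phi)$ with $\phi(a)=0$.
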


The velocities are axisymmetric with nonzero swirl and supported in
toroidal shells of large major radius; see
Theorem~\ref{thm:spatial-precise}. Thus, the distinction from the
localizable constructions \cite{CLV, Gavrilov} concerns the pressure along streamlines,
not axial symmetry. Nonlocalizable compactly supported flows were
previously obtained in~\cite{DEPS}, with piecewise smooth velocities
discontinuous across a surface. We refer as well to \cite{PSS} for a complementary rigidity theorem for localizable flows under interior analyticity and boundary hypotheses on the Bernoulli function.

In two dimensions, a smooth compactly supported radial stream function
$\psi$ gives a stationary flow
$u=\nabla^\perp\psi:=(\partial_2\psi,-\partial_1\psi)$, as do
superpositions of such flows with disjoint supports and different
centers. These examples are \emph{locally radial}: on each connected
component of $\{\nabla\psi\ne0\}$, the stream function is radial
about a fixed center. Our planar construction goes beyond this class.

\begin{theorem}\label{thm:planar}
There exist $u\in C_c^\infty(\RR^2;\RR^2)$ and
$p\in C_c^\infty(\RR^2)$ solving \eqref{eq:euler} such that $u$ is not
locally radial.
\end{theorem}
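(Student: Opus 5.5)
In two dimensions, a stream function solving a semilinear equation gives a stationary flow. If $\Delta\psi=F(\psi)$ in $\RR^2$ with $F$ smooth, then $u=\nabla^\perp\psi$ solves \eqref{eq:euler} with pressure $p=G(\psi)-\tfrac12|\nabla\psi|^2$, where $G'=F$. The vorticity $\Delta\psi$ is then constant on streamlines, which is exactly the stationarity condition. If moreover $\psi$ is compactly supported and $G(0)=0$, then $p$ is compactly supported too. The plan is therefore to produce a smooth, compactly supported solution of $\Delta\psi=F(\psi)$ that is not radial on some component of $\{\nabla\psi\neq0\}$.

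\textbf{Choice of nonlinearity and domain.} I would choose $F$ so that the equation admits solutions that are exactly constant on open sets. Concretely, take $F(s)=f(s)$ with $f\equiv0$ for $s\le0$ and $f(s)>0$ for $s>0$, growing slowly near $0$, for instance with all derivatives vanishing at $0$. This is an obstacle-type nonlinearity with a dead core.

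Start from a radial annular profile $\psi_0(|x|)$: zero outside an annulus, positive inside, solving the equation. Perturb it by solving a Dirichlet problem $\Delta\psi=F(\psi)$ on a fixed non-radial bounded domain $\Omega$. Take $\Omega$ to be a large disk with a boundary datum equal to a small negative constant, or a slightly deformed annulus with non-radial data.

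\textbf{Key steps.}
\begin{enumerate}
\item Obtain a solution by sub/supersolution or variational methods.
\item Use comparison arguments, with radial barriers, to show that $\psi$ is exactly $0$, or the constant where $F$ vanishes, on a neighbourhood of $\partial\Omega$. This is the dead-core phenomenon: since $F(s)\sim s^q$ with $q<1$, compact support follows from the standard barrier argument for sublinear absorption. Extending $\psi$ by that constant then gives a compactly supported function on $\RR^2$.
\item Invoke the a priori regularity theorem announced in the abstract. It should show that where $\psi$ meets its constant region, all derivatives vanish, so the extension is $C^\infty$. The point is to tune $f$ near its zero so that bootstrap estimates, in weighted or Campanato form, give $|D^k\psi|\to0$ at the free boundary for every $k$, without any description of that boundary.
\end{enumerate}

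\textbf{Non-radiality.} Non-radiality comes from the non-radial domain or data. If $\psi$ were radial about a centre $c$ on a component $U$ of $\{\nabla\psi\neq0\}$, then by unique continuation for the analytic-type ODE in the region where $F$ is analytic, radiality would propagate up to the level sets touching the data. That would contradict the non-symmetric boundary condition. Alternatively, one can argue perturbatively: linearize at $\psi_0$ and show that a non-radial Fourier mode is generated at first order in the perturbation parameter.

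\textbf{Main obstacle.} I expect the hardest step to be the smooth matching, that is, the infinite-order vanishing at the boundary of the constant set. Plain sublinear absorption only yields finite regularity, roughly $C^{2/(1-q)}$. To reach $C^\infty$, I would take $f$ flat at $0$, such as $f(s)=e^{-1/s}$. With such $f$, however, dead cores need more care, because they require some non-Lipschitz behaviour. So the first thing to try is an $f$ which is non-Lipschitz but whose associated ODE profiles are flat. An example is $f(s)=s^{q}$ composed with a reparametrization of the stream function, $\psi=\Phi(\phi)$ with $\Phi$ flat at $0$: solve an elliptic equation for $\phi$ with a genuine dead core, then obtain smoothness through $\Phi$, checking that the equation for $\phi$ remains of the required form.
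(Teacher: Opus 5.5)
Your plan relies on a compactly supported solution of a \emph{single} autonomous equation $\Delta\psi=F(\psi)$ on $\RR^2$, with non-radiality coming from a non-radial domain or boundary datum. This is where it fails.

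Your second key step (exact collars) already removes the boundary from the problem. Once $\psi$ equals the plateau constant on a collar of $\partial\Omega$, its extension is a compactly supported solution of the same global equation on $\RR^2$. Every component of $\{\nabla\psi\ne0\}$ is then separated from $\partial\Omega$ by a region where $\nabla\psi=0$, so neither the shape of $\Omega$ nor the data survive. Unique continuation cannot carry information across the plateau values either, since $F$ must be non-Lipschitz there.

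Worse, for a profile that rises from zero and returns to zero, a single global law forces local radiality by Brock's symmetry results; the paper points to \cite{Brock} and \cite[Theorem~A.2]{EFR} for this. So no choice of $F$, $\Omega$ or data, and no first-order perturbation argument within one law, can give the theorem.

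Your concrete sign choice also fails. With $f\ge0$, a compactly supported $\psi$ is subharmonic, hence $\psi\le0$, hence harmonic and identically zero. So your annular reference profile cannot exist. The law must be positive near the plateau value and negative near the top; the paper takes it equal to $-1$ there.

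The missing idea is to give up autonomy in a controlled way. The paper inverts the increasing and decreasing branches of an annular profile $U_a$ separately, obtaining two laws $F_{\mathrm i,a}$ and $F_{\mathrm o,a}$ that both equal $-1$ above a common threshold. It glues them with a cutoff $\chi(|x|-a)$ whose transition region $\bigl||x|-a\bigr|\le1$ lies where $U_a>3+\sigma$. The resulting equation $\Delta\psi=F_a(|x|,\psi)$ is non-autonomous, so Brock's theorem no longer applies. Still, $\partial_rF_a(|x|,\psi)=0$ for every nonnegative $\psi$ uniformly close to $U_a$, and that is all one needs for $\nabla^\perp\psi\cdot\nabla\Delta\psi=0$.

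Non-radial solutions then come from bifurcation in the annulus radius $a$, not from boundary data. In a fixed angular mode $\ell$, the first eigenvalue of the singular linearization changes sign as $a$ increases. In the even, $\ell$-fold symmetric class the Morse index therefore jumps from $2$ to $3$. The Leray--Schauder degree of the compact map $v\mapsto\mathcal T_a[k_a(\cdot,U_a+v)]-U_a$ changes accordingly, giving small solutions with limiting direction $q_*(r)\cos(\ell\theta)$. The nonzero $\ell$-th Fourier coefficient, together with $\ell$-fold symmetry, then rules out every possible centre of local radiality.

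Your regularity step is also left open. The nonlinearity $s^q$ gives only finite-order contact, and a flat $f$ destroys the dead core. The paper resolves this with $F=H''\circ H^{-1}$, where $H(s)=e^{-\log^2(1/s)}$, which obeys $|F^{(j)}(t)|\le C_{j,\eta}t^{1-j-\eta}$. This is just non-Lipschitz enough for exact collars to exist, yet close enough to linear that an additive Harnack inequality gives infinite-order vanishing.
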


Theorem~\ref{thm:planar-precise} gives a more detailed statement of
Theorem~\ref{thm:planar}. In particular, we construct a sequence of
such flows converging in every $C^q$ norm to a radial flow, with all
velocity supports contained in a fixed compact annulus. These are
the first examples of compactly supported planar Euler flows of
class $C^\infty$ that are not locally radial. Applying the localization
of rigid rotation from~\cite{EFRrot}, we also obtain smooth uniformly
rotating flows; see Remark~\ref{p:rem:rotating}.

Such flows were previously constructed at lower regularity.
At the level of weak solutions, the convex integration method of
Choffrut and Sz\'ekelyhidi~\cite{CS} yields compactly supported
$L^\infty$ stationary Euler flows in the plane that are not locally
radial. G\'omez-Serrano, Park and Shi~\cite{GPS} subsequently
constructed such flows of vortex patch type, with continuous
velocities that are not $C^1$. In joint work with Sicbaldi~\cite{EFRS},
we obtained examples of the same regularity through a Schiffer-type
problem on annuli. Finally, in~\cite{EFR} we constructed examples
of class $C^k$ for every prescribed finite $k$. Here each constructed
solution is itself $C^\infty$. The present construction uses a compact
inverse on a fixed domain in place of the derivative-loss framework
of~\cite{EFR}, and does not track a moving support.

Several rigidity results explain the role of the geometry of the
stagnation set. Sharafutdinov's classification~\cite{Sharafutdinov}
implies that smooth compactly supported planar localizable flows
are locally radial. Hamel and Nadirashvili~\cite{HN} proved circularity
for smooth flows tangent to the boundary of a circular annulus and
having no stagnation points in the closed annulus. For smooth
compactly supported flows, Ruiz~\cite[Theorem~A]{Ruiz}
showed that the flow is circular if its non-stagnation set is an
annular domain with continuous boundary. More recently, De Regibus,
Esposito and Ruiz~\cite{DER} proved circularity for
classical flows with finite energy and connected stagnation set,
under the additional assumption that the vorticity has no local
extrema outside a ball.

Figure~\ref{fig:flow-geometry} illustrates the reference toroidal layer
and the angular deformation of interior planar level curves.

\begin{figure}[!b]
\centering
\begingroup
\definecolor{ceFigColor0}{HTML}{25343F}
\definecolor{ceFigColor1}{HTML}{86b1c9}
\definecolor{ceFigColor2}{HTML}{85b0c8}
\definecolor{ceFigColor3}{HTML}{88b3cb}
\definecolor{ceFigColor4}{HTML}{8ab5cd}
\definecolor{ceFigColor5}{HTML}{82adc5}
\definecolor{ceFigColor6}{HTML}{83aec6}
\definecolor{ceFigColor7}{HTML}{8eb9d1}
\definecolor{ceFigColor8}{HTML}{8cb7cf}
\definecolor{ceFigColor9}{HTML}{87b2ca}
\definecolor{ceFigColor10}{HTML}{8bb6ce}
\definecolor{ceFigColor11}{HTML}{80abc3}
\definecolor{ceFigColor12}{HTML}{8fbad2}
\definecolor{ceFigColor13}{HTML}{81acc4}
\definecolor{ceFigColor14}{HTML}{7faac2}
\definecolor{ceFigColor15}{HTML}{92bdd5}
\definecolor{ceFigColor16}{HTML}{91bcd4}
\definecolor{ceFigColor17}{HTML}{89b4cc}
\definecolor{ceFigColor18}{HTML}{8db8d0}
\definecolor{ceFigColor19}{HTML}{7ea9c1}
\definecolor{ceFigColor20}{HTML}{94bfd7}
\definecolor{ceFigColor21}{HTML}{7199ac}
\definecolor{ceFigColor22}{HTML}{7098ab}
\definecolor{ceFigColor23}{HTML}{729aad}
\definecolor{ceFigColor24}{HTML}{739bae}
\definecolor{ceFigColor25}{HTML}{6e96a9}
\definecolor{ceFigColor26}{HTML}{6f97aa}
\definecolor{ceFigColor27}{HTML}{84afc7}
\definecolor{ceFigColor28}{HTML}{7da8c0}
\definecolor{ceFigColor29}{HTML}{769eb1}
\definecolor{ceFigColor30}{HTML}{759db0}
\definecolor{ceFigColor31}{HTML}{96c1d9}
\definecolor{ceFigColor32}{HTML}{95c0d8}
\definecolor{ceFigColor33}{HTML}{749caf}
\definecolor{ceFigColor34}{HTML}{6d95a8}
\definecolor{ceFigColor35}{HTML}{6c94a7}
\definecolor{ceFigColor36}{HTML}{779fb2}
\definecolor{ceFigColor37}{HTML}{79a1b4}
\definecolor{ceFigColor38}{HTML}{78a0b3}
\definecolor{ceFigColor39}{HTML}{98c3db}
\definecolor{ceFigColor40}{HTML}{93bed6}
\definecolor{ceFigColor41}{HTML}{6b93a6}
\definecolor{ceFigColor42}{HTML}{7ba6be}
\definecolor{ceFigColor43}{HTML}{7aa2b5}
\definecolor{ceFigColor44}{HTML}{7ba3b6}
\definecolor{ceFigColor45}{HTML}{9ac5dd}
\definecolor{ceFigColor46}{HTML}{99c4dc}
\definecolor{ceFigColor47}{HTML}{7ca4b7}
\definecolor{ceFigColor48}{HTML}{97c2da}
\definecolor{ceFigColor49}{HTML}{9bc6de}
\definecolor{ceFigColor50}{HTML}{7aa5bd}
\definecolor{ceFigColor51}{HTML}{7ea6b9}
\definecolor{ceFigColor52}{HTML}{7da5b8}
\definecolor{ceFigColor53}{HTML}{6a92a5}
\definecolor{ceFigColor54}{HTML}{7ca7bf}
\definecolor{ceFigColor55}{HTML}{7fa7ba}
\definecolor{ceFigColor56}{HTML}{9dc8e0}
\definecolor{ceFigColor57}{HTML}{90bbd3}
\definecolor{ceFigColor58}{HTML}{80a8bb}
\definecolor{ceFigColor59}{HTML}{6991a4}
\definecolor{ceFigColor60}{HTML}{9ec9e1}
\definecolor{ceFigColor61}{HTML}{81a9bc}
\definecolor{ceFigColor62}{HTML}{79a4bc}
\definecolor{ceFigColor63}{HTML}{a0cbe3}
\definecolor{ceFigColor64}{HTML}{82aabd}
\definecolor{ceFigColor65}{HTML}{9fcae2}
\definecolor{ceFigColor66}{HTML}{9cc7df}
\definecolor{ceFigColor67}{HTML}{6890a3}
\definecolor{ceFigColor68}{HTML}{a1cce4}
\definecolor{ceFigColor69}{HTML}{83abbe}
\definecolor{ceFigColor70}{HTML}{78a3bb}
\definecolor{ceFigColor71}{HTML}{a2cde5}
\definecolor{ceFigColor72}{HTML}{84acbf}
\definecolor{ceFigColor73}{HTML}{678fa2}
\definecolor{ceFigColor74}{HTML}{a3cee6}
\definecolor{ceFigColor75}{HTML}{85adc0}
\definecolor{ceFigColor76}{HTML}{77a2ba}
\definecolor{ceFigColor77}{HTML}{a4cfe7}
\definecolor{ceFigColor78}{HTML}{a5d0e8}
\definecolor{ceFigColor79}{HTML}{76a1b9}
\definecolor{ceFigColor80}{HTML}{a6d1e9}
\definecolor{ceFigColor81}{HTML}{86aec1}
\definecolor{ceFigColor82}{HTML}{668ea1}
\definecolor{ceFigColor83}{HTML}{a7d2ea}
\definecolor{ceFigColor84}{HTML}{87afc2}
\definecolor{ceFigColor85}{HTML}{a8d3eb}
\definecolor{ceFigColor86}{HTML}{75a0b8}
\definecolor{ceFigColor87}{HTML}{a9d4ec}
\definecolor{ceFigColor88}{HTML}{88b0c3}
\definecolor{ceFigColor89}{HTML}{E0B997}
\definecolor{ceFigColor90}{HTML}{79848D}
\definecolor{ceFigColor91}{HTML}{E9EFF3}
\definecolor{ceFigColor92}{HTML}{FFFFFF}
\definecolor{ceFigColor93}{HTML}{B6C2CB}
\definecolor{ceFigColor94}{HTML}{24628A}

\endgroup
\caption{Schematic reference geometry. (a) A toroidal layer obtained by
rotating a meridional annulus about the symmetry axis. (b) An angular
deformation of regular planar level curves in an interior annular band
$B$, with $\ell=5$ for illustration; arrows indicate velocity.
Dashed circles mark reference levels and the edges of $B$.
Neither panel depicts the actual free boundary.}
\label{fig:flow-geometry}
\end{figure}
\vspace{-0.25cm}
\subsection{Strategy of the proof}
The starting point is a radial stream function that is constant outside
an annulus. In three dimensions we place its meridional profile far
from the symmetry axis and correct the small error introduced by
curvature. In two dimensions we vary the radius of the annulus until
an angular mode bifurcates. The common problem is to carry out these
perturbations while preserving exact constant regions and smooth
contact with them. The proof combines two shared analytic ideas
with two different geometric applications.

\vspace{-0.25cm}
\subsubsection*{First idea: A priori regularity at the contact set}
Elliptic equations with non-Lipschitz nonlinearities can admit
nontrivial solutions that vanish on open sets~\cite[Section~6]{BBC}.
We first establish conditions under which such solutions
are smooth, with all derivatives vanishing on their zero sets.

Consider a nonnegative, locally bounded weak solution $v$ to
\[
 \Delta v+b(X)\cdot\nabla v=\mathcal F(X,v),
\]
where $b$ is smooth and $\mathcal F$ is continuous up to $t=0$,
smooth for $t>0$, and satisfies $\mathcal F(X,0)=0$. Assume that, locally uniformly in $X$,
\begin{equation}\label{intro:eq-near-linear}
 |D_X^\alpha\partial_t^j\mathcal F(X,t)|
 \le C_{\alpha,j,\eta}t^{1-j-\eta},\qquad 0<t\ll1,
\end{equation}
for every multi-index $\alpha$, every integer $j\ge0$ and every
$\eta>0$. The constants in these estimates may depend on $\eta$.
Theorem~\ref{common:lem-flat-regularity} shows that $v$ is smooth
and all its derivatives vanish wherever $v=0$, without any
assumption on the geometry of the zero set.
 
The proof has two parts. First, Harnack's inequality and the bounds
for $\mathcal F$ imply that $v$ vanishes to every order at each of
its zeros. More precisely, near any point $X_0$ with $v(X_0)=0$,
\[
 0\le v(X)\le C_N|X-X_0|^N,\qquad N\ge1.
\]
We then apply interior elliptic estimates on balls contained in
$\{v>0\}$, with radii chosen in terms of the value of $v$ at their
centers. The derivative bounds in~\eqref{intro:eq-near-linear}
yield corresponding estimates for all derivatives of $v$, which
give smoothness across the zero set. The nonlinearities arising from our reference profiles satisfy
\eqref{intro:eq-near-linear}.

In addition, comparison barriers ensure that nearby stream functions attain their prescribed constant values in neighborhoods of the boundary components of
the fixed domain (Lemma~\ref{common:lem-exact-collar}).
We can therefore extend them by these constants outside the
domain. The regularity theorem, Theorem \ref{common:lem-flat-regularity}, guarantees smoothness wherever a constant value is attained, without requiring us to determine
the boundaries of the constant regions. Section~\ref{common:sec-prelim}
develops these regularity and localization tools.

\vspace{-0.25cm}
\subsubsection*{Second idea: A compact fixed-point formulation} The derivative of the nonlinearity is unbounded at the constant states, so we do not linearize the original equation directly. Instead, on a fixed domain $D$, we rewrite the equation as
\[
 -L\psi+g(X,\psi)=k(X,\psi), \qquad \psi=\mathcal T[k(\cdot,\psi)],
\]
where $k$ is smooth, $t \mapsto g(X,t)-t$ is nondecreasing for each $X \in D$, and $\mathcal{T}$ is the inverse of $-L+g$ with the prescribed boundary data. Comparison gives continuity of this fixed-point map, while
elliptic estimates give compactness. To determine its first-order behavior, we study normalized differences of nearby solutions.
Proposition~\ref{common:prop-linearization} approximates these
differences by solutions to a linear Dirichlet problem with a
singular potential on the reference annulus, extended by zero
to the constant regions. These expansions allow us to apply
Leray--Schauder degree without requiring $\mathcal T$ to be
$C^1$ on a neighborhood of the reference source. Section~\ref{common:sec-compact}
develops this compact formulation.

\vspace{-0.25cm}
\subsubsection*{The case of 3$d$ fluids} We place the radial reference profile in the meridional plane
at distance $R$ from the symmetry axis. Note that it satisfies the
axisymmetric equation up to an error of order $R^{-1}$.

The kernel of the limiting linearized operator $(R=\infty)$ consists of
the two translation modes
(Proposition~\ref{s:prop:spec-planar-kernel}).
Reflection symmetry excludes the vertical mode, and we require
the correction to be orthogonal to the remaining mode.
Solvability then requires the leading curvature error to be
orthogonal to this mode as well. We achieve this by adjusting
the parameter controlling the azimuthal velocity
(Lemma~\ref{s:lem:Melnikov}). A topological degree argument then gives solutions for all sufficiently large $R$.

To prove nonlocalizability, we denote by $u$ the constructed solution and expand $|u|^2$ along level curves
of the corresponding stream function. The first-order angular term cannot
vanish identically, since this would force the reference profile
to satisfy a smooth first-order ODE with initial value at an
equilibrium, and uniqueness would then imply that the profile is
constant, giving a contradiction. Consequently, for all sufficiently
large $R$, $|u|^2$ is nonconstant along at least one level curve.
Since the Bernoulli function is constant on this curve, the
pressure is nonconstant there, and hence
the solution is not localizable (Proposition~\ref{s:prop:nl-euler-field}).
Section~\ref{s:sec:profile} contains the three-dimensional construction.

\vspace{-0.25cm}
\subsubsection*{The case of 2$d$ fluids}
The planar reference profile increases from zero and then decreases
back to zero. In this setting, a single global equation
$\Delta\psi=f(\psi)$ would force local radiality by the symmetry
results of~\cite{Brock} (see~\cite[Theorem~A.2]{EFR}). Following
\cite{EFR}, we use separate nonlinearities on the two profile branches
and interpolate between them where both prescribe the same
constant vorticity. For nearby solutions, the vorticity is then
locally a function of the stream function, which ensures the
stationary two-dimensional Euler equation (Lemma~\ref{p:lem:autonomy}).

The radial derivative changes sign once, giving a negative
eigenvalue in the first angular mode. In angular mode $n$, the
linearized operator contains the positive term $n^2/r^2$.
We choose $n$ sufficiently large so that the lowest eigenvalue in
this mode is positive at a fixed annulus radius. Keeping $n$
fixed and increasing the radius eventually makes this eigenvalue
negative. In a suitable symmetry class, all other eigenvalues
remain separated from zero (Proposition~\ref{p:prop:spectral-input}). The resulting change of degree produces nonradial solutions converging to a radial
profile (Proposition~\ref{p:prop:bifurcation}). Their limiting angular mode in \eqref{p:eq:tangent}, together with their discrete
rotational symmetry, rules out local radiality. Section~\ref{p:sec-reference} contains the planar construction.

In both applications, exact collars and a priori regularity complete
the construction of smooth compactly supported flows.

\section{A priori regularity and exact localization}\label{common:sec-prelim}

We begin with two key analytic results used in both constructions.
First, we give conditions on the nonlinearity under which
nonnegative, locally bounded weak solutions are smooth and all
their derivatives vanish on their zero sets. We then introduce
a flat reference profile and use comparison arguments to obtain
a criterion for exact vanishing near the boundary. These results
will allow us to extend the stream functions smoothly by their
prescribed constant values, without determining the boundaries
of their constant regions.
 
\subsection{A priori regularity across the zero set}
Standard elliptic estimates give smoothness where the solution is
positive; the issue is regularity at its zeros. The model $v''=v^p$,
$0<p<1$, admits contact of finite order $2/(1-p)$. The hypothesis below
is available for every exponent below one and yields infinite-order
contact, without assumptions on the geometry of the zero set.

\begin{theorem}\label{common:lem-flat-regularity}
Let $V\subset\RR^2$ be open, let $b\in C^\infty(V;\RR^2)$, and let
\[
 \mathcal F\in C(V\times[0,\infty))
 \cap C^\infty(V\times(0,\infty)),
 \qquad \mathcal F(X,0)=0.
\]
Assume that, for every compact set $K\Subset V$, every $M>0$,
every multi-index $\alpha$, every integer $j\ge0$, and every
$\eta>0$, there is a constant $C = C(K,M,\alpha,j,\eta) > 0$ such that
\begin{equation}\label{common:eq-nearlinear}
 |D_X^\alpha\partial_t^j\mathcal F(X,t)|
 \le C t^{1-j-\eta},
 \qquad X\in K,\quad 0<t\le M.
\end{equation}
Let $v\in H^1_{\mathrm{loc}}(V)\cap L^\infty_{\mathrm{loc}}(V)$
be nonnegative almost everywhere and satisfy
\[
 \Delta v+b(X)\cdot\nabla v=\mathcal F(X,v)
 \quad\text{weakly in }V.
\]
Then $v\in C^{\infty}(V)$ 
and all its derivatives vanish on $Z:=\{X\in V:v(X)=0\}$.
If $Z\ne\emptyset$, then for every compact $K\Subset V$ and
integers $q,N\ge0$ there are constants $C = C(K,q,N)>0$ and
$\delta = \delta(K,q,N)>0$ such that
\begin{equation}\label{common:eq-jets-flat}
 |D^qv(X)|\le C \operatorname{dist}(X,Z)^N,
 \qquad X\in K\setminus Z,\quad
 \operatorname{dist}(X,Z)<\delta.
\end{equation}
\end{theorem}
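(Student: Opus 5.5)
The plan follows the two-part outline from the introduction: first show that $v$ vanishes to infinite order at each zero, then bootstrap derivative bounds on balls inside $\{v>0\}$ whose radii are tied to $\dist(X,Z)$.

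\emph{Step 0: continuity.} Fix $K\Subset V$ and let $M$ bound $v$ on a neighborhood of $K$. By \eqref{common:eq-nearlinear} with $\alpha=0$, $j=0$, $\eta=\tfrac12$, the right-hand side satisfies $|\mathcal F(X,v)|\le C v^{1/2}\le C'$, so it is bounded. De Giorgi--Nash--Moser and $W^{2,p}$ estimates then give $v\in W^{2,p}_{\loc}$ for all $p<\infty$, hence $v\in C^{1,\gamma}_{\loc}$. In particular $Z$ is relatively closed and $\{v>0\}$ is open. On $\{v>0\}$ the function $\mathcal F$ is smooth in $(X,t)$, so Schauder bootstrapping gives $v\in C^\infty(\{v>0\})$.

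\emph{Step 1: infinite-order vanishing.} The idea is to write the equation as a linear one with a large but controlled potential. Set $c(X):=\mathcal F(X,v)/v$ on $\{v>0\}$, so that $|c|\le C_\eta v^{-\eta}$, and write $\Delta v+b\cdot\nabla v-c\,v=0$.

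Fix $X_0\in Z$ and put $m(r):=\sup_{B_r(X_0)}v$. I want a decay improvement $m(r/2)\le \theta(r)\,m(r)$, with $\theta(r)\to0$ as $r\to0$ fast enough to iterate to any power. Since $v(X_0)=0$, the Harnack route has to be phrased carefully.

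\begin{itemize}
\item A subsolution estimate gives $\Delta v+b\cdot\nabla v\ge -C_\eta v^{1-\eta}$.
\item Harnack for the nonnegative supersolution $v$ of $\Delta v+b\cdot\nabla v\le C v^{1-\eta}$ on $B_r$ gives a weak Harnack inequality:
\[
\Big(\fint_{B_{r/2}}v^{p}\Big)^{1/p}\le C\Big(\inf_{B_{r/4}}v + r^2 m(r)^{1-\eta}\Big).
\]
Here $\inf_{B_{r/4}} v=0$.
\item Combining this with the local maximum principle yields $m(r/8)\le C r^2 m(r)^{1-\eta}$.
\end{itemize}

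Iterating $m_{k+1}\le C 8^{-2k}m_k^{1-\eta}$ with $\eta$ small gives $m(r)\le C_N r^N$ for every $N$, with constants uniform in $X_0\in Z\cap K$. This is the bound $0\le v\le C_N|X-X_0|^N$ from the introduction.

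\emph{Step 2: derivative bounds.} Take $X\in K\setminus Z$ and let $d:=\dist(X,Z)$, assumed small. By Step 1, $v\le C_N d^N$ on $B_{d/2}(X)$. Rescale to $w(Y):=v(X+\tfrac d2 Y)$ on $B_1$; it satisfies
\[
\Delta w+\tfrac d2\, b\cdot\nabla w=\tfrac{d^2}{4}\,\mathcal F(\cdot,w).
\]
I claim the $C^q$ norm of the right-hand side on $B_{1/2}$ is controlled by $C\,\|w\|^{1-\eta'}$ times powers of quotients of derivatives of $w$ over $w$. By the chain rule (Faà di Bruno), $D^q[\mathcal F(X,w)]$ is a sum of terms $\partial_X^\alpha\partial_t^j\mathcal F\cdot\prod_{i=1}^j D^{\beta_i}w$, each bounded by $w^{1-j-\eta}\prod_i|D^{\beta_i}w|$. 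This is where the main obstacle lies: the negative powers $w^{-j}$ are dangerous because $w$ may be much smaller at the center than its supremum. To handle this, I would first apply the same weak Harnack argument in $B_1$, which is legitimate since $w>0$ there; the potential $c$ is large only where $w$ is small. I would rather avoid that, though, and instead use an inductive scheme in $q$. Assuming $|D^iw|\le C w^{1-\eta_i}$ pointwise near zero for $i<q$, where the constant is allowed to deteriorate, each term is bounded by $w^{1-\eta}$. Schauder estimates then give $|D^q v(X)|\le C d^{-q}\sup_{B_{d/2}}v^{1-\eta}\le C d^{N(1-\eta)-q}$. Since $N$ is arbitrary, this gives \eqref{common:eq-jets-flat}.

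If the pointwise quotient bound $|D^iw|\lesssim w^{1-\eta}$ fails, I would instead shrink the ball to radius comparable to $v(X)^{\epsilon}$ times $d$, as the introduction hints ("radii chosen in terms of the value of $v$ at their centers"), and apply Harnack on that ball to make $w$ comparable to $v(X)$ throughout.

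\emph{Conclusion.} Finally, \eqref{common:eq-jets-flat} shows that every derivative of $v$ on $\{v>0\}$ extends continuously by zero to $Z$. Combined with $v\in C^1$ and the standard fact that a $C^k$ function with derivatives tending to zero at a closed set where it vanishes is $C^{k+1}$ there, an induction yields $v\in C^\infty(V)$ with all derivatives vanishing on $Z$.
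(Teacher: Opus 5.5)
Your Steps 0 and 1 are sound. Step 1 is a legitimate variant of the paper's argument. Treating $\mathcal F(X,v)$ as a right-hand side of size at most $Cm(r)^{1-\eta}$ in the weak Harnack inequality and the local maximum principle does give the decay inequality you state at a zero. The recursion then gives $m(r)\le C_\eta r^{2/\eta}$, the same rate the paper obtains from the additive Harnack inequality \eqref{common:eq-additive-harnack} for $v+r^{2/\theta}$.

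\textbf{The gap is in Step 2.} Every Fa\`a di Bruno term contains $\partial_t^j\mathcal F(X,w)$, which is only bounded by $w^{1-j-\eta}$. Rescaling by $d=\dist(X,Z)$ does not help: $w$ is neither normalized in amplitude nor known to be comparable to a constant on $B_{1/2}$. So $\sup_{B_{d/2}(X)}v$ does not control $\|\mathcal F(\cdot,w)\|_{C^{q-2,\gamma}}$.

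Your inductive hypothesis $|D^iw|\le Cw^{1-\eta_i}$ does not follow from anything established earlier. It is exactly the estimate that must be proved (compare \eqref{common:eq-derivative-small-value}), and the induction cannot even start, since the case $i=1$, $|\nabla v|\le Cv^{1-\eta}$, already requires it.

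Your fallback, ``apply Harnack on a ball of radius $v(X)^\epsilon d$,'' skips the real difficulty. Viewed as a linear equation, $v$ has potential $\mathcal F(X,v)/v$ of size $v^{-\eta}$. A Harnack inequality on $B_\rho(X)$ would therefore need a lower bound for $v$ on that ball, which is exactly what you want to conclude.

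\textbf{The missing ingredient} is a two-sided bound $c_*t\le v\le C_*t$ on $B_r(X)$, with $t=v(X)$ and $r$ a small power of $t$, obtained without any a priori lower bound. The paper gets it by shifting. For $k=r^{2/\theta}$, the potential $\mathcal F(X,v)/(v+k)$ is bounded by $k^{-\theta}=r^{-2}$ however small $v$ is, so the scaled Harnack inequality holds for $v+k$. Choosing $r=(\kappa t)^{\theta/2}$ with $\kappa$ small then gives the comparability.

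The paper then normalizes the amplitude as well as the scale: $u(Y)=v(X+rY)/t$ and $G(Y,s)=r^2t^{-1}\mathcal F(X+rY,ts)$. This gives $|D_Y^\alpha\partial_s^jG|\le Cr^{2+|\alpha|}t^{-\theta}\le C$ for $s$ in a fixed compact subinterval of $(0,\infty)$. Schauder estimates yield $|D^qv(X)|\le Ctr^{-q}=Ct^{1-q\theta/2}$. Taking $\theta=1/(q+2)$ and combining with the vanishing bound gives \eqref{common:eq-jets-flat}.

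Your own Step 1 tools could also supply the comparability. Run the same iteration centered at $X$, where now $\inf_{B_{\rho/4}(X)}v\le t$. This gives $m(\rho/4)\le C(t+\rho^2m(\rho)^{1-\eta})$, hence $\sup_{B_\rho(X)}v\le Ct$ and $\inf_{B_{\rho/4}(X)}v\ge ct$ for $\rho\le(\kappa t)^{\eta/2}$. Some such argument, together with the division by $t$, must be supplied. As written, Step 2 does not prove the derivative bounds.
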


\begin{proof}
Fix $K\Subset V$ and $0<\rho\le1/8$ so that $U_8\Subset V$, where
$U_j:=\{X:\operatorname{dist}(X,K)<j\rho\}$, $1\le j\le8$,
choose $M\ge\max\{1,\|v\|_{L^\infty(U_8)}\}$, and note that the constants below may depend on these choices and the coefficient bounds on $\overline U_8$. We split the rest of the proof into three parts. 

\step{1. Additive Harnack and infinite-order vanishing.}
The forcing $\mathcal F(\cdot,v)$ is bounded. The local energy estimate,
followed by interior $W^{2,p}$ regularity in dimension two, gives
\[
 \|v\|_{W^{2,p}(U_4)}
 \le C_p\bigl(M+\|\mathcal F(\cdot,v)\|_{L^\infty(U_8)}\bigr),
 \qquad 1<p<\infty,
\]
where $C_p$ depends only on $p$, the neighborhoods and the drift
bound; see~\cite{GT}. Thus $v\in C^{1,\gamma}_{\loc}(V)$ for
$0<\gamma<1$. Its continuous representative is nonnegative, and
interior bootstrapping gives $v\in C^\infty(V_+)$ on $V_+:=\{v>0\}$.

Fix $0<\theta<1$. By~\eqref{common:eq-nearlinear},
$|\mathcal F(X,t)|\le Ct^{1-\theta}$ on $\overline U_4\times[0,M]$.
For $A\in U_2$ and $0<r\le\rho$, put $k=r^{2/\theta}$ and $w=v+k$.
On $B_{2r}(A)\subset U_4$, it follows that
\[
 \Delta w+b\cdot\nabla w=c_rw,
 \qquad c_r(X)=\frac{\mathcal F(X,v(X))}{v(X)+k}.
\]
Moreover, the inequality
\[
 \frac{a^{1-\theta}}{a+k}
 =k^{-\theta}\frac{(a/k)^{1-\theta}}{1+a/k}
 \le k^{-\theta},\qquad a\ge0,
\]
gives $\|c_r\|_{L^\infty(B_{2r}(A))}\le Cr^{-2}$. Rescaling by $X=A+rY$ makes
the drift $rb(A+rY)$ and the potential $r^2c_r(A+rY)$ uniformly
bounded on $B_2$. Harnack's inequality~\cite{GT} therefore yields
\begin{equation}\label{common:eq-additive-harnack}
 \sup_{B_r(A)}(v+r^{2/\theta})
 \le C_H\inf_{B_r(A)}(v+r^{2/\theta}),
\end{equation}
with $C_H$ independent of $A$ and $r$.
At a zero $A\in Z \cap U_2$, this gives $\sup_{B_r(A)}v\le C_Hr^{2/\theta}$.

For $X\in K\setminus Z$, set $d:=\operatorname{dist}(X,Z)<\rho/4$,
choose a nearest zero $A\in\overline U_1\Subset V$ and take $r=2d$.
Since $\theta$ can be arbitrarily small, for every integer $N\ge0$, it follows that
\begin{equation}\label{common:eq-value-flat}
 v(X)\le C_N\operatorname{dist}(X,Z)^N,
 \qquad X\in K\setminus Z,\quad
 \operatorname{dist}(X,Z)<\rho/4.
\end{equation}

\step{2. Derivative estimates at the positive-value scale.}
At $X\in K$ with $t:=v(X)>0$, choose a sufficiently small fixed
$\kappa>0$, depending on $C_H$, and put $r=(\kappa t)^{\theta/2}$.
For small $t$, $r\le\rho$ and~\eqref{common:eq-additive-harnack} gives
\[
 c_*t\le v(Y)\le C_*t\quad\text{on }B_r(X),
\]
with fixed $c_*,C_*>0$. In particular, this ball lies in $V_+$ so $v$ is smooth on this ball. Next, set
\[
 u(Y)=\frac{v(X+rY)}t,\qquad \beta(Y)=rb(X+rY),\qquad
 G(Y,s)=\frac{r^2}{t}\mathcal F(X+rY,ts).
\]
Then $\Delta u+\beta\cdot\nabla u=G(Y,u)$ on $B_1$, with
$c_*\le u\le C_*$. On $I=[c_*/2,2C_*]$, for small $t$ such that
$2C_*t\le M$, the hypothesis with $\eta=\theta$ gives
\begin{align*}
 |D_Y^\alpha\partial_s^jG(Y,s)|
 &=r^{2+|\alpha|}t^{j-1}
   |D_X^\alpha\partial_t^j\mathcal F(X+rY,ts)|\\
 &\le C r^{2+|\alpha|}t^{-\theta}
  =C\kappa^\theta r^{|\alpha|}\le C.
\end{align*}
Here $s^{1-j-\theta}$ is uniformly bounded on $I$. Similarly,
$D^\alpha\beta=r^{1+|\alpha|}(D^\alpha b)(X+rY)$ is uniformly bounded.
Interior $W^{2,p}$ estimates and Schauder bootstrapping on $B_1$
therefore yield $\|u\|_{C^q(B_{1/2})}\le C_q$, independently of $X,t$.
Scaling back gives
\[
 |D^qv(X)|\le C_qtr^{-q}
 =C_q\kappa^{-q\theta/2}t^{1-q\theta/2}.
\]
Taking $\theta=1/(q+2)$, and restricting to $t<t_q<1$, proves
\begin{equation}\label{common:eq-derivative-small-value}
 |D^qv(X)|\le C_qv(X)^{1/2},
 \qquad X\in K,\quad 0<v(X)<t_q.
\end{equation}
The constants and threshold depend only on $q$ and the fixed data.

\step{3. Extension of the jets.}
The order-two bound in~\eqref{common:eq-value-flat} makes $v<t_q$
sufficiently near $Z$. Combining~\eqref{common:eq-derivative-small-value}
with~\eqref{common:eq-value-flat} at order $2N$ then gives
\eqref{common:eq-jets-flat} on $V_+$. We now show that these derivatives extend as the
classical derivatives of $v$ on $V$. Define
\[
 J_\alpha(X)=
 \begin{cases}
 D^\alpha v(X),&X\in V_+,\\
 0,&X\in Z.
 \end{cases}
\]
The estimate with $N=1$ makes $J_\alpha$ continuous. With $N=2$, near
any $X_0\in Z$ it gives
\[
 |J_\alpha(X)-J_\alpha(X_0)|
 \le C\operatorname{dist}(X,Z)^2\le C|X-X_0|^2.
\]
Thus $J_\alpha$ is differentiable at $X_0$ with zero differential.
On $V_+$, and hence also on $Z$, its partial derivatives are the
continuous functions $J_{\alpha+e_i}$. Starting from $J_0=v$,
induction proves $v\in C^\infty(V)$ and $D^\alpha v=J_\alpha$.
All derivatives vanish on $Z$.
\end{proof}

\begin{remark}
By smoothness for positive values, assumption~\eqref{common:eq-nearlinear}
can equivalently be required only for sufficiently small positive
$t$, with a threshold depending on $K,\alpha,j,\eta$.
On the other hand, if $|\mathcal F(X,t)|\le Ct$ near zero, locally uniformly in $X$, Harnack's inequality implies that a nonnegative solution on a
connected domain is either positive everywhere or identically zero.
The nonlinearities used below satisfy~\eqref{common:eq-nearlinear}
while permitting nontrivial solutions to vanish on open sets.
\end{remark}

\begin{remark}
Let $(b_\lambda,\mathcal F_\lambda,v_\lambda)$ be a family
satisfying the hypotheses of Theorem~\ref{common:lem-flat-regularity}
on the same open set $V$. Assume that, for every compact
$K\Subset V$ and integer $m\ge0$,
\[
 \sup_\lambda\bigl(
 \|v_\lambda\|_{L^\infty(K)}+\|b_\lambda\|_{C^m(K)}\bigr)<\infty,
\]
and that the constant in~\eqref{common:eq-nearlinear} can be
chosen independently of $\lambda$ for every fixed
$K,M,\alpha,j,\eta$. Then
\[
 \sup_\lambda\|v_\lambda\|_{C^q(K)}<\infty,
 \qquad K\Subset V,\quad q\ge0.
\]
The constants $C$ and $\delta$ in~\eqref{common:eq-jets-flat}
can also be chosen independently of $\lambda$, with $Z$ replaced
by the zero set $Z_\lambda$ of each member for which
$Z_\lambda\ne\emptyset$.

Indeed, the local estimate in part~1 gives a uniform $C^1$ bound
from the stated data, without a separate bound on the initial $H^1$
norms. All constants and small-value thresholds in parts~1--3 are then
uniform. For each fixed $q$, at points where $v_\lambda\ge t_q$, the
uniform gradient bound supplies a common ball on which
$v_\lambda\ge t_q/2$; ordinary interior regularity on this positive
value interval bounds all derivatives up to order $q$ uniformly.
Together with~\eqref{common:eq-derivative-small-value} and flatness
at zeros, this proves the uniform $C^q$ bound, also when the zero set
is empty. Finally, the uniform order-two value bound gives a common
distance threshold for $v_\lambda<t_q$, and the order-$2N$ value bound
then gives the asserted uniform constants in~\eqref{common:eq-jets-flat}.
\end{remark}

\subsection{A flat profile and exact boundary collars}\label{common:sec-collars}

We need equations that allow constant regions, but that at the same time satisfy the hypotheses of the regularity theorem we just proved. We construct them from a single
increasing profile $H$ that is flat at zero: expressing its derivatives
as functions of its value gives the required nonlinearities. Shifted
copies of $H$ will also provide barriers that force sufficiently small
solutions with zero boundary values to vanish in a whole boundary collar.
For $0<s<1$, set
\begin{equation}\label{common:eq-H}
 H(s):=e^{-L(s)^2},\qquad L(s):=\log(1/s),
\end{equation}
and extend it by zero for $s\le0$. Differentiation gives
$H^{(n)}(s)=s^{-n}H(s)P_n(L(s))$ with $P_n$ a polynomial.
The factor $e^{-L^2}$ dominates every power of $s^{-1}$ and $L$,
so this extension is smooth and flat at zero. The identities used
in the collar and spectral estimates are
\begin{align}
 \frac{H''(s)}{H'(s)}
 &=\frac{2L(s)-1-L(s)^{-1}}s,
       \label{common:eq-Hratio2}\\
 \frac{H'''(s)}{H'(s)}
 &=\frac{4L(s)^2-6L(s)-4+3/L(s)}{s^2}.
       \label{common:eq-Hratio3}
\end{align}
Note as well that, given $0<c<4$, we can fix $s_c=s_c(c)\in(0,1)$
so small that $H'$ and $H''$ are positive on $(0,s_c)$ and
\[
 c\,\frac{\log^2(1/s)}{s^2}
 \le\frac{H'''(s)}{H'(s)}
 \le C\,\frac{\log^2(1/s)}{s^2},
 \qquad 0<s<s_c.
\]

Writing $M(t)=\log(1/t)$, we have $H^{-1}(t)=e^{-\sqrt{M(t)}}$.
Moreover, setting $F=H''\circ H^{-1}$, we have
\[
 F(t)=t e^{2\sqrt{M(t)}}(4M(t)-2\sqrt{M(t)}-2) \quad \textup{and} \quad
 F'(t)\sim4M(t) e^{2\sqrt{M(t)}}\quad \textup{as } t\downarrow0.
\]
Its derivative is unbounded, but $e^{2\sqrt M}$ grows more slowly
than $e^{\eta M}$ for every $\eta>0$. Moreover, at the reference profile $t=H(d)$, it gives
$F'(H(d))\approx\log^2(1/d)/d^2$, where $d$ is distance to a
reference interface. This is precisely the behavior of the ``linearized operator'' we will have to handle. We also record the scale comparison
\begin{equation}\label{common:eq-inverse-scale}
 H^{-1}\bigl(2H(2\delta)\bigr)\le3\delta
 \qquad\textup{for }0<\delta\ll1.
\end{equation}
Indeed, with $\ell=\log(1/(2\delta))$, the ratio to $2\delta$ is
$\exp(\ell-\sqrt{\ell^2-\log2})\to1$ as $\delta \to 0$.

More generally, for $A,B\in C^\infty([0,s_c])$, the value law
\[
 F(t)=A(H^{-1}(t))H''(H^{-1}(t))
       +B(H^{-1}(t))H'(H^{-1}(t)),\qquad 0<t<H(s_c),
\]
extends continuously to $F(0)=0$ and satisfies, for every integer $j\ge0$ and
$\eta>0$,
\begin{equation}\label{common:eq-value-calculus}
 |F^{(j)}(t)|\le C_{j,\eta}t^{1-j-\eta}.
\end{equation}
These bounds are uniform when the derivatives of $A,B$ are uniformly
bounded. They also hold after differentiation in additional spatial
or scalar parameters with uniformly bounded mixed derivatives.
Indeed, it follows that $F(t)=tQ(M(t))$, where
\[
 Q(M)=A(e^{-\sqrt M})e^{2\sqrt M}(4M-2\sqrt M-2)
       +2B(e^{-\sqrt M})e^{\sqrt M}\sqrt M.
\]
Since $M\ge\log(1/H(s_c))>0$, the chain rule gives
$|Q^{(k)}(M)|\le C_ke^{2\sqrt M}(1+M)^{N_k}$ for every fixed $k$.
Thus, taking into account that $\partial_t=-t^{-1}\partial_M$, it follows that
\[
 F^{(j)}(t)=t^{1-j}Q_j(M(t)),\qquad
 Q_0=Q,\qquad Q_{j+1}=(1-j)Q_j-Q_j'.
\]
Hence, \eqref{common:eq-value-calculus} follows from
$e^{2\sqrt M}(1+M)^N\le C_{N,\eta}e^{\eta M}$.
Taking $j=0$ and $\eta<1$ proves continuity at zero. Parameter
derivatives replace $A,B$ by their derivatives in this same calculation.

We next use a translated and rescaled copy of $H$ to force exact
vanishing near a prescribed boundary circle. Fix $X_c\in\RR^2$ and $r_0>0$, and write
\[
 \Gamma=\{X\in\RR^2:|X-X_c|=r_0\},\qquad
 r(X)=|X-X_c|,\qquad e_r(X)=\frac{X-X_c}{r(X)}.
\]
Choose $\epsilon\in\{-1,1\}$ and set $s(X)=\epsilon(r(X)-r_0)$.
The side $s>0$ is the side on which the equation is considered.
At a boundary component of an annulus $\{r_- < r < r_+\}$ we choose this side inward:
$s=r-r_-$ at the inner boundary and $s=r_+-r$ at the outer
boundary. Thus, $s$ increases as we move from either boundary into
the domain. 

For $0<h\le s_*<r_0$, define
\[
 \mathcal C_h=\{X:0<s(X)<h\},\qquad
 \widetilde{\mathcal C}_h=\{X:|s(X)|<h\},\qquad
 \Gamma_t=\{X:s(X)=t\},\quad |t|\le s_*.
\]
The one-sided collar $\mathcal C_h$ is the open annulus between
$\Gamma_0=\Gamma$ and $\Gamma_h$; the circle $\Gamma_t$ has radius
$r_0+\epsilon t$. The two-sided collar
$\widetilde{\mathcal C}_h$ includes both sides of $\Gamma$.
These collars are taken around prescribed circles, independently
of the support of a solution. When both boundary components of an
annulus are considered, their collars are chosen disjoint.

Since $s_*<r_0$, the coordinate $s$ is smooth on the closed collars
and satisfies
\[
 \nabla s=\epsilon e_r,\qquad |\nabla s|=1,\qquad
 \Delta s=\frac{\epsilon}{r}.
\]
For $\mathcal L=\Delta+b\cdot\nabla$, with $b$ smooth up to the
boundary of the collar, the chain rule therefore gives
\[
 \mathcal L(w\circ s)=w''(s)+\beta(X)w'(s),\qquad
 \beta(X)=\epsilon\left(\frac1r+b(X)\cdot e_r\right).
\]
On $\mathcal C_{s_*}$, both $|\beta|$ and $|b|$ are bounded by
any constant $B$ satisfying
\[
 B\ge\frac1{r_0-s_*}
       +\|b\|_{L^\infty(\mathcal C_{s_*};\RR^2)}.
\]
The same bound holds on $\widetilde{\mathcal C}_{s_*}$ when $b$
is defined there, with its norm taken on that collar. In either
case, $B$ can be chosen uniformly for families if $r_0-s_*$ has
a positive lower bound and the drifts are uniformly bounded.
We abbreviate $w\circ s$ by $w(s)$. Neither $\beta$ nor the
solutions considered below are assumed to be radial.

The lemma below concerns exact vanishing at the artificial boundary of $D$. The linear
barriers in Section~\ref{common:sec-compact} will instead give smallness
near the reference interfaces of $\Omega_\tau$. Neither argument
describes the actual free boundary.

\begin{lemma}
\label{common:lem-exact-collar}
Let $\mathcal C_{s_*}$ be the one-sided collar defined above, and
suppose that
\[
 \mathcal L=\mu^{-1}\Div(\mu\nabla)
           =\Delta+\nabla\log\mu\cdot\nabla,
\]
where $\mu$ is smooth and strictly positive on
$\overline{\mathcal C}_{s_*}$. Let $B\ge0$ satisfy
$|\mathcal Ls|=|\beta|\le B$ on this collar. Given $t_0>0$, let
\[
 A\in C(\overline{\mathcal C}_{s_*}\times[0,t_0]),\qquad
 A(X,0)=0,
\]
and assume that $t\mapsto A(X,t)$ is nondecreasing for each $X$.
Moreover, suppose that there are $c_0>0$ and $\sigma_0\in(0,s_c)$ such that
\begin{equation}\label{common:eq-exact-collar-absorption}
 H(\sigma)<t_0,\qquad
 A(X,H(\sigma))\ge c_0H''(\sigma),
 \qquad 0<\sigma<\sigma_0,\quad
 X\in\overline{\mathcal C}_{s_*}.
\end{equation}
Then there exist $\delta\in(0,s_*/2)$ and $\eta_0\in(0,t_0)$
such that every nonnegative weak solution
$v\in H^1(\mathcal C_{2\delta})\cap L^\infty(\mathcal C_{2\delta})$
to
\[
 -\mathcal L v+A(X,v)=0 \quad\text{in }\mathcal C_{2\delta}, 
 \qquad v=0 \quad\text{on }\Gamma_0, \qquad \textup{\textit{with} }\ \|v\|_{L^\infty(\mathcal C_{2\delta})}\le\eta_0,
\]
satisfies
$v=0$ almost everywhere in $\mathcal C_\delta$.
\end{lemma}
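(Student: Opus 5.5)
We will argue by comparison with translated, rescaled copies of the flat profile $H$, which act as supersolutions. For a parameter $\sigma\in(0,\delta]$ we consider the barrier
\[
 W_a(X)=\lambda H\bigl(s(X)-a\bigr),
\]
with $\lambda\ge1$ a fixed amplification factor, and a shift $a\in[0,\delta]$. Then $W_a$ vanishes identically on $\{s\le a\}$, and on $\{s>a\}$ we have $\mathcal L W_a=\lambda(H''+\beta H')$. The key inequality is that $W_a$ is a weak supersolution, $-\mathcal L W_a+A(X,W_a)\ge0$, on $\mathcal C_{2\delta}$. We will first try $\lambda=1$, which avoids having to compare $A(X,\lambda H)$ with $A(X,H)$. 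Where $W_a>0$, the monotonicity of $A$ and \eqref{common:eq-exact-collar-absorption} give $A(X,H(\sigma))\ge c_0H''(\sigma)$ with $\sigma=s-a<2\delta<\sigma_0$. It therefore suffices that
\[
 c_0H''(\sigma)\ge H''(\sigma)+BH'(\sigma).
\]
This fails if $c_0<1$, so we instead compare with $H(\sigma/\kappa)$ for a large $\kappa$, or equivalently use $\lambda$-scaling in the argument. Indeed $w(\sigma)=H(\kappa\sigma)$ has $w''=\kappa^2H''$ and $w'=\kappa H'$. The values are unchanged, so $A(X,w)\ge c_0H''(\kappa\sigma)=c_0\kappa^{-2}w''$. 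The rescaling $\sigma/\kappa$ with $\kappa>1$ reverses this: $w=H(\sigma/\kappa)$ gives $w''=\kappa^{-2}H''(\sigma/\kappa)$, so $A\ge c_0H''=c_0\kappa^2w''$. Choosing $\kappa^2c_0\ge2$ yields
\[
 A\ge 2w''\ge w''+\beta w'+\bigl(w''-Bw'\bigr).
\]
The last bracket is nonnegative once $H''/H'(\sigma/\kappa)\ge \kappa B$. By \eqref{common:eq-Hratio2} this ratio is about $2L/(\sigma/\kappa)\to\infty$, so it holds for $\sigma\le 2\delta$ with $\delta$ small. Since $w$ and $w'$ are flat at $\sigma=0$, the glued function $W_a=w(s-a)$ is $C^1$ (indeed smooth), so it is a genuine weak supersolution across $\{s=a\}$ as well.

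Next we fix $\eta_0:=H(\delta/\kappa)$, the value of $W_0$ on the far circle $\Gamma_{2\delta}$ shifted by $\delta$. Precisely, $W_a=H((2\delta-a)/\kappa)\ge H(\delta/\kappa)$ on $\Gamma_{2\delta}$ for every $a\in[0,\delta]$. On $\Gamma_0$ we have $v=0\le W_a$. On $\Gamma_{2\delta}$ we have $v\le\eta_0\le W_a$, understood in the trace sense. The bound $\|v\|_{L^\infty}\le\eta_0$ is only interior, so the comparison on the outer boundary requires care. The cleanest fix is to run the comparison on $\mathcal C_{2\delta-\epsilon'}$ and let $\epsilon'\to0$; alternatively, we simply use the interior bound, since $v\le\eta_0$ a.e. and $W_a>\eta_0$ strictly near $\Gamma_{2\delta}$ when $a<\delta$.

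The comparison principle now follows from monotonicity of $A$ in $t$. We test the difference of the weak equations, multiplied by $\mu$, against $(v-W_a)^+\in H^1_0$. Here $v$ is a subsolution (equality) and $W_a$ is a supersolution, so
\[
 \int\mu|\nabla(v-W_a)^+|^2+\int\mu\bigl(A(X,v)-A(X,W_a)\bigr)(v-W_a)^+\le0,
\]
and both terms are nonnegative. Hence $v\le W_a$ a.e. Taking $a=\delta$ gives $v\le W_\delta=0$ on $\mathcal C_\delta$, which is the claim. A single shift suffices, so no sliding argument is needed.

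The main obstacle is the supersolution inequality near the gluing circle. There $H'$ and $H''$ are tiny, and the drift term $\beta H'$ must be absorbed. This is where the logarithmic blow-up of $H''/H'$ from \eqref{common:eq-Hratio2} is used, together with the absorption hypothesis \eqref{common:eq-exact-collar-absorption} applied with the correct rescaling $\kappa$. We also need $H''>0$ for $\sigma<s_c$, which forces $2\delta/\kappa<s_c$ and $2\delta<\sigma_0$.
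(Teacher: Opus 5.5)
Your proposal is correct and is essentially the paper's proof. After the exploratory detours, your barrier $H\bigl((s-\delta)/\kappa\bigr)$, with $\kappa^2c_0\ge2$ and $H''/H'\ge\kappa B$ on the relevant range, is exactly the paper's $W=H(\kappa(s-\delta))$ with $\kappa<\sqrt{c_0/2}$ in reciprocal parametrization; you use the same $\eta_0$, the same test function $(v-W)_+$, and the same monotonicity of $A$. Your worry about $\Gamma_{2\delta}$ is unnecessary: the a.e.\ bound $v\le\eta_0$ passes to the trace, so $(v-W)_+\in H_0^1(\mathcal C_{2\delta})$ holds directly even with the shift $a=\delta$.
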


\begin{proof}
First of all, we choose $0<\kappa<\min\{1,\sqrt{c_0/2}\}$. Then, since
$H''(\sigma)/H'(\sigma)\to\infty$ as $\sigma \downarrow 0$ by~\eqref{common:eq-Hratio2}, there
exists $\sigma_1\in(0,\sigma_0)$ such that
\[
 \frac{H''(\sigma)}{H'(\sigma)}\ge\frac{2B\kappa}{c_0},
 \qquad 0<\sigma\le\sigma_1.
\]
The positivity of $H'$ and $H''$ on this interval gives
\[
 (c_0-\kappa^2)H''(\sigma)-B\kappa H'(\sigma)\ge0,
 \qquad 0<\sigma\le\sigma_1.
\]
Having this inequality at hand, we fix
\[
 \delta=\frac14\min\{s_*,\sigma_1/\kappa\},\qquad
 \eta_0=H(\kappa\delta),\qquad
 W(X)=H\bigl(\kappa(s(X)-\delta)\bigr).
\]
Then $0<\delta<s_*/2$ and $0<\kappa\delta<\sigma_0$, so
$0<\eta_0<t_0$. Moreover, on $\overline{\mathcal C}_{2\delta}$ we have
$0\le W\le\eta_0$, with $W=0$ for $s\le\delta$ and
$W=\eta_0$ on $\Gamma_{2\delta}$. 

Next, for $\delta<s(X)<2\delta$, we set
$\sigma(X)=\kappa(s(X)-\delta)$. The chain rule and
\eqref{common:eq-exact-collar-absorption} yield
\begin{align*}
 -\mathcal LW+A(X,W)
 &=-\kappa^2H''(\sigma)-\kappa\beta(X)H'(\sigma)
                         +A(X,H(\sigma))\ge(c_0-\kappa^2)H''(\sigma)-B\kappa H'(\sigma)\ge0.
\end{align*}
The zero extension of $H$ is smooth and flat at zero, so $W$ is
smooth across $\Gamma_\delta$. Likewise, for $s\le\delta$, its derivatives
and $A(X,W)=A(X,0)$ vanish. Hence, $W$ is a supersolution on the
whole collar $\mathcal{C}_{2\delta}$, with no interface term at $\Gamma_\delta$.

Now, we set $q=(v-W)_+$ and observe that $q \in H_0^1(\mathcal C_{2\delta})$. It is then immediate to check that
\[
 \int_{\mathcal C_{2\delta}}\mu|\nabla q|^2\dd X
 +\int_{\mathcal C_{2\delta}}\mu[A(X,v)-A(X,W)]q\dd X\le0.
\]
Since both integrands are nonnegative,  it follows that
$\nabla q=0$ almost everywhere, and Poincar\'e's inequality
for $q\in H_0^1(\mathcal C_{2\delta})$ gives $q=0$ almost everywhere.
Consequently, $0\le v\le W$ almost everywhere, which proves
that $v=0$ almost everywhere in $\mathcal C_\delta$.
\end{proof}

\begin{remark}
The constants $\delta$ and $\eta_0$ can be chosen uniformly for families with the same collar, $B$, $c_0$, $\sigma_0$ and $t_0$. Indeed,  $\kappa$ depends only on $c_0$, and $\sigma_1$ can be
chosen using only $B,c_0,\sigma_0$ and the fixed profile $H$.
The choices of $\delta$ and $\eta_0$ are therefore uniform under
the stated family assumptions. No common bound for $\mu$ or
$\mu^{-1}$ is needed for these choices: their boundedness for
each individual member suffices to justify the comparison.
\end{remark}

\section{A compact formulation on a fixed domain}\label{common:sec-compact}

We will need to perturb the reference profiles while allowing their
constant regions to change. The scalar nonlinearities we will consider have unbounded derivatives at the plateau values, so their usual linearization is not a bounded operator on continuous perturbations. We split them into a
strongly increasing part $g$ and a smooth part $k$, and write
\[
 -\mathcal L\psi+g(X,\psi)=k(X,\psi),\qquad
 \psi=\mathcal T[k(X,\psi)].
\]
For a prescribed right-hand side, monotonicity of $g$ gives a unique solution, while elliptic estimates make the inverse $\mathcal T$ compact. This turns
the perturbation problem into a fixed-point equation with a compact operator on a domain larger than the active annulus. 

In this section, we develop the necessary machinery to handle this fixed point equation. We first construct $\mathcal T$, then the linear inverse that describes its first-order response. The large slopes near the plateau values force this response to vanish on the constant regions, leaving a singular linear equation on the reference annulus. The final result identifies this limit without requiring a $C^1$ inverse on an open neighborhood of sources.

Given a bounded smooth domain $D\subset\RR^2$, we write
$C_0(\overline D)$ for the continuous functions on $\overline D$
which vanish on $\partial D$.

\subsection{The nonlinear inverse}

To obtain the monotone equation, we add a smooth correction $k$ to
the scalar nonlinearity $N$ and move the same correction to the
right-hand side. In both applications, $\partial_tN\ge1$ already
holds near the plateau values, so the correction is needed only
away from them. Choose $0\le\xi\le1$, equal to one near each
plateau value and supported where $\partial_tN\ge1$, and choose
$C_g\ge1$ with $C_g\ge\partial_tN$ on the support of $1-\xi$.
On the physical value range, set
\begin{equation}\label{common:eq-monotone-splitting}
 k(X,t)=\int_0^t(1-\xi(s))[C_g-\partial_sN(X,s)]\dd s,
 \qquad g=N+k.
\end{equation}
The integrand is smooth because it vanishes near the singular
values. Thus $k$ is smooth, nondecreasing and constant near each
plateau, while
\[
 \partial_tg=\xi\partial_tN+(1-\xi)C_g\ge1.
\]
Only the extensions outside the physical value range differ between
the two applications. We now construct the inverse for general
continuous strongly monotone nonlinearities.

Fix boundary data $h\in C^\infty(\overline D)$ and a compact metric
parameter space $K$. For $\tau\in K$, let
$\mu_\tau\in C^\infty(\overline D)$ be positive, and set
\[
 \mathcal L_\tau\coloneqq
 \mu_\tau^{-1}\operatorname{div}(\mu_\tau\nabla)
 =\Delta+\nabla\log\mu_\tau\cdot\nabla.
\]
We assume that $\tau\mapsto\mu_\tau$ is continuous in every
$C^m(\overline D)$ norm and that the weights and their reciprocals have
uniform bounds in these norms. In particular, there are constants
$0<c_\mu\le C_\mu<\infty$ such that
\[
 c_\mu\le\mu_\tau(X)\le C_\mu,
 \qquad X\in\overline D,\ \tau\in K.
\]

Let $g_\tau\in C(\overline D\times\RR)$. Parameter continuity means
that, whenever $\tau_j\to\tau$ in $K$, one has
\[
 \sup_{\substack{X\in\overline D\\|t|\le M}}
 |g_{\tau_j}(X,t)-g_\tau(X,t)|\longrightarrow0
 \qquad\text{for every }M<\infty.
\]
Assume as well that, for every $X\in\overline D$, $\tau\in K$ and $s,t\in\RR$, 
\[
 g_\tau(X,0)=0,\qquad
 [g_\tau(X,s)-g_\tau(X,t)](s-t)\ge|s-t|^2,
 \qquad |g_\tau(X,t)|\le C_g(1+|t|),
\]
where $C_g$ is independent of $\tau$. 

\begin{proposition}\label{common:prop-monotone-inverse}
For every $\tau\in K$ and $F\in L^\infty(D)$, the problem
\begin{equation*}
 -\mathcal L_\tau u+g_\tau(X,u)=F,
 \qquad u-h\in H_0^1(D),
\end{equation*}
has a unique weak solution, denoted by $\mathcal T_\tau F$. It belongs to
$W^{2,p}(D)$ for every finite $p$, and
\begin{align}
 \norm{\mathcal T_\tau F}_{L^\infty(D)}
 &\le\max\{\norm h_{C(\partial D)},\norm F_{L^\infty(D)}\},
       \label{common:eq-T-bound}\\
 \norm{\mathcal T_\tau F-\mathcal T_\tau G}_{L^\infty(D)}
 &\le\norm{F-G}_{L^\infty(D)}.\label{common:eq-T-Lipschitz}
\end{align}
The family $\{\mathcal{T}_\tau\}_{\tau \in K}$ is collectively compact from bounded subsets of $L^\infty(D)$ into $C(\overline D)$, and the map
$(\tau,F)\mapsto\mathcal T_\tau F$ is continuous from
$K\times L^\infty(D)$ to $C(\overline D)$.  

If, in addition, there are constants $m\le M$ satisfying $m\le h\le M$ on $\partial D$ and
\[
 g_\tau(X,m)\le F(X)\le g_\tau(X,M)
 \quad\text{almost everywhere in }D,
\]
then $m\le\mathcal T_\tau F\le M$. In particular,
$\mathcal T_\tau F\ge0$ when $F\ge0$ and the boundary data are nonnegative.

The inverse preserves every orthogonal symmetry shared by the domain,
weight, boundary data, nonlinearity and source.
\end{proposition}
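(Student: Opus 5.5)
Existence and uniqueness will come from monotone operator theory. After subtracting the boundary data, $w=u-h\in H_0^1(D)$, the problem becomes finding a zero of
\[
 \langle\mathcal A w,\varphi\rangle=\int_D\mu_\tau\nabla(w+h)\cdot\nabla\varphi\dd X+\int_D\mu_\tau\bigl[g_\tau(X,w+h)-F\bigr]\varphi\dd X .
\]
Here we use the weak formulation weighted by $\mu_\tau$, which is the natural divergence form of $-\mathcal L_\tau$. The linear growth of $g_\tau$ makes $\mathcal A$ well defined and hemicontinuous on $H_0^1(D)$. Strong monotonicity follows from $c_\mu>0$ and $[g_\tau(X,s)-g_\tau(X,t)](s-t)\ge|s-t|^2$, which together give
\[
 \langle\mathcal Aw_1-\mathcal Aw_2,w_1-w_2\rangle\ge c_\mu\|\nabla(w_1-w_2)\|_{L^2}^2 .
\]
Minty--Browder then yields a unique weak solution.

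Next come the comparison estimates. I would establish \eqref{common:eq-T-bound}, \eqref{common:eq-T-Lipschitz} and the order bounds $m\le\mathcal T_\tau F\le M$ by a single weak comparison principle. Suppose $u_1,u_2$ solve with sources $F_1,F_2$ and boundary data satisfying $u_1\le u_2+k$ on $\partial D$, where $k\ge0$ is a constant. Test with $(u_1-u_2-k)_+$ and use $g_\tau(X,u_2+k)-g_\tau(X,u_2)\ge k$. This shows $u_1\le u_2+k$ whenever $F_1-F_2\le k$ almost everywhere. Taking $k=\|F-G\|_\infty$ with the same boundary data gives \eqref{common:eq-T-Lipschitz}. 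For the order bounds, the constants $m$ and $M$ are themselves sub- and supersolutions under the stated inequalities, since $\mathcal L_\tau$ annihilates constants, and testing with $(u-M)_+$ and $(m-u)_+$ gives the claim. Nonnegativity follows with $m=0$ because $g_\tau(X,0)=0$. The bound \eqref{common:eq-T-bound} follows similarly. Set $K_0=\max\{\|h\|,\|F\|_\infty\}$. On $\{u>K_0\}$ we have $g_\tau(X,u)\ge u>K_0\ge F$, so testing with $(u-K_0)_+$ forces it to vanish; the lower bound is symmetric.

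For regularity and compactness, the nonlinearity is controlled once $u$ is bounded: with $u$ bounded by \eqref{common:eq-T-bound}, $g_\tau(X,u)-F$ is bounded in $L^\infty$. Global $W^{2,p}$ estimates for $\mathcal L_\tau$ with smooth boundary data (see \cite{GT}) then bound $\|\mathcal T_\tau F\|_{W^{2,p}(D)}$ uniformly in $\tau\in K$ and in $F$ on bounded sets. This uses the uniform $C^m$ bounds on $\mu_\tau$, $\mu_\tau^{-1}$ and $|g_\tau|\le C_g(1+|t|)$. Morrey's embedding into $C^{1,\gamma}(\overline D)$ then gives collective compactness into $C(\overline D)$.

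Joint continuity is the step I expect to be the main obstacle. The $F$-dependence is handled by \eqref{common:eq-T-Lipschitz}, so it suffices to fix $F$ and let $\tau_j\to\tau$. By compactness, a subsequence of $u_j=\mathcal T_{\tau_j}F$ converges in $C(\overline D)$ and weakly in $W^{2,p}$ to some $u$. We pass to the limit in the weak formulation using uniform convergence of $g_{\tau_j}$ on $\overline D\times[-M,M]$ and $C^1$ convergence of $\mu_{\tau_j}$. Then $u$ solves the limit problem, and uniqueness forces $u=\mathcal T_\tau F$, so the whole sequence converges. Finally, symmetry also follows from uniqueness. If $R$ is an orthogonal map preserving all the data, then $u\circ R$ solves the same problem, because $\Delta$ commutes with $R$ and $\nabla\log\mu_\tau$ transforms covariantly. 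Hence $u\circ R=u$.
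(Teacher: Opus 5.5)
Your proposal is correct and follows essentially the paper's route: a weak comparison argument (testing with $(u_1-u_2-k)_+$, with constants as barriers) gives uniqueness, the $L^\infty$ and Lipschitz bounds and the order bounds; uniform $W^{2,p}$ estimates give collective compactness; and uniqueness gives both continuity and symmetry. The differences are minor. You obtain existence from Minty--Browder, whereas the paper minimizes the convex functional $\int_D\mu_\tau\bigl(\tfrac12|\nabla u|^2+\mathcal G_\tau(X,u)-Fu\bigr)\dd X$. You also use the $\tau$-uniform Lipschitz bound to reduce joint continuity to continuity in $\tau$ alone, instead of passing to the limit along pairs $(\tau_j,F_j)$.
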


\begin{proof}
Let $\mathcal G_\tau(X,t)=\int_0^t g_\tau(X,s)\dd s$.
Strong monotonicity and linear growth make
\[
 \mathcal J_{\tau,F}(u)=\int_D\mu_\tau
 \bigl(\tfrac12|\nabla u|^2+\mathcal G_\tau(X,u)-Fu\bigr)\dd X,
 \qquad u\in h+H_0^1(D),
\]
coercive, weakly lower semicontinuous and strictly convex. Its unique
minimizer satisfies
\begin{equation}\label{common:eq-T-weak-formulation}
 \int_D\mu_\tau\nabla u\cdot\nabla\varphi\dd X
 +\int_D\mu_\tau g_\tau(X,u)\varphi\dd X
 =\int_D\mu_\tau F\varphi\dd X, 
 \qquad\forall\ \varphi\in H_0^1(D).
\end{equation}

For comparison, let $v_1,v_2$ be a subsolution and a supersolution
with ordered sources and ordered boundary traces. Then
$w=(v_1-v_2)_+\in H_0^1(D)$, and subtraction gives
\[
 \int_D\mu_\tau|\nabla w|^2\dd X
 +\int_D\mu_\tau[g_\tau(X,v_1)-g_\tau(X,v_2)]w\dd X\le0.
\]
The second term is at least $\int_D\mu_\tau w^2$, so $w=0$.
This proves uniqueness and the range bound by comparison with $m,M$.
In particular, the constants $\pm B$, with
$B=\max\{\norm h_{C(\partial D)},\norm F_\infty\}$, are barriers
because $g_\tau(X,B)\ge B$ and $g_\tau(X,-B)\le-B$.
They give~\eqref{common:eq-T-bound}; zero is a subsolution for
nonnegative data. If $a=\norm{F-G}_\infty$ and
$v=\mathcal T_\tau G$, strong monotonicity makes $v+a$ a
supersolution for source $F$, with dominating boundary trace.
Interchanging $F,G$ proves~\eqref{common:eq-T-Lipschitz}.

For bounded source sets, comparison bounds $u$ and $g_\tau(X,u)$,
and minimality against $h$ bounds $\|u\|_{H^1(D)}$, uniformly in
$\tau$. Global Dirichlet estimates for
$-\Delta u=F-g_\tau(X,u)+\nabla\log\mu_\tau\cdot\nabla u$
then give uniform $W^{2,p}(D)$ bounds for every finite $p$ by
two-dimensional bootstrapping; see~\cite{GT}. The constants depend
only on $D,h$, the source bound and the uniform coefficient bounds. Then, the compact embedding $W^{2,p}(D) \hookrightarrow C(\overline{D})$, $p > 2$, gives collective compactness into $C(\overline D)$.

If $\tau_j\to\tau$ and $F_j\to F$ in $L^\infty(D)$, these bounds,
convergence of the weights and uniform convergence of $g_{\tau_j}$ on
bounded value intervals let us pass to~\eqref{common:eq-T-weak-formulation}.
Uniqueness identifies every subsequential limit with $\mathcal T_\tau F$,
proving joint continuity. The same uniqueness applied after an
orthogonal change of variables proves the symmetry assertion.
\end{proof}

\subsection{The singular linear inverse}

The first-order perturbation is expected to vanish on the reference
plateaus, so its equation is posed on the active annulus. Near its
boundary, the derivative of $g$ along the profile grows like
$H'''/H'\approx\log^2(1/d)/d^2$, by~\eqref{common:eq-Hratio3},
where $d$ is distance to that boundary. We construct the inverse of
$-\Delta+V$ for potentials $V$ with this growth. The natural solution
space requires both the Dirichlet energy and $\int Vv^2$ to be finite.
We first justify approximation in this space, then use comparison to
control the solutions near the singular boundary.

The following density result will be used both on annuli and
in the one-dimensional problems arising from the radial
spectral analysis. Let $\Omega$ be either a bounded open
interval in $\RR$ or a circular annulus in $\RR^2$, and set
$d=\operatorname{dist}(\cdot,\partial\Omega)$. Fix a smooth positive
weight $W\ge1$ in $\Omega$ such that
\begin{equation}\label{common:eq-weight}
 W\approx1+\frac{\log^2(1/d)}{d^2}
 \quad\hbox{near }\partial\Omega,
\end{equation}
and define
\begin{equation*}
 \mathcal E(\Omega,W)=H_0^1(\Omega)\cap L^2(\Omega,W\dd x),
 \qquad
 \norm v_{\mathcal E}^2=\int_\Omega(|\nabla v|^2+Wv^2)\dd x.
\end{equation*}
In the next lemma, whose proof is elementary, we gather some properties of this space. 

\begin{lemma}\label{common:lem-density}
The space $\mathcal E(\Omega,W)$ is Hilbert,
$C_c^\infty(\Omega)$ is dense in it, and its inclusion into
$L^2(\Omega)$ is compact. These conclusions remain valid when the
integrals are taken against a smooth density bounded above and below
by positive constants.
\end{lemma}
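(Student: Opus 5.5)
We prove the three assertions in turn: Hilbert structure, density of $C_c^\infty(\Omega)$, and compactness of the inclusion into $L^2(\Omega)$.

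\textbf{Hilbert structure.} Completeness is standard. If $(v_j)$ is Cauchy for $\norm{\cdot}_{\mathcal E}$, then it converges in $H_0^1(\Omega)$ to some $v$, and it also converges in $L^2(\Omega,W\dd x)$ to some $\tilde v$. Since $W\ge1$, both convergences imply convergence in $L^2(\Omega)$, so $v=\tilde v$ almost everywhere. Hence $v\in\mathcal E$ and $v_j\to v$ in the $\mathcal E$ norm.

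\textbf{Density.} This is the only step with content, and I would argue in two stages.

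First, truncate in value and reduce to bounded functions with bounded support. Replace $v$ by $T_M v=\max\{-M,\min\{v,M\}\}$. Dominated convergence gives $T_Mv\to v$ in $\mathcal E$, since both $|\nabla T_Mv|\le|\nabla v|$ and $W|T_Mv|^2\le Wv^2$ are controlled pointwise.

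Second, cut off near the boundary. Multiply by $\chi_\epsilon(d)$, where $\chi_\epsilon$ vanishes for $d<\epsilon^2$, equals one for $d>\epsilon$, and is the logarithmic cutoff
\[
 \chi_\epsilon(d)=\frac{\log(d/\epsilon^2)}{\log(1/\epsilon)}
\]
in between. The weighted term $W\chi_\epsilon^2v^2\to Wv^2$ converges by dominated convergence. The difficult term is $\int|\chi_\epsilon'(d)|^2v^2$. On the transition layer, $|\chi_\epsilon'|\le 1/(d\log(1/\epsilon))$. There $\log(1/d)\ge\log(1/\epsilon)$, so
\[
 |\chi_\epsilon'|^2\le \frac{1}{d^2\log^2(1/\epsilon)}\le\frac{\log^2(1/d)}{d^2\log^4(1/\epsilon)}\lesssim W.
\]
Therefore
\[
 \int|\chi_\epsilon'|^2v^2\lesssim\int_{\{d<\epsilon\}}Wv^2\to0.
\]
In fact a plain linear cutoff on $\{\epsilon<d<2\epsilon\}$ already suffices, because $|\chi'|^2\lesssim \epsilon^{-2}\lesssim W$ there, by \eqref{common:eq-weight}. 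Either way the cutoffs converge in $\mathcal E$. The resulting functions are $H^1$ with compact support in $\Omega$, and on their support $W$ is bounded. Standard mollification then converges in both $H^1$ and in the weighted $L^2$ norm, which finishes density.

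\textbf{Compactness.} The inclusion $\mathcal E\hookrightarrow H_0^1(\Omega)$ is continuous, and $H_0^1(\Omega)\hookrightarrow L^2(\Omega)$ is compact by Rellich, since $\Omega$ is a bounded interval or annulus. So the inclusion into $L^2(\Omega)$ is compact.

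\textbf{Smooth densities.} If all integrals are taken against a smooth density $\rho$ with $0<c\le\rho\le C$, the resulting norms are equivalent to the original ones. Completeness, density and compactness are all invariant under equivalent norms, so the same conclusions hold.

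The only genuine point is the cutoff estimate near $\partial\Omega$. It works because $W\gtrsim d^{-2}$ dominates the squared gradient of the cutoff, so the boundary layer error is controlled by the tail of $\int Wv^2$.
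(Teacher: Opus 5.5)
Your proposal is correct and follows the same route as the paper. Completeness comes from the $H_0^1$ and weighted $L^2$ limits agreeing because $W\ge1$. Density comes from a boundary cutoff whose squared gradient is dominated by $W\gtrsim d^{-2}$, followed by mollification; the paper uses exactly your linear cutoff on $\{\delta<d<2\delta\}$. Compactness is Rellich's theorem, and equivalence of norms handles the weighted densities. Your value truncation and logarithmic cutoff are harmless but not needed.
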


\begin{proof}
Completeness follows by taking the limits of a Cauchy sequence in
$H_0^1(\Omega)$ and $L^2(\Omega,W\dd x)$: they agree in $L^2(\Omega)$
since $W\ge1$. Compactness follows from Rellich's theorem.
For density, fix $v\in\mathcal E$ and choose smooth cutoffs
$\chi_\delta$ that vanish for $d\le\delta$, equal one for
$d\ge2\delta$, and satisfy $|\nabla\chi_\delta|\le C/\delta$.
Since $W\ge c/d^2$ near the boundary,
\[
 \|v-\chi_\delta v\|_{\mathcal E}^2
 \le C\int_{\{x\in\Omega:d(x)<2\delta\}}
          (|\nabla v|^2+W|v|^2)\dd x
 \longrightarrow0.
\]
For fixed $\delta$, mollification gives smooth compactly
supported approximants in the same norm, since $W$ is
bounded on compact subsets of $\Omega$.
A density bounded above and below by positive constants
gives equivalent norms, so the conclusions remain valid.
\end{proof}

To control the singular inverse at its boundary, we use the collar
coordinates of Subsection~\ref{common:sec-collars}. For
$\mathcal L=\Delta+b\cdot\nabla$, $0<2\delta<s_*$ and
$M\ge\max\{1,4B^2\}$, put $\kappa=\sqrt M/2$. The inequalities
\[
 (-\mathcal L+M)e^{\pm\kappa s}
 =(M-\kappa^2\mp\kappa\beta)e^{\pm\kappa s}
 \ge\tfrac M2e^{\pm\kappa s}
\]
show that, for $S,C_0\ge0$, the functions
\begin{align}
 W_2(s)&=\frac SM+C_0e^{-\kappa(s+2\delta)}
                       +C_0e^{-\kappa(2\delta-s)},\notag\\
 W_1(s)&=\frac SM+C_0e^{-\kappa(2\delta-s)},
 \label{common:eq-collar-barriers}
\end{align}
are supersolutions to $(-\mathcal L+M)W\ge S$ on the two-sided and
one-sided collars, respectively.

Fix $i\in\{1,2\}$ and let $\mathcal Q=\mathcal C_{2\delta}$
if $i=1$, and $\mathcal Q=\widetilde{\mathcal C}_{2\delta}$
if $i=2$. Suppose that
$\mathcal L=\mu^{-1}\Div(\mu\nabla)$, with $\mu$ smooth and
strictly positive on $\overline{\mathcal Q}$, and that
$v\in H^1(\mathcal Q)$ satisfies
\[
 -\mathcal Lv+cv=\eta,\qquad\text{in }\mathcal Q,
\]
where $\eta\in L^\infty(\mathcal Q)$, $|\eta|\le S$, and
$c\ge M$ wherever $v\ne0$. Setting $c=M$ on $\{v=0\}$,
we may assume $c\ge M$ throughout $\mathcal Q$.
Assume $cv\in L^2(\mathcal Q)$, or, more generally,
$\int_{\mathcal Q}c|v|^2\dd X<\infty$ with the equation
understood against all $\varphi\in H_0^1(\mathcal Q)$
such that $\int_{\mathcal Q}c|\varphi|^2\dd X<\infty$.

In the two-sided case, assume $|v|\le C_0$ on
$\Gamma_{-2\delta}\cup\Gamma_{2\delta}$.
In the one-sided case, assume $v=0$ on $\Gamma_0$ and
$|v|\le C_0$ on $\Gamma_{2\delta}$, all in the trace sense.
The boundary inequalities above give
$q=(v-W_i)_+\in H_0^1(\mathcal Q)$.
This test is admissible also in the finite-energy case,
since $0\le q\le v_+$.
On $\{q>0\}$, we have $cv-MW_i\ge Mq$.
Testing the difference of the equation and the supersolutions
therefore gives
\[
 \int_{\mathcal Q}\mu\bigl(|\nabla q|^2+Mq^2\bigr)\dd X\le0.
\]
Hence, it follows that $v\le W_i$. Applying the same argument to $-v$ yields
$|v|\le W_i$, and consequently
\begin{align} \label{common:eq-collar-damping-two}
 \|v\|_{L^\infty(\widetilde{\mathcal C}_\delta)}
 \le S/M+2C_0e^{-\delta\sqrt M/2}, \qquad
 \|v\|_{L^\infty(\mathcal C_\delta)}
 \le S/M+C_0e^{-\delta\sqrt M/2}.
 \notag
\end{align}
These comparisons give smallness near reference interfaces,
whereas Lemma~\ref{common:lem-exact-collar} gives exact
vanishing at the artificial boundary.
 
We now construct the inverse of the singular operator. We allow the
annulus to depend on a parameter, as it will in the planar construction.
Extending solutions by zero lets us compare these inverses in the same
space of continuous functions, despite the change in their form domains.
Let $J$ be a compact parameter interval, let $r_\pm:J\to(0,\infty)$ be continuous, and
suppose
\[
 \Omega_\tau=\{r_-(\tau)<|X|<r_+(\tau)\}\Subset D,
 \qquad \tau\in J,
\]
where $D\subset\RR^2$ is a fixed bounded smooth domain. Assume the
inner radii, the widths of the annuli, and their distances to
$\partial D$ have uniform positive lower bounds. Let
$V_\tau\ge1$ be smooth in $\Omega_\tau$. Assume that, for every
$\tau\in J$ and $K\Subset\Omega_\tau$, one has $K\Subset\Omega_\sigma$
for all $\sigma$ sufficiently close to $\tau$, and
$\norm{V_\sigma-V_\tau}_{C(K)}\to0$ as $\sigma\to\tau$.
Assume also that the potentials are uniformly comparable to weights
$W_\tau$ as in~\eqref{common:eq-weight}. More precisely, the endpoint comparison
holds on collars of a common width with common constants, and on
the complement the potentials are uniformly bounded.
Set
\[
 \mathcal E_\tau=H_0^1(\Omega_\tau)\cap
                    L^2(\Omega_\tau,V_\tau\dd X),
 \qquad
 \mathfrak a_\tau(v,w)=\int_{\Omega_\tau}
           (\nabla v\cdot\nabla w+V_\tau vw)\dd X.
\]
We equip $\mathcal E_\tau$ with the natural norm
$\norm v_{\mathcal E_\tau}^2=\mathfrak a_\tau(v,v)$.

\begin{proposition}\label{common:prop-singular-resolvent}
For every $\tau\in J$ and $F\in L^2(D)$, there is a unique
$\mathcal R_\tau F\in\mathcal E_\tau$ such that
\begin{equation*}
 \mathfrak a_\tau(\mathcal R_\tau F,\eta)
 =\int_{\Omega_\tau}F\eta\dd X,
 \qquad \eta\in\mathcal E_\tau.
\end{equation*}
It satisfies
\begin{align}
 \norm{\mathcal R_\tau F}_{\mathcal E_\tau}
   &\le\norm F_{L^2(\Omega_\tau)},\label{common:eq-R-energy}\\
 \norm{\mathcal R_\tau F}_{L^\infty(\Omega_\tau)}&\le\norm F_{L^\infty(D)},
       \quad F\in L^\infty(D).\label{common:eq-R-bound}
\end{align}
Extend $\mathcal R_\tau F$ by zero outside $\Omega_\tau$. Then
$\mathcal R_\tau:L^\infty(D)\to C_0(\overline D)$ is compact, the family $\{\mathcal R_\tau\}_{\tau\in J}$ is collectively
compact from bounded subsets of $L^\infty(D)$ into
$C_0(\overline D)$, and
\begin{equation}\label{common:eq-R-continuity}
 \norm{\mathcal R_\tau-\mathcal R_\sigma}_{L^\infty(D)\to C(\overline D)}
 \longrightarrow0,\qquad \tau\to\sigma.
\end{equation}
If, in addition, $\tau_j\to\tau$ and $F_j\rightharpoonup^{*}F$ in a bounded subset
of $L^\infty(D)$, then
\begin{equation}\label{common:eq-R-weakstar}
 \mathcal R_{\tau_j}F_j\longrightarrow \mathcal R_\tau F
 \quad\hbox{in }C(\overline D).
\end{equation}
Finally, if $Q$ is orthogonal, $Q\Omega_\tau=\Omega_\tau$, and
$V_\tau\circ Q=V_\tau$, the zero extension of $\mathcal R_\tau F$
to $\RR^2$ is $Q$-invariant whenever $F|_{\Omega_\tau}$ is
$Q$-invariant.
\end{proposition}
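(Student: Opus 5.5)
Existence, uniqueness and \eqref{common:eq-R-energy} come from Riesz representation in the Hilbert space $\mathcal E_\tau$ (Lemma~\ref{common:lem-density}, with $V_\tau$ comparable to $W_\tau$). The functional $\eta\mapsto\int F\eta$ is bounded, since $\|\eta\|_{L^2}\le\|\eta\|_{\mathcal E_\tau}$ because $V_\tau\ge1$. Testing with $\eta=\mathcal R_\tau F$ gives the energy bound.

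For \eqref{common:eq-R-bound} I would use a truncation argument. Set $a=\|F\|_\infty$ and test with $(u-a)_+$, where $u=\mathcal R_\tau F$. This function lies in $\mathcal E_\tau$, since $0\le (u-a)_+\le |u|$. We obtain
\[
 \int|\nabla(u-a)_+|^2+\int V_\tau u(u-a)_+=\int F(u-a)_+ .
\]
On $\{u>a\}$ we have $V_\tau u\ge u>a\ge F$, so $(u-a)_+=0$. Applying the same argument to $-u$ gives $|u|\le a$.

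Next, regularity and continuity up to $\partial\Omega_\tau$. In the interior, $V_\tau$ is smooth, so $u\in W^{2,p}_{\loc}$ and $u$ is $C^{1,\gamma}$ on compact subsets, with bounds depending only on $\|F\|_\infty$ and the local size of $V_\tau$. Near $\partial\Omega_\tau$ I would apply the one-sided collar barrier $W_1$ from \eqref{common:eq-collar-barriers} with $\mathcal L=\Delta$ ($\mu=1$), taking $c=V_\tau$ and $S=C_0=\|F\|_\infty$. The singular growth of $V_\tau$ makes $c\ge M$ on collars $\mathcal C_{2\delta(M)}$ whose width shrinks as $M\to\infty$; the finite-energy hypothesis $\int c v^2<\infty$ is exactly what $\mathcal E_\tau$ provides. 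This yields
\[
 \|u\|_{L^\infty(\mathcal C_\delta)}\le \|F\|_\infty\bigl(M^{-1}+e^{-\delta\sqrt M/2}\bigr).
\]
Choosing $\delta=\delta(M)$ with $\delta\sqrt M\to\infty$ (possible because $V\gtrsim d^{-2}\log^2(1/d)$, so on $\{d<2\delta\}$ we have $M\sim\delta^{-2}\log^2(1/\delta)$, hence $\delta\sqrt M\sim\log(1/\delta)$) gives a modulus $\omega(d)\to0$, uniform in $\tau$ and in $\|F\|_\infty\le1$, such that $|u|\le\omega(d)$.

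I expect this uniform boundary decay to be the main obstacle, and the point where the uniform collar comparability of $V_\tau$ is used. It shows that the zero extension is continuous on $\overline D$, vanishes outside $\Omega_\tau$, and forms an equicontinuous family: interior estimates control the region $d\ge\delta$, and the modulus controls the region $d<\delta$. Arzel\`a--Ascoli then gives compactness and collective compactness into $C_0(\overline D)$.

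For \eqref{common:eq-R-weakstar}, take $\tau_j\to\tau$ and $F_j\rightharpoonup^*F$, and set $u_j=\mathcal R_{\tau_j}F_j$. By collective compactness a subsequence converges uniformly to some $u$. Boundary decay forces $u=0$ outside $\Omega_\tau$. The energies are bounded, so $u\in H^1_0(\Omega_\tau)$, and Fatou's lemma with locally uniform convergence $V_{\tau_j}\to V_\tau$ gives $\int V_\tau u^2<\infty$, so $u\in\mathcal E_\tau$. For $\eta\in C_c^\infty(\Omega_\tau)$, the support of $\eta$ lies in $\Omega_{\tau_j}$ for large $j$, so we can pass to the limit in $\mathfrak a_{\tau_j}(u_j,\eta)=\int F_j\eta$. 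Density (Lemma~\ref{common:lem-density}) and uniqueness identify $u=\mathcal R_\tau F$, and hence the whole sequence converges.

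Operator-norm continuity \eqref{common:eq-R-continuity} follows by contradiction. If it failed, there would be $F_j$ in the unit ball with $\|(\mathcal R_{\tau_j}-\mathcal R_\sigma)F_j\|\ge\epsilon$. Extract a weak-* convergent subsequence $F_j\rightharpoonup^*F$ by Banach--Alaoglu and apply \eqref{common:eq-R-weakstar} both to the family $\tau_j\to\sigma$ and to the constant family $\sigma$; both limits equal $\mathcal R_\sigma F$, a contradiction.

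Symmetry follows from uniqueness: if $Q\Omega_\tau=\Omega_\tau$ and $V_\tau\circ Q=V_\tau$, then $u\circ Q$ solves the same problem whenever $F|_{\Omega_\tau}$ is $Q$-invariant, because $Q$ is an isometry of $\mathcal E_\tau$. Hence $u\circ Q=u$.
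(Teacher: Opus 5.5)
Your proposal is correct and follows the paper's proof essentially step by step: Riesz representation, the truncation test for the $L^\infty$ bound, and one-sided collar barriers with $M_\delta\sim\delta^{-2}\log^2(1/\delta)$ for a boundary modulus uniform in $\tau$. The remaining steps also match: interior estimates plus this modulus for compactness, identification of the limit through local convergence of the potentials, exhaustion, density and uniqueness, the weak-star contradiction argument for \eqref{common:eq-R-continuity}, and uniqueness for the symmetry. The only difference is cosmetic: you package collective compactness as uniform equicontinuity plus Arzel\`a--Ascoli, whereas the paper runs a diagonal Cauchy argument on the interior sets $K_\delta$. Both use the same uniform interior bounds on $\{d_\tau\ge\delta\}$ and the same boundary decay.
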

\begin{proof}
We first prove existence, uniqueness and the estimates
\eqref{common:eq-R-energy} and \eqref{common:eq-R-bound}.
Lemma~\ref{common:lem-density} makes $\mathcal E_\tau$ a Hilbert space
with inner product $\mathfrak a_\tau$. Since $V_\tau\ge1$, the source
functional has norm at most $\norm F_{L^2(\Omega_\tau)}$. The Riesz representation theorem gives a unique $v=\mathcal R_\tau F$ and~\eqref{common:eq-R-energy}. 

Next, for $F\in L^\infty(D)$, set $A=\|F\|_{L^\infty(D)}$.
The truncations $(v-A)_+$ and $(-v-A)_+$ belong to
$\mathcal E_\tau$, since they have zero trace and are
bounded by $|v|$. Testing the weak equation with these
truncations, as in the proof of
Proposition~\ref{common:prop-monotone-inverse}, gives
\eqref{common:eq-R-bound}. Interior elliptic regularity, see \cite{GT}, gives $v\in W_{\mathrm{loc}}^{2,p}(\Omega_\tau)$ for every
$1<p<\infty$, and hence a continuous representative
in $\Omega_\tau$, still denoted by $v$.

We now handle the  boundary behavior. We extend $\mathcal{R}_\tau F$ by zero outside $\Omega_\tau$. First, by linearity, it suffices to consider
$\|F\|_{L^\infty(D)}\le1$. Also, we introduce the notation $d_\tau=\dist(\cdot,\partial\Omega_\tau)$. The uniform lower comparison for $V_\tau$ gives, for all
sufficiently small $\delta>0$, independently of $\tau$, 
\[
 V_\tau\ge M_\delta:=c\,\frac{\log^2(1/\delta)}{\delta^2}
 \quad\hbox{on }0<d_\tau<2\delta.
\] 
The inner-radius bound controls the radial coefficient of $\Delta$.
Thus, in either collar with $s=d_\tau$, the one-sided supersolution
from~\eqref{common:eq-collar-barriers} is
\[
 B_\delta(s)=M_\delta^{-1}
       +e^{-\sqrt{M_\delta}(2\delta-s)/2},\qquad
 (-\Delta+M_\delta)B_\delta\ge1.
\]
At $s=0$, the trace of $v$ is zero and $B_\delta(0)>0$.
At $s=2\delta$, we have $|v|\le1<B_\delta(2\delta)$.
Applying the preceding comparison to $v$ and $-v$, with
potential $c = V_\tau$ and $M=M_\delta$, therefore gives
\[
 \sup_{\{d_\tau\le\delta\}}|\mathcal R_\tau F|
 \le\eta(\delta):=M_\delta^{-1}
                    +e^{-\delta\sqrt{M_\delta}/2}\longrightarrow0.
\]
The convergence is thus uniform in $\tau$ and the unit source ball, and
makes the zero extension continuous on $\overline D$. 

Concerning the compactness and the parameter dependence, take $\tau_j\to\tau$ and sources with $\norm{F_j}_{L^\infty(D)}\le S$.
Since $L^1(D)$ is separable, a subsequence satisfies
$F_j\rightharpoonup^*F$ in $L^\infty(D)$. The zero extensions
$v_j=\mathcal R_{\tau_j}F_j$ are bounded in $H_0^1(D)$ by
\eqref{common:eq-R-energy}. On each compact subset of $\Omega_\tau$,
the nearby annuli contain a common neighborhood and the potentials
are uniformly bounded. Interior $W^{2,p}$ estimates, see e.g. \cite{GT}, therefore give subsequential uniform convergence there.

For $K_\delta=\{X\in\Omega_\tau:d_\tau(X)\ge2\delta\}$, once the
boundary radii differ by less than $\delta$, every point of
$\Omega_{\tau_j}\setminus K_\delta$ has $d_{\tau_j}<3\delta$. Hence
\begin{align*}
 \sup_{\overline D\setminus K_\delta}|v_j|&\le S\eta(3\delta),\\
 \norm{v_j-v_k}_{C(\overline{D})}
 &\le\norm{v_j-v_k}_{C(K_\delta)}+2S\eta(3\delta)
\end{align*}
for large $j,k$. A diagonal subsequence converging on all these
interior sets is consequently Cauchy on $\overline D$, and thus we have the desired compactness.

Next, we identify the limit to prove the continuity statements.
Denote the uniform limit by $v$. After passing to a further
subsequence, we also have $v_j\rightharpoonup v$ in $H_0^1(D)$. Outside $\overline\Omega_\tau$ the sequence eventually
vanishes; continuity and the trace characterization on the smooth
annulus give $v|_{\Omega_\tau}\in H_0^1(\Omega_\tau)$. Moreover,
for $K\Subset\Omega_\tau$,  
\[
 \int_KV_\tau v^2\dd X
 =\lim_j\int_KV_{\tau_j}v_j^2\dd X\le |D| S^2.
\]
Exhaustion gives $v\in\mathcal E_\tau$. For every $\eta \in C_c^{\infty}(\Omega_\tau)$, weak gradient convergence, local uniform convergence
of the potential term and weak-star convergence of the sources give
\[
 \mathfrak a_\tau(v,\eta)=\int_{\Omega_\tau}F\eta\dd X.
\]
Lemma~\ref{common:lem-density} extends this identity to every $\eta \in \mathcal{E}_\tau$, and uniqueness gives $v=\mathcal R_\tau F$.

Since $J$ is compact, this argument proves collective compactness.
When the sources already converge weak-star, every uniform
subsequential limit is the same, proving~\eqref{common:eq-R-weakstar}.
If~\eqref{common:eq-R-continuity} failed, there would be
$\tau_j\to\tau$ and $\norm{F_j}_{L^\infty(D)}\le1$ with
$\norm{\mathcal R_{\tau_j}F_j-\mathcal R_\tau F_j}_{C(\overline{D})}\ge\epsilon>0$.
Extracting a weak-star convergent source subsequence and applying
\eqref{common:eq-R-weakstar} at varying and fixed parameters makes
both terms converge to $\mathcal R_\tau F$, a contradiction.

Finally, an orthogonal change of variables preserving the data
preserves the form equation, so uniqueness proves symmetry.
\end{proof}

\subsection{A common linearization argument}

Subtracting two nonlinear inverse equations and dividing by the size
of the perturbation gives a linear equation whose potential is a
secant slope of $g$. Inside the active annulus, this slope converges
to the potential of the singular inverse. Near the reference
interfaces it becomes large and forces the quotient to be small.
On the plateaus, the source must also be small relative to the slope.
The following criterion combines these controls to obtain uniform
convergence and identify the first-order response in both constructions.
In both applications the exterior source is actually zero; condition~(ii)
also allows a residual that is small relative to the secant slope.

\begin{proposition}\label{common:prop-linearization}
Under the assumptions of
Proposition~\ref{common:prop-singular-resolvent}, let
$\tau_j\to\tau$ in $J$, let $\mu_j$ be positive smooth functions on
$\overline D$ such that $\mu_j\to1$ in $C^1(\overline D)$, and set
\[
 L_jv=\mu_j^{-1}\Div(\mu_j\nabla v).
\]
Let $c_j:D\to[1,\infty)$ be measurable, and let $(\eta_j)$ be
bounded in $L^\infty(D)$. Suppose that
$z_j\in H_0^1(D)\cap C(\overline D)$ satisfies
$c_jz_j\in L^2(D)$ and solves
\begin{equation}\label{common:eq-quotient}
 -L_jz_j+c_jz_j=\eta_j,\qquad\text{in }D,
\end{equation}
in the weak sense. Assume that $c_j\to V_\tau$ uniformly on compact subsets of
$\Omega_\tau$, and that the following bounds hold almost
everywhere on $\{X\in D:z_j(X)\ne0\}$:
\begin{enumerate}
 \item There exist constants $c_*>0$ and $\delta_0>0$ such that,
 for every $0<\delta<\delta_0$ and all sufficiently large $j$,
 depending on $\delta$,
 \[
  c_j(X)\ge M_\delta:=
  c_*\frac{\log^2(1/\delta)}{\delta^2},
  \qquad \dist(X,\partial\Omega_{\tau_j})<2\delta.
 \]
 \item There is a nonnegative sequence $(\sigma_j)$ with $\sigma_j\to0$
 such that
 \[
  |\eta_j(X)|\le\sigma_jc_j(X),
  \qquad X\in D\setminus\Omega_{\tau_j}.
 \]
\end{enumerate}
Then
\begin{equation}\label{common:eq-linearization-limit}
 \norm{z_j-\mathcal R_{\tau_j}\eta_j}_{C(\overline D)}
 \longrightarrow0.
\end{equation}
In particular, if $\eta_j\rightharpoonup^*\eta$ in $L^\infty(D)$,
then $z_j\to\mathcal R_\tau\eta$ in $C(\overline D)$.
\end{proposition}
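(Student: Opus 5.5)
The plan is to argue by contradiction and compactness, reducing \eqref{common:eq-linearization-limit} to the single convergence $z_j\to\mathcal R_\tau\eta$. Suppose \eqref{common:eq-linearization-limit} fails along a subsequence with $\norm{z_j-\mathcal R_{\tau_j}\eta_j}_{C(\overline D)}\ge\epsilon$. Since $L^1(D)$ is separable, we may pass to a further subsequence with $\eta_j\rightharpoonup^*\eta$. By \eqref{common:eq-R-weakstar} of Proposition~\ref{common:prop-singular-resolvent}, $\mathcal R_{\tau_j}\eta_j\to\mathcal R_\tau\eta$ in $C(\overline D)$. It therefore suffices to prove that every subsequence of $(z_j)$ has a further subsequence converging uniformly to $\mathcal R_\tau\eta$. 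The final sentence of the proposition then follows at once.

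\emph{Uniform a priori bounds.} Set $S=\sup_j\norm{\eta_j}_{L^\infty(D)}$. Since $c_j\ge1$ and $z_j\in H_0^1(D)$, testing with $(z_j-S)_+$ and $(-z_j-S)_+$ gives $|z_j|\le S$. This is the argument of Proposition~\ref{common:prop-monotone-inverse}, and the test is admissible because $c_jz_j\in L^2$ and the truncations are bounded by $|z_j|$. Testing with $z_j$ itself gives
\[
 \int_D\mu_j\bigl(|\nabla z_j|^2+c_jz_j^2\bigr)\dd X=\int_D\mu_j\eta_jz_j\dd X\le C.
\]
Hence $(z_j)$ is bounded in $H_0^1(D)$, and $\int c_jz_j^2$ is bounded uniformly in $j$. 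Because $\mu_j\to1$ in $C^1$, the drifts $\nabla\log\mu_j$ are uniformly bounded. The collar constant $B$ is therefore uniform, using also the uniform inner-radius bound.

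\emph{Smallness near the interfaces and on the plateaus.} Fix small $\delta$ and consider, for large $j$, the two-sided collars of width $2\delta$ around both circles of $\partial\Omega_{\tau_j}$. These collars stay inside $D$ by the uniform distance bound. By (i), $c_j\ge M_\delta$ wherever $z_j\ne0$ there. The two-sided barrier $W_2$ from \eqref{common:eq-collar-barriers}, with $C_0=S$, then gives
\[
 |z_j|\le\epsilon(\delta):=S/M_\delta+2Se^{-\delta\sqrt{M_\delta}/2}
\]
on the $\delta$-collars. This uses the finite-energy version of the comparison, which is justified by $c_jz_j\in L^2$, and $\epsilon(\delta)\to0$ as $\delta\to0$.

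On the remaining exterior set $U_j=\{X\in D\setminus\overline\Omega_{\tau_j}:\dist(X,\partial\Omega_{\tau_j})>\delta\}$, the boundary values of $z_j$ are $0$ on $\partial D$ and at most $\epsilon(\delta)$ on the collar edges. I will compare with the constant $w_j=\max\{\sigma_j,\epsilon(\delta)\}$. Wherever $z_j\ne0$, condition (ii) gives $-L_jw_j+c_jw_j=c_jw_j\ge\sigma_jc_j\ge|\eta_j|$. Testing with $(z_j-w_j)_+$ and $(-z_j-w_j)_+$ then yields $|z_j|\le w_j$ on $U_j$. Consequently $\limsup_j\sup_{D\setminus K_\delta}|z_j|\le\epsilon(\delta)$, where $K_\delta=\{X\in\Omega_\tau:d_\tau(X)\ge2\delta\}$, using that the boundary radii converge.

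\emph{Interior convergence and identification of the limit.} On each $K\Subset\Omega_\tau$ we have $c_j\to V_\tau$ uniformly and $L_j\to\Delta$ with $C^1$ coefficients. Interior $W^{2,p}$ estimates bound $z_j$ in $C^{1,\gamma}(K)$. A diagonal subsequence converges uniformly on every $K_\delta$, and combined with the previous step it is Cauchy in $C(\overline D)$; let $v$ be its limit. Then $v$ vanishes outside $\Omega_\tau$, and $z_j\rightharpoonup v$ in $H_0^1(D)$, so $v|_{\Omega_\tau}\in H_0^1(\Omega_\tau)$. Moreover, by Fatou on exhaustions,
\[
 \int_KV_\tau v^2\dd X=\lim_j\int_Kc_jz_j^2\dd X\le C,
\]
so $v\in\mathcal E_\tau$. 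Passing to the limit in the weak form with $\varphi\in C_c^\infty(\Omega_\tau)$ gives $\mathfrak a_\tau(v,\varphi)=\int F\varphi$ with $F=\eta$. This uses $\mu_j\to1$, local uniform convergence of $c_jz_j$, and weak-star convergence of $\eta_j$. Lemma~\ref{common:lem-density} and uniqueness then give $v=\mathcal R_\tau\eta$.

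The step I expect to be delicate is the exterior comparison on $U_j$. There $c_j$ is unbounded and only controlled on $\{z_j\ne0\}$, so one must check that the truncation tests are admissible. One must also check that the redefinition $c_j=M$ on $\{z_j=0\}$ (respectively, the constant barrier) is compatible with (ii). I would handle this exactly as in the finite-energy remark preceding Proposition~\ref{common:prop-singular-resolvent}: since $0\le q\le|z_j|$, one has $\int c_jq^2<\infty$, and on $\{q>0\}$ one has $z_j\ne0$.
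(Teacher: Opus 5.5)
Your proposal is correct and follows the paper's proof essentially step by step. It uses the same $L^\infty$ and energy bounds and the same two-sided barrier $W_2$ with $M=M_\delta$ and $C_0=S$ near $\partial\Omega_{\tau_j}$. It then uses a constant-barrier comparison on the exterior driven by (ii). The paper tests on all of $D\setminus\overline\Omega_{\tau_j}$, using that the trace of $(z_j-a_j)_+$ vanishes on $\partial\Omega_{\tau_j}$; you test only beyond the $\delta$-collar, which is equivalent. The interior compactness and the identification via Lemma~\ref{common:lem-density}, uniqueness and \eqref{common:eq-R-weakstar} are also the same.

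One bookkeeping fix is needed. Points of $\Omega_{\tau_j}\setminus K_\delta$ may lie at distance nearly $2\delta$ from $\partial\Omega_{\tau_j}$, which is outside the $\delta$-collar where your barrier bound holds. So the estimate on $D\setminus K_\delta$ should be $\epsilon(3\delta)$ rather than $\epsilon(\delta)$, obtained by running the collar comparison at scale $3\delta$ as the paper does. This is harmless, since $\epsilon(\delta)\to0$.
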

 
\begin{proof}
We first establish uniform bounds. Set
$
 S:=\sup_j\|\eta_j\|_{L^\infty(D)}.
$
The truncation argument in
Proposition~\ref{common:prop-singular-resolvent}, followed by
testing with $z_j$, gives
\begin{equation}\label{common:eq-quotient-bounds}
 \|z_j\|_{L^\infty(D)}\le S,\qquad
 \int_D\mu_j\bigl(|\nabla z_j|^2+c_jz_j^2\bigr)\dd X
 \le CS^2.
\end{equation}
Since the weights $\mu_j$ have uniform
positive lower and upper bounds and $c_j\ge1$, the sequence
$(z_j)$ is bounded in $H_0^1(D)$.

We next control $z_j$ near $\partial\Omega_{\tau_j}$. We first write
\[
 U_{j,\delta}:=
 \{X\in D:\dist(X,\partial\Omega_{\tau_j})<\delta\},
\]
and choose $\delta>0$ sufficiently small so that $U_{j,2\delta}$
consists of two disjoint collars contained in $D$, uniformly
in $j$. The vector fields
$b_j:=\nabla\log\mu_j$ are uniformly bounded in
$L^\infty(D;\RR^2)$. Together with the lower bound for the
inner radii, this gives a uniform bound for the radial
first-order coefficients of $L_j$ on these collars.
As $M_\delta\to\infty$ when $\delta\downarrow0$, we may also
assume that $M_\delta$ satisfies the size condition required
for the comparison in~\eqref{common:eq-collar-barriers}.

For all sufficiently large $j$, compare $z_j$ and $-z_j$
with the two-sided barrier $W_2$, taking $M=M_\delta$ and
$C_0=S$. The boundary traces are bounded by $S$.
Moreover, the tests $(\pm z_j-W_2)_+$ vanish wherever
$z_j=0$, so assumption~(i) provides the required lower
bound for $c_j$ on their supports. The comparison therefore
gives
\begin{equation}\label{common:eq-quotient-collar}
 \|z_j\|_{L^\infty(U_{j,\delta})}
 \le\omega(\delta):=
 \frac{S}{M_\delta}
 +2S e^{-\delta\sqrt{M_\delta}/2},
 \qquad \omega(\delta)\longrightarrow0
 \quad\text{as }\delta\downarrow0.
\end{equation}

To control the remaining part of the complement, set
\[
 E_j:=D\setminus\overline{\Omega}_{\tau_j},
 \qquad a_j:=\max\{\omega(\delta),\sigma_j\}.
\]
The function $q_j:=(z_j-a_j)_+|_{E_j}$ belongs to
$H_0^1(E_j)$. Indeed, its trace vanishes on $\partial\Omega_{\tau_j}$
by~\eqref{common:eq-quotient-collar}, and on $\partial D$
because $z_j\in H_0^1(D)$. Its zero extension is therefore
an admissible test in $H_0^1(D)$.
On $\{X\in E_j:q_j(X)>0\}$, assumption~(ii) gives
\[
 \eta_j\le\sigma_jc_j\le a_jc_j.
\]
Testing the equation with this extension yields
\[
 \int_{E_j}\mu_j|\nabla q_j|^2\dd X
 +\int_{E_j}\mu_jc_jq_j^2\dd X\le0.
\]
The potential term is finite because $q_j\le|z_j|$ and
$c_jz_j\in L^2(D)$. Hence $q_j=0$. Applying the same
argument to $-z_j$ and using continuity on the annular
boundary, we obtain
\[
 \sup_{X\in D\setminus\Omega_{\tau_j}}|z_j(X)|
 \le\max\{\omega(\delta),\sigma_j\}
\]
for every sufficiently small $\delta>0$ and all sufficiently
large $j$.

We now proceed as in the proof of Proposition \ref{common:prop-singular-resolvent}. Interior elliptic estimates give convergence on compact subsets of the annulus. The estimates near its boundary and in its complement extend this convergence to the whole domain. We then identify the limit using the energy bound and the weak formulation. For an arbitrary subsequence, extract
$\eta_j\rightharpoonup^*\eta$ in $L^\infty(D)$ and
$z_j\rightharpoonup z$ in $H_0^1(D)$. Here $\mu_j\to1$ in $C^1$,
so the drifts tend to zero, and the secant slopes $c_j$ converge locally
uniformly to $V_\tau$. Thus interior estimates again give, after a further
extraction, local uniform convergence in $\Omega_\tau$. The collar and exterior bounds just proved
replace the boundary damping and exact zero extensions in that
proposition. They give convergence in $C(\overline D)$ and a limit
vanishing outside $\Omega_\tau$, with
$z|_{\Omega_\tau}\in H_0^1(\Omega_\tau)$. Indeed, set
\[
 K_\delta:=\{X\in\Omega_\tau:
       \dist(X,\partial\Omega_\tau)\ge2\delta\}.
\]
For every sufficiently small fixed $\delta>0$, once the boundary
radii differ by less than $\delta$, each point of
$\Omega_{\tau_j}\setminus K_\delta$ lies in $U_{j,3\delta}$.
The collar and exterior estimates therefore give
\[
 \sup_{X\in\overline D\setminus K_\delta}|z_j(X)|
 \le\max\{\omega(3\delta),\sigma_j\}
\]
for all sufficiently large $j$, with a threshold depending on $\delta$.
First letting $j\to\infty$ and then $\delta\downarrow0$, this estimate
and uniform convergence on $K_\delta$ give convergence on $\overline D$.

The new energy bound~\eqref{common:eq-quotient-bounds} supplies the
remaining singular-domain condition: for every $K\Subset\Omega_\tau$, it follows that
\[
 \int_K V_\tau z^2\dd X
 =\lim_j\int_K\mu_jc_jz_j^2\dd X\le CS^2.
\]
Exhaustion gives $z\in\mathcal E_\tau$. Convergence of the weights,
local convergence of the slopes, weak convergence of the gradients and
weak-star convergence of the sources now give
$\mathfrak a_\tau(z,\xi)=\int_{\Omega_\tau}\eta\xi\dd X$
first for $\xi\in C_c^\infty(\Omega_\tau)$ and then, by
Lemma~\ref{common:lem-density}, for all $\xi\in\mathcal E_\tau$.
Uniqueness identifies $z=\mathcal R_\tau\eta$.
By~\eqref{common:eq-R-weakstar}, $\mathcal R_{\tau_j}\eta_j$ has
the same uniform limit. Since the starting subsequence was arbitrary,
this proves~\eqref{common:eq-linearization-limit}; the final assertion
follows from~\eqref{common:eq-R-weakstar}.
\end{proof}

\section{Nonlocalizable three-dimensional Euler flows}
\label{s:sec:profile}

We construct the three-dimensional flows by placing the radial reference profile at a large distance \(R\) from the symmetry axis and solving the resulting Grad–Shafranov equation as a perturbation of the planar equation. The swirl parameter is chosen to satisfy the compatibility condition associated with the translation mode. Sections \ref{common:sec-prelim} and \ref{common:sec-compact} provide smooth localization, while the first-order correction shows that the pressure varies along some streamlines.

\begin{theorem}\label{thm:spatial-precise}
There exist $a>1$, $\delta\in(0,1/8)$ and $R_*>a+2$ such that, for every
$R>R_*$, equation~\eqref{eq:euler} in dimension three admits a solution
\[
 u_R\in C_c^\infty(\RR^3;\RR^3),\qquad
 p_R\in C_c^\infty(\RR^3).
\]
The velocity is nonzero, axisymmetric with nonzero swirl, and satisfies
\[
 u_R\cdot\nabla p_R\not\equiv0.
\]
In cylindrical coordinates $(r,\varphi,z)$, its support obeys
\begin{equation}\label{eq:spatial-support}
 \supp u_R\Subset
 \left\{(r\cos\varphi,r\sin\varphi,z):
 a-\delta<\sqrt{(r-R)^2+z^2}<a+1+\delta\right\}.
\end{equation}
More precisely, there are numbers $\lambda_R>0$ and smooth functions
$\psi_R:\RR^2\to[0,2]$ such that $u_R$ is given by
\eqref{s:eq:re-velocity} with
$\Psi_R(r,z)=\psi_R(r-R,z)$ and
$F(t)=\sqrt{2\lambda_R}\,t(2-t)$.
The velocity extends smoothly by zero to the symmetry axis.
In particular, there is a smooth profile $\phi$, equal to $0$ on $[0,a]$, strictly
increasing on $(a,a+1)$ and equal to $2$ on $[a+1,\infty)$, for which
\begin{equation*}
 \psi_R\longrightarrow U:=\phi\bigl(|\cdot|)
 \quad\textup{in }C^q(\RR^2)\textup{ for every integer }q\geq0,
 \qquad \lambda_R\longrightarrow\lambda_*>0.
\end{equation*}
Both limits are taken as $R\to\infty$. Moreover,
$\norm{\psi_R-U}_{C(\RR^2)}=O(R^{-1})$ as $R\to\infty$, and all the functions
$\psi_R-2$ have support in one fixed compact set.
\end{theorem}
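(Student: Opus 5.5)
The construction runs through the axisymmetric Grad--Shafranov reduction. For axisymmetric flows with swirl, write the velocity in terms of a stream function $\Psi(r,z)$ and a swirl function $F(\Psi)$, as in~\eqref{s:eq:re-velocity}. The Euler equations then reduce to
\[
 \partial_r^2\Psi-\tfrac1r\partial_r\Psi+\partial_z^2\Psi
 =-r^2G'(\Psi)-FF'(\Psi).
\]
Set $r=R+x$, $\Psi(r,z)=\psi(x,z)$ and divide by a suitable power of $R$. This yields a planar equation $\Delta\psi-\tfrac{1}{R+x}\partial_x\psi=N_R(X,\psi)$, with weight $\mu=(R+x)^{-1}$. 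Here $N_R$ is the planar nonlinearity plus an $O(R^{-1})$ error. The error contains the swirl term $\lambda\,t(2-t)(1-t)$, whose coefficient we keep free. We choose the reference profile $\phi$ via $H$: near $a$, $\phi(a+s)=H(s)$, and near $a+1$, $\phi=2-H(a+1-s)$. The planar nonlinearity $N_\infty$ is defined by the value law $\phi''+\phi'/\rho$ expressed as a function of $\phi$ on each branch. Since $\phi$ is monotone, this is a single function of $t\in[0,2]$. By~\eqref{common:eq-value-calculus} it satisfies~\eqref{common:eq-nearlinear}, and it satisfies the absorption hypothesis of Lemma~\ref{common:lem-exact-collar} at both plateau values. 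The radial $U$ then solves the limit equation exactly. We split $N_R=g-k$ as in~\eqref{common:eq-monotone-splitting} on a fixed disk-with-hole $D\supset\overline\Omega$, where $\Omega=\{a<|X|<a+1\}$, with boundary data $0$ inside and $2$ outside. The problem becomes $\psi=\mathcal T_R[k(\cdot,\psi)]$, using Proposition~\ref{common:prop-monotone-inverse}.

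To find a solution near $U$, restrict to functions even in $z$. This kills the vertical translation mode. The linearization at $R=\infty$ is $-\Delta+V$ on $\Omega$ with the singular potential $V=\partial_tN_\infty(U)$, which behaves like~\eqref{common:eq-weight} by~\eqref{common:eq-Hratio3}. By Proposition~\ref{s:prop:spec-planar-kernel}, its kernel in the even class is spanned by $\partial_xU$. Write $\psi=U+R^{-1}w$. Subtracting the equation for $U$ and dividing by $R^{-1}$ gives a quotient equation of the form~\eqref{common:eq-quotient}, whose secant slope satisfies conditions (i) and (ii) of Proposition~\ref{common:prop-linearization}. Consequently, $w$ is governed up to $o(1)$ by $\mathcal R$ applied to the source $E_\lambda=-\partial_xU/\rho\cdot(\dots)+\lambda\,U(2-U)(1-U)$ plus a compact remainder. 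Solvability in the kernel direction requires $\int_\Omega E_\lambda\,\partial_xU=0$. Lemma~\ref{s:lem:Melnikov} gives a $\lambda_*>0$ for which this holds transversally, since the derivative in $\lambda$ is nonzero.

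We then set up a map on $(w,\lambda)$: the pair $w\perp\partial_xU$ together with $\lambda$ near $\lambda_*$, with the kernel component fed back into the $\lambda$-equation. This map is a compact perturbation of the identity with nonzero Leray--Schauder index at $R=\infty$. The index is computed from the invertible linear part on the complement of the kernel and the transversal Melnikov derivative. Homotopy invariance on a small ball then yields solutions $(\psi_R,\lambda_R)$ for all large $R$, with $\|\psi_R-U\|_{C}=O(R^{-1})$ and $\lambda_R\to\lambda_*$. This degree step is where I expect the main obstacle. Because $\mathcal T$ is not $C^1$, admissibility of the homotopy must come from Proposition~\ref{common:prop-linearization}. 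We have to argue that fixed points on the sphere of radius $\rho$ in rescaled variables would converge to a nontrivial kernel element violating the orthogonality or the Melnikov condition.

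Smallness of $\psi_R-U$ near $\partial D$ together with Lemma~\ref{common:lem-exact-collar} gives $\psi_R\equiv0$ near the inner boundary and $\psi_R\equiv2$ near the outer one, applied to $2-\psi_R$ there. We extend $\psi_R$ by these constants. Theorem~\ref{common:lem-flat-regularity} applies to $\psi_R$ and to $2-\psi_R$, with the uniform version from the remark, and gives smoothness, uniform $C^q$ bounds, and hence $C^q$ convergence by interpolation. The swirl $F=\sqrt{2\lambda_R}\,t(2-t)$ vanishes at both plateaus, so $u_R$ is compactly supported in the stated shell and vanishes near the axis. The pressure comes from the Bernoulli relation and is compactly supported after normalization. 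Finally, nonlocalizability follows from Proposition~\ref{s:prop:nl-euler-field}: expanding $|u_R|^2$ along level curves, the $R^{-1}$ angular term is a nonzero multiple of an expression in $\phi,\phi'$ that cannot vanish identically by ODE uniqueness. Hence $p_R$ varies along some streamline for large $R$.
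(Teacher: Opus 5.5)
Your outline follows the paper's architecture: the profile built from $H$, the monotone splitting on the fixed annulus $D$, evenness in $z$, quotient estimates via Proposition~\ref{common:prop-linearization}, a Leray--Schauder degree in $(w,\lambda)$ with a phase condition, and exact collars plus Theorem~\ref{common:lem-flat-regularity}. The gap is in the step that is genuinely three-dimensional: how the swirl parameter enters the solvability condition.

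You never fix the Bernoulli function $G$. The first-order source you write contains the swirl only through the radial term $\lambda U(2-U)(1-U)$, which is a multiple of $\lambda q'(U)$ with $q=t^2(2-t)^2$. A radial function pairs to zero with $\partial_xU=v(\rho)\cos\theta$. Indeed, $\int_\Omega q'(U)\,\partial_xU\dd x\dd z=\int_\Omega\partial_x[q(U)]\dd x\dd z=0$, because $q(0)=q(2)=0$. With your source the solvability condition is therefore independent of $\lambda$. The curvature part $U_x$ pairs to $\pi I>0$ and the term $2xf(U)$ pairs to zero, so no $\lambda_*$ exists. This is exactly what happens if you scale the swirl down by $R^{-1/2}$ while keeping $G'\sim R^{-2}f$. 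If instead you keep the swirl of order one without compensation, $U$ no longer solves the limit equation. Lemma~\ref{s:lem:Melnikov} is proved for a different source and cannot be quoted for yours.

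The missing idea is the paper's choice $B'=R^{-2}(f+\lambda q')$ together with $F_\lambda=\sqrt{2\lambda}\,t(2-t)$. Then $r^2B'-F_\lambda F_\lambda'=A_\varepsilon(f+\lambda q')-\lambda q'$ with $A_\varepsilon=(1+\varepsilon x)^2$. The swirl cancels at order zero, so the limit equation is $\Delta\psi=f(\psi)$ for every $\lambda$ and $U$ is an exact, $\lambda$-independent reference. The swirl reappears at order $\varepsilon$ only as $2x\lambda q'(U)$. The first-order source is $E_{1,\lambda}=U_x+2x\bigl(f(U)+\lambda q'(U)\bigr)$. The factor $2x=2\rho\cos\theta$ is what lets the swirl see the translation mode. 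Integrating by parts, $\int_a^{a+1}\rho^2vq'(\phi)\dd\rho=-2J$, which gives $\int_\Omega ZE_{1,\lambda}=\pi I-4\pi\lambda J$. Its zero $\lambda_*=I/(4J)$ is positive, as a real swirl requires.

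The same structure drives nonlocalizability, which you also misdescribe. The angular coefficient is $A=vb'-v'b-\rho\bigl(v^2+2\lambda_*q(\phi)\bigr)$. It involves the first-order correction $w_*=b(\rho)\cos\theta$, not only $\phi$ and $\phi'$. You first need the identity $A'+A/\rho=v^2-4\lambda_*q(\phi)$, obtained from the ODE for $b$ with forcing $v+2\rho\bigl(f(\phi)+\lambda_*q'(\phi)\bigr)$. Only then does ODE uniqueness for $\phi'=2\sqrt{\lambda_*}\,\phi(2-\phi)$ apply.

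Once this Bernoulli choice is inserted, the rest of your outline coincides with the paper's proof. That includes the degree step, with the ball centred at the unique zero $(w_*,\lambda_*)$ of the affine limit map and admissibility coming from the uniform convergence in Proposition~\ref{s:thm:abs-scaled-limit}.
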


\subsection{Scalar reduction and reference profile}\label{s:sec:scalar-reduction}

We first choose the scalar equation and its radial reference. In
cylindrical coordinates $(r,\varphi,z)$, a stream function $\Psi$, a swirl function $F$, and a Bernoulli function $B$ define
\begin{equation}\label{s:eq:re-velocity}
 u=\frac1r(-\Psi_z e_r+F(\Psi)e_\varphi+\Psi_r e_z), \qquad
 p=B(\Psi)-\tfrac12|u|^2.
\end{equation}
This velocity is divergence free. Writing
$\Delta_*\Psi=\Psi_{rr}+\Psi_{zz}-r^{-1}\Psi_r$, direct differentiation
gives
\[
 u\times\curl u=\frac{\Delta_*\Psi+F(\Psi)F'(\Psi)}{r^2}\nabla\Psi.
\]
Thus, Euler follows from the Grad--Shafranov equation
\begin{equation}\label{s:eq:GS-physical}
 \Delta_*\Psi=r^2B'(\Psi)-F(\Psi)F'(\Psi).
\end{equation}
This classical reduction is used in~\cite{Gavrilov,CLV,DEPS}. We will
choose $F$ to vanish at the plateau values $0$ and $2$, so constant
regions of $\Psi$ give stagnant regions of the flow. The later
reconstruction checks smoothness at those regions and at the axis.

There are two distinct radii. The radius $a$ describes the annulus in
the centered meridian plane and will be fixed sufficiently large for
spectral separation. The torus radius $R$ then tends to infinity with
$a$ fixed. In the coordinates $x=r-R$, $z=z$, put
$\varepsilon=R^{-1}$. The operator $\Delta_*$ then becomes
\[
 L_\varepsilon=\Delta-\frac{\varepsilon}{1+\varepsilon x}\partial_x,
\]
so toroidal curvature is a small perturbation of the planar reference
equation. We now build that reference state and choose the scalar data.

The reference stream function must join the two plateaus monotonically
and have a value law with the nearly linear endpoint bounds. We use
$H$ from~\eqref{common:eq-H} near each endpoint and a monotone
interpolation in between.

Choose $\chi\in C^\infty(\RR)$ nondecreasing, equal to zero for $s\le1/3$
and to one for $s\ge2/3$, and define
\begin{equation*}
 \Phi(s)=(1-\chi(s))H(s)+\chi(s)\bigl(2-H(1-s)\bigr), \qquad 0 < s < 1.
\end{equation*}
Both interpolated functions are strictly increasing, the second is
larger than the first, and $\chi$ is nondecreasing. Thus $\Phi'>0$.
Extend $\Phi$ by $0$ for $s\le0$ and by $2$ for $s\ge1$. This gives a
single $C^\infty(\RR)$ function, with infinite-order matching at $s = 0$ and $s = 1$, with plateau values $0$ and $2$.

For $a\ge2$, we then set
\begin{equation*}
 \phi_a(\rho)=\Phi(\rho-a),\qquad
 v_a(\rho)=\phi_a'(\rho),\qquad
 \Omega_a=\{(x,z):a<\sqrt{x^2+z^2}<a+1\}.
\end{equation*}
The notation $\phi_a$ includes its two constant extensions.
Since $\Phi$ is invertible on $(0,1)$, we can define $f_a:(0,2)\to\RR$ by
\begin{equation}\label{s:eq:f-definition}
 f_a(\phi_a(\rho))=\phi_a''(\rho)+\frac{\phi_a'(\rho)}\rho.
\end{equation}
This is smooth on $(0,2)$ and extends continuously to $[0,2]$ by
$f_a(0)=f_a(2)=0$. Moreover, it follows that
\begin{equation} \label{eq:inner-endpoint-formulas}
\begin{aligned}
 f_a(H(s))&=H''(s)+\frac{H'(s)}{a+s}, \qquad 0 < s \ll 1,\\
 f_a(2-H(s))&=-H''(s)+\frac{H'(s)}{a+1-s}, \qquad 0 < s \ll 1.
\end{aligned}
\end{equation}
Also, setting $U_a(x,z)=\phi_a(\sqrt{x^2+z^2})$, with $x=\rho\cos\theta$ and $z=\rho\sin\theta$, we have that $\Delta U_a = f_a (U_a)$ in $\RR^2$. Note as well that the gradient of $U_a$ is compactly supported. 

For each $a\ge2$, choose the swirl
\[
 F_\lambda(t)=\sqrt{2\lambda}\,t(2-t),\qquad
 q(t)=t^2(2-t)^2,\qquad \lambda>0.
\]
Thus $F_\lambda^2/2=\lambda q$, and the swirl vanishes at both
plateaus. We choose the Bernoulli derivative so that the limiting
equation remains $\Delta\psi=f_a(\psi)$ for every $\lambda$, i.e. we choose
\[
 h_{a,\lambda}=f_a+\lambda q',\qquad
 B'_{a,\varepsilon,\lambda}=\varepsilon^2h_{a,\lambda},
 \qquad B_{a,\varepsilon,\lambda}(2)=0.
\]
The reference state is therefore fixed while $\lambda$ remains
available to cancel the curvature forcing in the translation mode.
Fix $\delta\in(0,1/8)$ and enlarge the reference annulus to
\[
 D_a=\{a-\delta<\rho<a+1+\delta\}.
\]
For $\varepsilon$ small enough that $1+\varepsilon x\ge1/2$ near
$\overline D_a$, set
\begin{align*}
 A_\varepsilon(x)&=(1+\varepsilon x)^2,\\
 N_{a,\varepsilon,\lambda}(x,t)
 &=A_\varepsilon(x)f_a(t)+(A_\varepsilon(x)-1)\lambda q'(t)
  =A_\varepsilon(x)h_{a,\lambda}(t)-\lambda q'(t).
\end{align*}
Since $r^2\varepsilon^2=A_\varepsilon$ and
$F_\lambda F_\lambda'=\lambda q'$, equation~\eqref{s:eq:GS-physical}
becomes
\begin{equation}\label{s:eq:GS-centered}
 -L_\varepsilon\psi+N_{a,\varepsilon,\lambda}(x,\psi)=0,
 \qquad \psi|_{\partial D_a}=U_a|_{\partial D_a}.
\end{equation}
The endpoint formulas above also provide the bounds needed for spectral
analysis and regularity.
Differentiating the inner endpoint formulas \eqref{eq:inner-endpoint-formulas} with respect to the profile value gives
\begin{equation}\label{s:eq:fprime-inner}
\begin{aligned}
 f_a'(H(s))
& =\frac{H'''(s)}{H'(s)}+\frac{H''(s)}{(a+s)H'(s)}-\frac1{(a+s)^2}, \qquad 0< s \ll 1, \\
 f_a'(2-H(s)) & = \frac{H'''(s)}{H'(s)}-\frac{H''(s)}{(a+1-s)H'(s)}-\frac1{(a+1-s)^2}, \qquad 0 < s \ll 1.
\end{aligned}
\end{equation}
By~\eqref{common:eq-Hratio2}--\eqref{common:eq-Hratio3}, the leading term
has order $\log^2(1/s)/s^2$, whereas the curvature corrections have orders
$\log(1/s)/s$ and $1$. Hence, uniformly for $a\ge2$,
\begin{equation}\label{s:eq:endpoint-growth}
 f_a'(H(s))\approx f_a'(2-H(s))\approx\frac{\log^2(1/s)}{s^2}.
\end{equation}
The value estimates~\eqref{common:eq-value-calculus}, applied to the two
endpoint formulas, also give, for every integer $j\ge0$ and every $\eta>0$,
\begin{equation}\label{s:eq:nearlinear-f}
 |f_a^{(j)}(t)|+|f_a^{(j)}(2-t)|
 \le C_{j,\eta}t^{1-j-\eta},\qquad 0<t<t_c,
\end{equation}
with constants uniform for $a\ge2$, since the curvature coefficients
and their derivatives are uniformly bounded.

\subsection{The translational kernel}

We next choose $a$ for its spectral properties, and only then let
$\varepsilon\downarrow0$ with $a$ fixed. Translations give two elements of the linearized kernel. This subsection is devoted to proving that, actually, they are the only ones for $a$ sufficiently large. 

First, we write $y(s)=\Phi'(s)>0$ for $0<s<1$. Differentiating
\eqref{s:eq:f-definition} gives
\begin{equation}\label{s:eq:spec-potential-exact}
 P_a(s):=f_a'(\Phi(s))
 =\frac{y''(s)}{y(s)}+
   \frac{y'(s)}{(a+s)y(s)}-\frac1{(a+s)^2}.
\end{equation}
Then, we choose a smooth weight $W\ge1$ on $(0,1)$ with the endpoint growth
in~\eqref{common:eq-weight}. The calculation leading to
\eqref{s:eq:endpoint-growth} gives, uniformly for $a\ge2$,
\begin{equation}\label{s:eq:spec-potential-bounds-a}
 cW-C\le P_a\le CW,\qquad |y'/y|\le C\sqrt W.
\end{equation}
Indeed, near either endpoint $y''/y$ is bounded above and below by positive multiples of $W$, while $|y'/y| \leq C \sqrt{W}$. Since $a \geq 2$, the remaining terms in \eqref{s:eq:spec-potential-exact} have absolute value at most $C \sqrt{W} + C$, giving the estimates for $P_a$ near the endpoints. On a fixed middle interval $[\eta, 1-\eta]$, the functions $y'/y, y''/y$, and $W$ are bounded because $y$ is smooth and strictly positive. Enlarging the constants therefore gives \eqref{s:eq:spec-potential-bounds-a}.
 
Next, using the energy space of Section~\ref{common:sec-compact}, set
\begin{equation}\label{s:eq:spec-planar-domain}
 \mathcal E_a=\mathcal E(\Omega_a,W(\rho-a)),\qquad
 \mathcal Q_a[h]=\int_{\Omega_a}
       (|\nabla h|^2+P_a(\rho-a)|h|^2)\dd x\dd z.
\end{equation}
The potential bounds and Lemma~\ref{common:lem-density} make this a
closed, lower bounded form with compact resolvent. It defines the
finite-energy realization of $\mathcal L_a=-\Delta+f_a'(U_a)$.

\begin{proposition}\label{s:prop:spec-planar-kernel}
There exists $a_{\mathrm s}\ge2$ such that, for every $a\ge a_{\mathrm s}$,
\begin{equation*}
 \ker\mathcal L_a
 =\operatorname{span}\{v_a(\rho)\cos\theta,\ v_a(\rho)\sin\theta\}.
\end{equation*}
On functions even under $(x,z)\mapsto(x,-z)$, it reduces to
\begin{equation*}
 \operatorname{span}\{Z\},\qquad
 Z(x,z):=v_a(\rho)\cos\theta=\partial_x\phi_a(\rho).
\end{equation*}
Furthermore, $\mathcal L_a$ has exactly one negative eigenvalue,
counted with multiplicity, and its eigenfunction is radial. On even
functions orthogonal to $Z$, it has a bounded inverse from
$L^2(\Omega_a)$ into its operator domain, and hence into $\mathcal E_a$.
\end{proposition}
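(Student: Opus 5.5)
Since $P_a$ and the domain $\Omega_a$ are radial, I will decompose by angular Fourier modes. For $h\in\mathcal E_a$ write $h=\sum_n h_n(\rho)e^{in\theta}$. Then $\mathcal Q_a$ splits into the one-dimensional forms
\[
 q_n[w]=\int_a^{a+1}\Bigl(|w'|^2+\bigl(P_a(\rho-a)+\tfrac{n^2}{\rho^2}\bigr)|w|^2\Bigr)\rho\dd\rho
\]
on $\mathcal E((a,a+1),W(\rho-a))$. Lemma~\ref{common:lem-density}, with density $\rho$, applies to these forms, so each one has compact resolvent. The associated operators are
\[
 \ell_n=-w''-\rho^{-1}w'+P_a w+n^2\rho^{-2}w .
\]
Differentiating $\Delta U_a=f_a(U_a)$ in $x$ and $z$ shows that $v_a\cos\theta$ and $v_a\sin\theta$ lie in the kernel. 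Equivalently, $\ell_1 v_a=0$. This needs $v_a\in\mathcal E$, which holds because $v_a=y(\rho-a)$ is flat at the endpoints, so $\int Wv_a^2<\infty$.

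\emph{Mode $n=1$.} The function $v_a>0$ is a positive solution of $\ell_1 w=0$ in the energy space. By the ground-state (Jacobi) argument, write $w=v_a\zeta$ with $\zeta\in C_c^\infty$ and integrate by parts to get $q_1[v_a\zeta]=\int v_a^2|\zeta'|^2\rho\dd\rho$. This is justified by the density lemma and the flatness of $v_a$. Hence $0$ is the simple lowest eigenvalue of $\ell_1$, with eigenvector $v_a$.

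\emph{Modes $|n|\ge2$.} Here $\ell_n=\ell_1+(n^2-1)\rho^{-2}>\ell_1\ge0$, so these modes have positive spectrum with no kernel.

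\emph{Mode $n=0$.} Since $\ell_0=\ell_1-\rho^{-2}$, we get $\ell_0\ge-(a)^{-2}$ and $q_0[v_a]<0$. So $\ell_0$ has at least one negative eigenvalue, and its ground state is positive and radial.

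The claims that remain are that $\ell_0$ has exactly one negative eigenvalue and that $\ker\ell_0=0$. This is where $a$ large is needed, and it is the main obstacle. As $a\to\infty$, the operators $\ell_0$ and $\ell_1$ converge, after the substitution $s=\rho-a$ with $\rho\in[a,a+1]$, to the one-dimensional operator
\[
 \ell_\infty=-\partial_s^2+\frac{y''}{y}
\]
on $(0,1)$. Its kernel in the energy space is spanned by $y=\Phi'$: it is positive there, so it is the ground state, and $0$ is simple. The second solution of $\ell_\infty w=0$ is $y\int y^{-2}$, which blows up at the endpoints and is not admissible. The second eigenvalue $\mu_2(\ell_\infty)$ is therefore positive.

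I would prove norm-resolvent (or min--max) convergence uniformly via \eqref{s:eq:spec-potential-bounds-a}. The perturbation terms
\[
 \frac{y'}{(a+s)y},\qquad \frac{1}{(a+s)^2},\qquad \frac{n^2}{\rho^2},\qquad \rho^{-1}\partial_\rho
\]
are all $O(a^{-1})$ relative to the form $\int(|w'|^2+W|w|^2)$. For the $y'/y$ term this uses $|y'/y|\le C\sqrt W$ together with Cauchy--Schwarz, and the measure $\rho\dd\rho$ is comparable to $a\dd s$ after normalization. Min--max then gives, for large $a$, that the second eigenvalue of $\ell_0$ is at least $\mu_2(\ell_\infty)/2>0$. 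Hence $\ell_0$ has exactly one negative eigenvalue and trivial kernel. I must check that the relative form-boundedness constants are genuinely uniform near the endpoints, since that is where the singular weight lives.

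Assembling the modes, $\ker\mathcal L_a$ is spanned by $v_a\cos\theta$ and $v_a\sin\theta$, and the unique negative eigenvalue is radial. Evenness in $z$ removes $\sin\theta$, leaving $Z=v_a\cos\theta$. The spectrum is discrete, so on the even subspace orthogonal to $Z$ the spectrum avoids a neighborhood of $0$, and $\mathcal L_a$ has a bounded inverse there into its operator domain. The form bound $\mathcal Q_a[h]+C\|h\|^2\gtrsim\|h\|_{\mathcal E_a}^2$ from \eqref{s:eq:spec-potential-bounds-a} then upgrades this to boundedness into $\mathcal E_a$.
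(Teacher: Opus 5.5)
Your proposal is correct. For the angular decomposition, the ground-state identity in mode $1$, the bound $\ell_n\ge(n^2-1)\rho^{-2}$ for $|n|\ge2$, and $q_0[v_a]<0$, it matches the paper's proof. The difference is how you get the gap that excludes a second negative or zero eigenvalue in mode $0$.

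You pass to the limit $a\to\infty$ and prove eigenvalue convergence to $-\partial_s^2+y''/y$ by relative form-boundedness. The paper takes no limit. It uses that $y=\Phi'$ is independent of $a$, so the identity $\mathfrak q_{a,1}[h]=\int_0^1(a+s)\,y^2|(h/y)'|^2\dd s$ can be compared directly with the case $a=2$ through $\tfrac23 w_2\le w_a\le w_2$, where $w_a=1+s/a$. Min--max then gives $\mu_2(a,1)\ge\tfrac23\mu_2(2,1)>0$ for every $a\ge2$, and $\mu_2(a,0)\ge\mu_2(a,1)-a^{-2}$.

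The paper's route is shorter, needs no perturbation estimates at the singular endpoints, and gives an explicit gap. Yours is more work, but it yields the full large-$a$ asymptotics of the spectrum. It is essentially the large-radius argument the paper itself uses in the planar construction, where $U_a'$ changes sign and depends on $a$, so no such $a$-independent comparison is available.

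The uniformity you flag as needing a check is immediate. First, $|P_a-y''/y|\le Ca^{-1}\sqrt W+a^{-2}\le Ca^{-1}W$ because $W\ge1$, so Cauchy--Schwarz is not even needed. Second, the density $1+s/a$ differs from $1$ by $O(a^{-1})$. Third, $y''/y\ge cW-C$ makes the limiting form coercive on $\mathcal E_1$ after adding a multiple of the $L^2$ norm. The drift $\rho^{-1}\partial_\rho$ is not a separate perturbation at the form level, since it is absorbed into the density.
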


\begin{proof}
We use, with a slight abuse of notation, $\mathcal E_1=\mathcal E((0,1),W)$, with squared norm
$\int_0^1(|h'|^2+W|h|^2)\dd s$, and stress that, for fixed $a$, the measures
$\dd s$ and $(a+s)\dd s$ give equivalent norms.
Then, for $n\ge0$, we consider the forms
\begin{equation}\label{s:eq:spec-mode-form}
 \mathfrak q_{a,n}[h]
 :=\int_0^1(a+s)\left(
 |h'|^2+\left[P_a(s)+\frac{n^2}{(a+s)^2}\right]|h|^2
 \right)\dd s,\qquad h\in\mathcal E_1,
\end{equation}
and denote by $\mu_1(a,n)\le\mu_2(a,n)\le\cdots$ the eigenvalues
of this form in $L^2((0,1),(a+s)\dd s)$, counted with multiplicity.
The energy domain fixes the realization at the singular endpoints.

\step{The ground state.}
For $h\in C_c^\infty(0,1)$, substitute $h=y\xi$ into
\eqref{s:eq:spec-mode-form} and integrate the cross term.
Formula~\eqref{s:eq:spec-potential-exact} cancels the zeroth-order terms,
giving
\begin{equation*}
 \mathfrak q_{a,1}[h]
 =\int_0^1(a+s)y(s)^2
       \left|\left(\frac{h}{y}\right)'\right|^2\dd s.
\end{equation*}
The identity extends to $\mathcal E_1$ by
Lemma~\ref{common:lem-density}: form convergence makes the weighted
derivatives on the right Cauchy, and local $H^1$ convergence identifies
their limit.

Since $y(s)=H'(s)$ near $0$ and $y(s)=H'(1-s)$ near $1$,
flatness gives $y\in\mathcal E_1$.
The previous identity and polarization show that $y$ lies in the operator kernel,
and equality forces $h/y$ to be constant. Thus, zero is a simple
first eigenvalue and $\mu_2(a,1)>0$.

\step{A uniform spectral gap.}
The ground-state identity compares these forms directly with the
one at $a=2$. Put $w_a(s)=1+s/a$. For every $a\ge2$ and $0<s<1$,
\[
 \tfrac23 w_2(s)\le w_a(s)\le w_2(s).
\]
Consequently, for every $h\in\mathcal E_1$,
\[
 \frac{\mathfrak q_{a,1}[h]}{a}
 \ge\frac23\frac{\mathfrak q_{2,1}[h]}2,
 \qquad
 \frac1a\int_0^1(a+s)|h|^2\dd s
 \le\frac12\int_0^1(2+s)|h|^2\dd s.
\]
The forms are nonnegative and have a common domain, so the min--max
principle gives
\[
 \mu_2(a,1)\ge\tfrac23\mu_2(2,1)>0.
\]

\step{The remaining angular modes.}
\begin{equation*}
 \mathfrak q_{a,n}[h]=\mathfrak q_{a,1}[h]
 +(n^2-1)\int_0^1\frac{|h|^2}{a+s}\dd s.
\end{equation*}
Testing with $y$ gives $\mu_1(a,0)<0$, while min--max gives
$\mu_2(a,0)\ge\mu_2(a,1)-a^{-2}>0$ for large $a$.
For $n\ge2$, nonnegativity of $\mathfrak q_{a,1}$ and
$(a+s)^{-2}\ge(a+1)^{-2}$ give, for every $a\ge2$,
\[
 \mu_1(a,n)\ge\frac{n^2-1}{(a+1)^2}>0.
\]

\step{The two-dimensional operator.}
The angular decomposition of the closed form $\mathcal Q_a$ is the
orthogonal sum of the forms $\mathfrak q_{a,n}$, with cosine and sine
sectors for $n\ge1$.
The preceding mode bounds give the claimed kernel and the unique
negative radial eigenvalue for all sufficiently large $a$. Reflection
in $z$ retains the first cosine mode and removes the sine mode.

Since zero is isolated, $\mathcal L_a$ has a bounded inverse from the even
functions in $L^2(\Omega_a)$ orthogonal to $Z$ into the operator domain,
and hence into $\mathcal E_a$.
\end{proof}

\subsection{Curvature defect and compatibility}

The curvature forcing must be orthogonal to the remaining translation
mode. We compute its projection to determine the swirl parameter. From now on, 
we fix $a\ge a_{\mathrm s}$ as in
Proposition~\ref{s:prop:spec-planar-kernel} and suppress the subscript
$a$ in $\phi_a,f_a,U_a,\Omega_a,D_a,h_{a,\lambda}$ and
$N_{a,\varepsilon,\lambda}$. In particular, we write
\[
 v=\phi',\qquad Z=U_x=v(\rho)\cos\theta,\qquad
 \mathcal L=-\Delta+f'(U).
\]

To determine the correction, we first measure how far $U$ is from
solving the curved equation. Its residual is exactly
\begin{align}
 E_{\varepsilon,\lambda}
 :=-L_\varepsilon U+N_{\varepsilon,\lambda}(x,U) =\frac{\varepsilon}{1+\varepsilon x}U_x+(2\varepsilon x+\varepsilon^2x^2)h_\lambda(U),
 \label{s:eq:reference-defect}
\end{align}
so, uniformly in $C(\overline D)$ for bounded $\lambda$,
\begin{equation*}
 \frac{E_{\varepsilon,\lambda}}\varepsilon\longrightarrow E_{1,\lambda},
 \qquad E_{1,\lambda}=U_x+2x h_\lambda(U).
\end{equation*}
All these defects vanish on both reference plateaus $D\setminus\overline\Omega$.
The first-order equation for a correction $\psi=U+\varepsilon w$ is then
\[
 \mathcal Lw=-E_{1,\lambda}.
\]
Since the even kernel is spanned by $Z$, solvability in the finite-energy
space requires the condition 
$$\int_\Omega ZE_{1,\lambda}=0.$$ 
This is precisely the orthogonality condition that determines $\lambda$.

For ease of notation, we define the positive profile integrals
\[
 I=\int_a^{a+1}\rho v(\rho)^2\dd\rho,\qquad
 J=\int_a^{a+1}\rho q(\phi(\rho))\dd\rho.
\]
Then, we prove the following:
\begin{lemma}\label{s:lem:Melnikov}
For every $\lambda>0$,
\begin{equation}\label{s:eq:Melnikov-exact}
 \int_\Omega Z E_{1,\lambda}\dd x\dd z
 =\pi I-4\pi\lambda J.
\end{equation}
The pairing has a unique positive zero, with derivative $-4\pi J\ne0$.
\end{lemma}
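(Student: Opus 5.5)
The plan is to compute the pairing directly from the explicit form $E_{1,\lambda}=U_x+2xh_\lambda(U)$ with $h_\lambda=f+\lambda q'$, splitting it into three terms:
\[
 \int_\Omega ZE_{1,\lambda}
 =\int_\Omega U_x^2+2\int_\Omega xU_xf(U)+2\lambda\int_\Omega xU_xq'(U).
\]
All integrands vanish on the plateaus $D\setminus\overline\Omega$, because $U_x=0$ there. So each integral may be taken over $\RR^2$ instead of $\Omega$. Moreover $U=\phi(\rho)$ is $C^\infty$ with compactly supported gradient, which makes every integration by parts below legitimate with no boundary terms.

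\emph{First term.} In polar coordinates $U_x=v(\rho)\cos\theta$. Hence
\[
 \int U_x^2=\int_0^{2\pi}\cos^2\theta\dd\theta\int_a^{a+1}\rho v^2\dd\rho=\pi I.
\]

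\emph{Swirl term.} Here $xU_xq'(U)=x\,\partial_x[q(U)]$. Since $q(0)=q(2)=0$, the function $q(U)$ is smooth and supported in $\overline\Omega$. Integrating by parts in $x$ therefore gives
\[
 \int xU_xq'(U)=-\int q(U)=-2\pi J,
\]
so this term contributes $-4\pi\lambda J$.

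\emph{Profile term.} Since $f(U)=\Delta U$ holds pointwise on $\RR^2$, this term is a Pohozaev-type quantity. Note that one should not integrate $f(U)U_x=\partial_x\mathcal F(U)$ against $x$, because a primitive $\mathcal F$ of $f$ need not vanish at $2$, and then $\mathcal F(U)$ is not compactly supported. Instead I would compute $\int xU_x(U_{xx}+U_{zz})$ directly. For the first piece,
\[
 \int xU_xU_{xx}=\int x\,\partial_x\bigl(\tfrac12U_x^2\bigr)=-\tfrac12\int U_x^2.
\]
For the second piece, integrating by parts in $z$ and then in $x$,
\[
 \int xU_xU_{zz}=-\int xU_{xz}U_z=-\int x\,\partial_x\bigl(\tfrac12U_z^2\bigr)=\tfrac12\int U_z^2.
\]
Hence $\int xU_xf(U)=\tfrac12\int(U_z^2-U_x^2)$. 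This vanishes because $U$ is radial: the integrals carry the factors $\sin^2\theta$ and $\cos^2\theta$ respectively, with the same radial integral. This proves \eqref{s:eq:Melnikov-exact}.

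\emph{Conclusion.} The pairing is affine in $\lambda$ with slope $-4\pi J$. Both $I$ and $J$ are positive, since $v>0$ and $q(\phi)>0$ on $(a,a+1)$. Therefore the unique zero is $\lambda_*=I/(4J)>0$ and the derivative $-4\pi J$ is nonzero. The only step needing care is the profile term: the natural primitive is not compactly supported, and radial symmetry is essential for the cancellation.
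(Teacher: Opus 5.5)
Your proof is correct, but it takes a different route from the paper's.

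\textbf{The paper's route.} The paper integrates out $\theta$ at the outset. This reduces the pairing to the one-dimensional integrals $\pi\int\rho v^2\dd\rho+2\pi\int\rho^2 v f(\phi)\dd\rho+2\pi\lambda\int\rho^2 v q'(\phi)\dd\rho$. It then uses the profile ODE $f(\phi)=v'+v/\rho$ to recognize $\rho^2 v f(\phi)$ as the exact derivative $(\tfrac12\rho^2v^2)'$. The term $\rho^2 v q'(\phi)=\rho^2(q(\phi))'$ is integrated by parts. All boundary terms vanish by flatness of $v$ and $q(0)=q(2)=0$.

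\textbf{Your route.} You stay in Cartesian coordinates. The swirl term becomes $-\int q(U)$ after a single integration by parts in $x$, which uses no symmetry at all. The profile term is handled by the Pohozaev-type identity $\int xU_x\Delta U=\tfrac12\int(U_z^2-U_x^2)$. This identity holds for any smooth $U$ with compactly supported gradient, so radial symmetry enters only through the equipartition $\int U_x^2=\int U_z^2$.

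\textbf{What each buys.} The paper's computation is shorter once everything is in radial variables, and it relies on the explicit ODE satisfied by $\phi$. Yours shows that the vanishing of the $f$-contribution depends only on $\Delta U=f(U)$ holding pointwise together with isotropy of the Dirichlet energy, not on the specific form of the profile law.

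\textbf{Your caution about primitives.} This is well founded: $\int_0^2 f\dd t=\int_a^{a+1}v^2/\rho\dd\rho>0$. Integrating $x\,\partial_x\mathcal F(U)$ would therefore produce a nonvanishing boundary contribution. The paper sidesteps the same issue by writing $\rho^2 v f(\phi)$ as the derivative of $\tfrac12\rho^2v^2$ rather than of $\rho^2\mathcal F(\phi)$.
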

\begin{proof}
Since $x=\rho\cos\theta$ and $Z=v\cos\theta$, integration in $\theta$ gives
\[
 \int_\Omega Z E_{1,\lambda}
 =\pi\int_a^{a+1}\rho v^2\dd\rho
  +2\pi\int_a^{a+1}\rho^2v f(\phi)\dd\rho
  +2\pi\lambda\int_a^{a+1}\rho^2v q'(\phi)\dd\rho.
\]
Moreover, using $f(\phi(\rho))=v'(\rho)+v(\rho)/\rho$ and integrating by parts, we get that
\begin{align*}
 \int_a^{a+1}\rho^2v f(\phi)\dd\rho
 &=\left[\frac12\rho^2v^2\right]_a^{a+1}=0,\\
 \int_a^{a+1}\rho^2v q'(\phi)\dd\rho
 &=\left[\rho^2q(\phi)\right]_a^{a+1}-2J=-2J.
\end{align*}
The boundary terms vanish because $v$ is flat at the endpoints
and $q(0)=q(2)=0$. Substitution proves~\eqref{s:eq:Melnikov-exact}.
\end{proof}

The compatibility condition therefore selects
\[
 \lambda_*:=\frac I{4J}>0.
\]
We henceforth restrict $\lambda$ to
$\Lambda:=[\lambda_*/2,3\lambda_*/2]$. We will also use the radial quantity
\begin{equation}\label{s:eq:balance-defect}
 \mathfrak d_\lambda(\rho)=v(\rho)^2-4\lambda q(\phi(\rho)).
\end{equation}
Equivalently, the selected parameter satisfies the weighted balance
\begin{equation}\label{s:eq:integral-balance}
 \int_a^{a+1}\rho\,\mathfrak d_{\lambda_*}(\rho)\dd\rho=0.
\end{equation}
This integral balance will reappear in the proof of nonlocalizability.

\subsection{The compact equation and its first-order limit}

We seek $\psi=U+\varepsilon w$ with $w$ bounded and even in $z$.
The nonlinear inverse from Section~\ref{common:sec-compact} turns
this into a compact equation whose limit is the linear correction
equation. We work on the fixed real space
\[
\mathcal{X}:=\{w\in C_0(\overline D):w(x,-z)=w(x,z)\},\qquad
 \|w\|_{\mathcal{X}}=\|w\|_{L^\infty(D)}.
\]
Note that only the correction has zero boundary values, and so that $\psi$ has the boundary
values of $U$. All estimates below are uniform for $\lambda\in\Lambda$
and $0\le\varepsilon\le\varepsilon_0$.

By~\eqref{s:eq:endpoint-growth}, it follows that $\partial_tN_{\epsilon,\lambda} \ge1$ near $0$ and
$2$, uniformly in the parameters. Then, using
\eqref{common:eq-monotone-splitting} on $[0,2]$, with a fixed cutoff
$\xi$ equal to one on $[0,t_*]\cup[2-t_*,2]$ and supported in
$[0,2t_*)\cup(2-2t_*,2]$, we obtain $k_{\epsilon,\lambda}$ and $g_{\epsilon,\lambda}$. We also extend $k_{\epsilon,\lambda}$ constantly outside $[0,2]$.
Since we know that $N_{\epsilon,\lambda}(x,0)=N_{\epsilon,\lambda}(x,2)=0$, we get that
\begin{equation}\label{s:eq:abs-k-plateaus}
 k_{\varepsilon,\lambda}(x,t)=0,\quad t\le t_*,\qquad
 k_{\varepsilon,\lambda}(x,t)=g_{\varepsilon,\lambda}(x,2),
 \quad t\ge2-t_*.
\end{equation}
Next, we extend $g_{\epsilon,\lambda}$ by $g_{\epsilon,\lambda}(x,t)=t$ for $t<0$ and
$g_{\epsilon,\lambda}(x,t)=g_{\epsilon,\lambda}(x,2)+t-2$ for $t>2$. It is continuous, strongly monotone
and has at most linear growth. The smoothness and monotonicity of
$k_{\epsilon,\lambda}$, including smooth parameter dependence and uniform derivative
bounds, follow directly from the common construction.
Note that, at $\varepsilon=0$, $N_{0,\lambda}=f$ is independent of $x,\lambda$. We then
write the corresponding functions as $g_0=g_{0,\lambda}$ and $k_0 = k_{0,\lambda}$.

Now, we set $\mu_\varepsilon:=(1+\varepsilon x)^{-1}$, so that
$L_\varepsilon=\mu_\varepsilon^{-1}\operatorname{div}(\mu_\varepsilon\nabla)$.
For $F\in L^\infty(D)$, define $\mathcal T_{\varepsilon,\lambda}F=v$ by
\begin{equation*}
 -L_\varepsilon v+g_{\varepsilon,\lambda}(x,v)=F,
 \qquad v-U\in H_0^1(D).
\end{equation*}
The weights and their reciprocals have uniform smooth bounds, so
Proposition~\ref{common:prop-monotone-inverse} supplies these inverses,
their $W^{2,p}$ bounds, parameter continuity and collective compactness.
In particular,
\begin{equation}\label{s:eq:abs-inverse-Lipschitz}
 \|\mathcal T_{\varepsilon,\lambda}F-
        \mathcal T_{\varepsilon,\lambda}G\|_{L^\infty(D)}
 \le\|F-G\|_{L^\infty(D)}.
\end{equation}
Since
$0\le k_{\epsilon,\lambda}(x,t)\le g_{\epsilon,\lambda}(x,2)$, comparison with zero and two gives
\begin{equation}\label{s:eq:abs-range-preservation}
 0\le\mathcal T_{\varepsilon,\lambda}
       [k_{\varepsilon,\lambda}(x,v(x,z))]\le2,
\end{equation}
for every bounded measurable $v$. Thus, the continuations outside $[0,2]$
serve only to define the map. Its outputs stay in the physical range.

For the linear inverse, we take
\[
 V=g_0'(U)\ge1,\qquad \beta=k_0'(U),\qquad V-\beta=f'(U).
\]
The smooth function $\beta$ is supported in a compact subannulus of
$\Omega$, while $V\approx1+\log^2(1/d)/d^2$ near
$\partial\Omega$, with $d=\dist(\cdot,\partial\Omega)$.
Thus, Proposition~\ref{common:prop-singular-resolvent} gives
$\mathcal R=(-\Delta+V)^{-1}$ on the energy space
$\mathcal E=\mathcal E(\Omega,V)$, which agrees with
\eqref{s:eq:spec-planar-domain}. We extend its outputs by zero to $D$.
Both inverses preserve evenness in $z$ by uniqueness.

Next, we set
\begin{equation*}
 \mathcal K_\varepsilon(w,\lambda)
 =\frac{\mathcal T_{\varepsilon,\lambda}
           [k_{\varepsilon,\lambda}(x,U+\varepsilon w)]-U}{\varepsilon},
 \qquad w\in \mathcal{X}.
\end{equation*}
The boundary conditions and reflection symmetry ensure that this map
takes values in $\mathcal{X}$. However, identifying the limit as $\epsilon \downarrow 0$ is not so obvious. The candidate first-order limit is
\[
\mathcal K_0(w,\lambda) =\mathcal R[\beta w-E_{1,\lambda}].
\]
Formally, this corresponds to expanding the nonlinear fixed-point map at \(\varepsilon=0\) and \(\psi=U\). The following proposition justifies this expansion uniformly for bounded \(w\) and \(\lambda\in\Lambda\).

\begin{proposition}\label{s:thm:abs-scaled-limit}
For every $M_0<\infty$,
\begin{equation}\label{s:eq:abs-uniform-scaled-limit}
 \sup_{\substack{\norm{w}_{\mathcal{X}}\le M_0\\\lambda\in\Lambda}}
 \norm{\mathcal K_\varepsilon(w,\lambda)
           -\mathcal K_0(w,\lambda)}_{\mathcal{X}}\longrightarrow0,
               \quad \varepsilon\downarrow0.
\end{equation}
The family $\mathcal K_\varepsilon$ is continuous and collectively compact on
bounded subsets of $\mathcal{X}\times\Lambda$, for
$0\le\varepsilon\le\varepsilon_0$.
\end{proposition}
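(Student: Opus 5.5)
The plan is to reduce the uniform convergence \eqref{s:eq:abs-uniform-scaled-limit} to Proposition~\ref{common:prop-linearization} by a contradiction argument. Suppose it fails. Then there are $\varepsilon_j\downarrow0$, $\norm{w_j}_{\mathcal X}\le M_0$ and $\lambda_j\in\Lambda$ with $\norm{\mathcal K_{\varepsilon_j}(w_j,\lambda_j)-\mathcal K_0(w_j,\lambda_j)}\ge\epsilon_0>0$. After extraction we may assume $\lambda_j\to\lambda$ and $w_j\rightharpoonup^* w$ in $L^\infty(D)$. By \eqref{common:eq-R-weakstar}, $\mathcal K_0(w_j,\lambda_j)\to\mathcal R[\beta w-E_{1,\lambda}]$ uniformly, since $E_{1,\lambda_j}\to E_{1,\lambda}$ uniformly. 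It therefore suffices to show that $z_j:=\mathcal K_{\varepsilon_j}(w_j,\lambda_j)$ converges uniformly to the same limit.

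\textbf{Step 1: the quotient equation.} Set $\psi_j=\mathcal T_{\varepsilon_j,\lambda_j}[k_j(x,U+\varepsilon_jw_j)]$, where $k_j=k_{\varepsilon_j,\lambda_j}$ and $g_j=g_{\varepsilon_j,\lambda_j}$. Since $g_j=N_j+k_j$, the reference profile satisfies
\[
-L_{\varepsilon_j}U+g_j(x,U)=k_j(x,U)+E_{\varepsilon_j,\lambda_j}.
\]
Subtracting this from the equation for $\psi_j$ and dividing by $\varepsilon_j$ gives
\[
-L_{\varepsilon_j}z_j+c_jz_j=\eta_j,
\]
with secant slope $c_j=\int_0^1\partial_tg_j(x,U+s\varepsilon_jz_j)\dd s\ge1$. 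The source is
\[
\eta_j=\frac{k_j(x,U+\varepsilon_jw_j)-k_j(x,U)}{\varepsilon_j}-\frac{E_{\varepsilon_j,\lambda_j}}{\varepsilon_j}.
\]
Because $k_j$ is smooth with uniformly bounded derivatives, $\eta_j$ is bounded in $L^\infty$. Moreover, $\eta_j=\partial_tk_j(x,U)w_j-E_1+o(1)$ uniformly, so $\eta_j\rightharpoonup^*\beta w-E_{1,\lambda}$, using $\partial_tk_j(x,U)\to k_0'(U)=\beta$ uniformly.

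By \eqref{s:eq:abs-inverse-Lipschitz} and the fact that $U=\mathcal T[k(U)+E]$, we get $\norm{z_j}_\infty\le M_0+C$. Hence $\varepsilon_jz_j\to0$ uniformly. Also $z_j\in H_0^1(D)\cap C(\overline D)$, $\mu_{\varepsilon_j}\to1$ in $C^1$, and $\Omega_\tau=\Omega$ is fixed, so the weight hypotheses of Proposition~\ref{common:prop-linearization} hold. On compact subsets of $\Omega$ we have $U\in[t,2-t]$, so $c_j\to g_0'(U)=V$ uniformly there.

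\textbf{Step 2: hypotheses (i) and (ii).} For (ii): outside $\Omega$ we have $U\in\{0,2\}$. There $E_{\varepsilon}=0$ and, by \eqref{s:eq:abs-k-plateaus}, $k_j$ is constant for $t\le t_*$ or $t\ge2-t_*$. Once $\varepsilon_jM_0<t_*$ this gives $\eta_j=0$, so (ii) holds with $\sigma_j=0$.

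For (i), which I expect to be the main obstacle, a lower bound $c_j\ge M_\delta$ is needed near $\partial\Omega$. Near the plateau values, $\partial_tg_j=\partial_tN_j=A_{\varepsilon}f'+(A_\varepsilon-1)\lambda q''$, and by \eqref{s:eq:endpoint-growth} this is $\gtrsim\log^2(1/s)/s^2$ when $t=H(s)$ or $t=2-H(s)$. Take $X$ with $d=\dist(X,\partial\Omega)<2\delta$, inside or outside. The secant values $U+s\varepsilon_jz_j$ lie within $C\varepsilon_j$ of $U(X)\in[0,H(2\delta)]$, or of the symmetric interval near $2$. In the range $t\le H(3\delta)$ the derivative $f'(t)$ behaves like $\log^2(1/H^{-1}(t))/H^{-1}(t)^2$, which is decreasing in $t$ for small $t$. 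So for large $j$, when $C\varepsilon_j<H(3\delta)-H(2\delta)$, we get $\partial_tg_j\ge c\log^2(1/(3\delta))/(3\delta)^2$ on the whole segment.

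Negative values are harmless for the lower bound: there $g_j$ has slope $1$, but any sub-interval of negative values has length at most $C\varepsilon_j$. This is exactly where care is needed. The average over the segment must still be large, so one uses that $c_j\ge1$ everywhere and that the portion of the segment with $t$ in $[0,H(3\delta)]$ dominates. If this averaging is too delicate, I would instead note that by \eqref{s:eq:abs-range-preservation} $\psi_j\in[0,2]$, so the whole secant segment between $U$ and $\psi_j$ lies in $[0,2]$ and never enters the extension region. This is the route I would take first.

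With (i) and (ii) in place, Proposition~\ref{common:prop-linearization} gives $z_j\to\mathcal R[\beta w-E_{1,\lambda}]$, which contradicts the assumed gap.

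\textbf{Step 3: continuity and collective compactness.} For $\varepsilon>0$ these follow from Proposition~\ref{common:prop-monotone-inverse}: $(w,\lambda)\mapsto k_{\varepsilon,\lambda}(x,U+\varepsilon w)$ is continuous into $L^\infty$, and the family $\mathcal T$ is collectively compact and jointly continuous. Uniformity of the compactness down to $\varepsilon\to0$ needs more, because of the factor $1/\varepsilon$. For this I use the uniform limit \eqref{s:eq:abs-uniform-scaled-limit} together with compactness of $\mathcal R$ from Proposition~\ref{common:prop-singular-resolvent}. A sequence with $\varepsilon_j\to0$ has images within $o(1)$ of the precompact set $\mathcal K_0(\text{bounded}\times\Lambda)$, while sequences with $\varepsilon_j\to\varepsilon>0$ are handled by the fixed-$\varepsilon$ compactness. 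Continuity of $\mathcal K_0$ follows from \eqref{common:eq-R-bound} and the uniform continuity of $\lambda\mapsto E_{1,\lambda}$.
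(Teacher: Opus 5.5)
Your proposal is correct and follows essentially the paper's route: you form the quotient equation with a secant slope of $g$, check hypotheses (i)--(ii) of Proposition~\ref{common:prop-linearization} (the exterior source vanishes exactly, and the collar lower bound comes from $0\le\psi\le2$ via range preservation), argue uniformity by contradiction, and get collective compactness near $\varepsilon=0$ from compactness of $\mathcal R$ plus the uniform limit. The differences are cosmetic: you let the threshold on $j$ depend on $\delta$ instead of using the scale comparison~\eqref{common:eq-inverse-scale}; your bound on $z_j$ should read $C(M_0+1)$; and where $z_j=0$ at a plateau value your integral formula for $c_j$ is undefined, so set $c_j=1$ there as the paper does.
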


\begin{proof}
Fix $M_0$ and write
\[
 v= \mathcal T_{\varepsilon,\lambda}
       [k_{\varepsilon,\lambda}(x,U+\varepsilon w)],
 \qquad \zeta=\frac{v-U}{\varepsilon}.
\]
By~\eqref{s:eq:abs-range-preservation}, we know $0\le v\le2$. Moreover, observe that
\[
 U=\mathcal T_{\varepsilon,\lambda}
       [k_{\varepsilon,\lambda}(x,U)+E_{\varepsilon,\lambda}].
\]
The Lipschitz estimate~\eqref{s:eq:abs-inverse-Lipschitz} and
$\norm{E_{\varepsilon,\lambda}}_{L^{\infty}(D)}\le C\varepsilon$ therefore give
$\norm{\zeta}_{L^{\infty}(D)}\le C(M_0+1)=:C_0$.
Subtracting the two equations yields
\begin{equation*}
 -L_\varepsilon\zeta+c_{\varepsilon,\lambda}\zeta
       =\eta_{\varepsilon,\lambda,w},\qquad
 \zeta|_{\partial D}=0,
\end{equation*}
where $c_{\varepsilon,\lambda}\ge1$ is the secant slope of
$g_{\varepsilon,\lambda}(x,\cdot)$ between $v$ and $U$, and
\begin{equation}\label{s:eq:abs-eta-definition}
 \eta_{\varepsilon,\lambda,w}
 =\frac{k_{\varepsilon,\lambda}(x,U+\varepsilon w)
                    -k_{\varepsilon,\lambda}(x,U)}{\varepsilon}
       -\frac{E_{\varepsilon,\lambda}}{\varepsilon}.
\end{equation}
At coincident values in $(0,2)$ we use the derivative, and at coincident
endpoint values we set $c_{\varepsilon,\lambda}=1$. This does not
affect the product $c_{\varepsilon,\lambda}\zeta$, which is a bounded
function for each fixed $\varepsilon$. Taylor's formula and
\eqref{s:eq:reference-defect} then imply that
\begin{equation}\label{s:eq:abs-eta-expansion}
 \norm{\eta_{\varepsilon,\lambda,w}-\beta w+E_{1,\lambda}}_{L^{\infty}(D)}
 \le C_{M_0}\varepsilon.
\end{equation}

We now apply Proposition~\ref{common:prop-linearization} on the enlarged domain $D$, taking the active annulus to be $\Omega$, and the limiting potential to be $V = g_0'(U)$. The limiting inverse acts on $\Omega$ and its outputs are extended by zero to $D$.  On compact subsets of $\Omega$, the values of $U$
stay away from $0$ and $2$, so
$c_{\varepsilon,\lambda}\to V=g_0'(U)$ locally uniformly.
To check the collar assumption, fix $h>0$ so small that both two-sided
$2h$-collars lie in $D$, and take $\varepsilon C_0\le H(2h)$. In each collar,
the endpoint distances of both $U$ and $v$ lie in
$[0,2H(2h)]$. By~\eqref{common:eq-inverse-scale} and
the endpoint bound~\eqref{s:eq:endpoint-growth}, every nontrivial secant
satisfies
\[
 c_{\varepsilon,\lambda}\ge
 c\,\frac{\log^2(1/h)}{h^2},
\]
for some uniform constant $c > 0$.
Here,  $\partial_tg_{\epsilon,\lambda}=\partial_tN_{\epsilon,\lambda}$ near the endpoints, the factor
$A_\varepsilon$ is uniformly positive, and the term involving $q''$
is bounded. The same bound holds for a secant ending at $0$ or $2$
by integration of the locally integrable derivative.
On the plateaus $D\setminus\Omega$, both
$E_{\varepsilon,\lambda}$ and the numerator in
\eqref{s:eq:abs-eta-definition} vanish, the latter by
\eqref{s:eq:abs-k-plateaus}. Thus, the exterior source is exactly zero.

All hypotheses of Proposition~\ref{common:prop-linearization} now
hold along any sequence $\varepsilon_j\downarrow0$ with bounded
$w_j$ and $\lambda_j\in\Lambda$. Hence, it follows that
\[
 \norm{\mathcal K_{\varepsilon_j}(w_j,\lambda_j)
       -\mathcal R\eta_{\varepsilon_j,\lambda_j,w_j}}_{\mathcal{X}}\to0, \quad \textup{as } j \to \infty.
\]
Combining this with~\eqref{s:eq:abs-eta-expansion} and boundedness of
$\mathcal R$ proves~\eqref{s:eq:abs-uniform-scaled-limit}, by contradiction
if the convergence were not uniform on the indicated bounded set.
Compactness of $\mathcal R$ also gives collective compactness as
$\varepsilon\downarrow0$. For positive $\varepsilon$, continuity and
compactness follow from Proposition~\ref{common:prop-monotone-inverse}
and the smooth composition with $k_{\varepsilon,\lambda}$.
The uniform limit then gives continuity at $\varepsilon=0$.
\end{proof}

\subsection{Phase normalization and existence}

We fix the remaining translation by requiring orthogonality to $Z$.
Together with the free swirl parameter, this makes the limiting
equation invertible. More precisely, we set
\[
 \mathcal B w=\mathcal R(\beta w),\qquad
 \mathcal M=\Id-\mathcal B,\qquad
 P(w)=\int_\Omega Zw\dd x\dd z,
\]
and seek zeros of
\[
\mathcal{F}_\epsilon: \mathcal{X} \times \Lambda \to \mathcal{X} \times \RR, \qquad \mathcal F_\varepsilon(w,\lambda)
 =\bigl(w-\mathcal K_\varepsilon(w,\lambda),P(w)\bigr).
\]
Before going any further, let us point out that this map can be written as identity minus the compact map
$(w,\lambda)\mapsto(\mathcal K_\varepsilon(w,\lambda),\lambda-P(w))$. Then, we prove the following:

\begin{proposition}
\label{s:prop:degree-existence}
The affine map $\mathcal F_0$ has a unique zero
$(w_*,\lambda_*)\in \mathcal{X}\times\Lambda$.
For every sufficiently small $\varepsilon>0$, there are
$(w_\varepsilon,\lambda_\varepsilon)\in \mathcal{X}\times\operatorname{int}\Lambda$ such that
\begin{equation}\label{s:eq:degree-solutions}
 w_\varepsilon=\mathcal K_\varepsilon(w_\varepsilon,\lambda_\varepsilon),
 \qquad P(w_\varepsilon)=0,
\end{equation}
and they can be selected to satisfy
\begin{equation}\label{s:eq:w-lambda-limit}
 w_\varepsilon\to w_*\quad\text{in }\mathcal{X},\qquad
 \lambda_\varepsilon\to\lambda_*.
\end{equation}
Then, $\psi_\varepsilon=U+\varepsilon w_\varepsilon$ solves
\eqref{s:eq:GS-centered}, has the boundary values of $U$, and satisfies
$0\le\psi_\varepsilon\le2$ and
$\norm{\psi_\varepsilon-U}_{C(\overline D)}\le C\varepsilon$.
\end{proposition}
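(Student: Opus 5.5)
We first analyze the limiting affine map. Since $\mathcal K_0(w,\lambda)=\mathcal B w-\mathcal R E_{1,\lambda}$, a zero of $\mathcal F_0$ satisfies $\mathcal M w=-\mathcal R E_{1,\lambda}$ together with $P(w)=0$. Applying $-\Delta+V$, the first equation reads $w\in\mathcal E$ and $\mathcal L w=(-\Delta+V-\beta)w=-E_{1,\lambda}$, using $V-\beta=f'(U)$. Indeed, $w=\mathcal B w-\mathcal R E_{1,\lambda}$ is exactly the weak formulation of $\mathcal L w=-E_{1,\lambda}$ in $\mathcal E$, extended by zero, and conversely. By Proposition~\ref{s:prop:spec-planar-kernel}, on even functions this is solvable if and only if $\int_\Omega ZE_{1,\lambda}=0$. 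By Lemma~\ref{s:lem:Melnikov} this forces $\lambda=\lambda_*$. The solution is then unique modulo $Z$, and $P(w)=0$ fixes it. Hence $\mathcal F_0$ has the unique zero $(w_*,\lambda_*)$, with $w_*$ bounded by \eqref{common:eq-R-bound}.

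Next we show that this zero is nondegenerate, so that the Leray--Schauder index of $\mathcal F_0$ is $\pm1$. The map $\mathcal F_0$ is affine, equal to identity minus a compact map. Its linear part is $(w,\lambda)\mapsto(\mathcal M w+\lambda\mathcal R E_{1,1}',P(w))$, where $E_{1,\lambda}$ is affine in $\lambda$ with $\partial_\lambda E_{1,\lambda}=2xq'(U)$. Suppose $\mathcal M w+\mu\,\mathcal R(2xq'(U))=0$ and $P(w)=0$. Then $\mathcal L w=-2\mu xq'(U)$. Pairing with $Z$ gives $\mu\cdot(-4\pi J)=0$ by the computation in Lemma~\ref{s:lem:Melnikov}, so $\mu=0$. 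Then $w\in\ker\mathcal L$ is even, so $w=cZ$, and $P(w)=0$ gives $w=0$. By the Fredholm alternative for identity minus compact, the linear part is an isomorphism of $\mathcal X\times\RR$. Therefore the degree of $\mathcal F_0$ on a small ball $\mathcal U$ around $(w_*,\lambda_*)$, with $\overline{\mathcal U}\subset\mathcal X\times\operatorname{int}\Lambda$, is nonzero.

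Now we pass to small $\varepsilon$ by homotopy invariance. Proposition~\ref{s:thm:abs-scaled-limit} gives uniform convergence $\mathcal K_\varepsilon\to\mathcal K_0$ on bounded sets, together with collective compactness and continuity. The map $\mathcal F_0$ has no zeros on $\partial\mathcal U$, and $|\mathcal F_0|\ge c>0$ there, since $\mathcal F_0$ is a linear isomorphism composed with a translation. Hence for small $\varepsilon$ the straight-line homotopy between $\mathcal F_0$ and $\mathcal F_\varepsilon$ has no zeros on $\partial\mathcal U$. The degree is preserved, so zeros $(w_\varepsilon,\lambda_\varepsilon)\in\mathcal U$ exist. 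Shrinking the radius of $\mathcal U$ along $\varepsilon\downarrow0$ yields \eqref{s:eq:w-lambda-limit}. The main delicate point is the uniform lower bound on $\partial\mathcal U$, which requires an honest isomorphism on the space $\mathcal X$ of continuous functions, not merely on $L^2$. This follows from compactness of $\mathcal B$ on $\mathcal X$ (Proposition~\ref{common:prop-singular-resolvent}) and the injectivity shown above.

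Finally, $w_\varepsilon=\mathcal K_\varepsilon(w_\varepsilon,\lambda_\varepsilon)$ means $\psi_\varepsilon=\mathcal T_{\varepsilon,\lambda_\varepsilon}[k(x,\psi_\varepsilon)]$, that is, $-L_\varepsilon\psi_\varepsilon+g(x,\psi_\varepsilon)=k(x,\psi_\varepsilon)$ with $\psi_\varepsilon-U\in H^1_0(D)$. By \eqref{s:eq:abs-range-preservation} we have $0\le\psi_\varepsilon\le2$. On $[0,2]$ we have $g-k=N$, so $\psi_\varepsilon$ solves \eqref{s:eq:GS-centered}. Lastly, $\|\psi_\varepsilon-U\|=\varepsilon\|w_\varepsilon\|\le C\varepsilon$, because $w_\varepsilon$ stays bounded.
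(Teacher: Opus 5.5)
Your proposal is correct and follows essentially the same route as the paper. You prove injectivity of the linear part on $\mathcal X\times\RR$ by pairing with $Z$ and using the Melnikov derivative $-4\pi J$, upgrade it to bounded invertibility by the Fredholm alternative, run a straight-line Leray--Schauder homotopy on a small ball, and recover the Grad--Shafranov equation from the range bound and $g-k=N$. The only differences are cosmetic: you first locate the zero through the $L^2$ solvability condition, which also requires checking that the solution lies in $\mathcal X$, whereas the paper reads existence and uniqueness directly off the bijectivity of the linear part. Likewise, you obtain \eqref{s:eq:w-lambda-limit} by shrinking the balls, whereas the paper uses $\norm{\mathcal F_0(w_\varepsilon,\lambda_\varepsilon)-\mathcal F_\varepsilon(w_\varepsilon,\lambda_\varepsilon)}\to0$ together with bounded invertibility.
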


\begin{proof}
By Proposition~\ref{s:prop:spec-planar-kernel} and uniqueness of the
coercive inverse,
\[
 \ker\mathcal M=\operatorname{span}\{Z\},\qquad
 \mathcal R(\beta Z)=Z.
\]
Indeed, a fixed point of $\mathcal B$ belongs to $\mathcal E$, vanishes
outside $\Omega$, and solves $\mathcal Lw=0$. Now,  we define
$\Theta(y)=\int_\Omega\beta Zy\dd x\dd z$, and observe that self-adjointness of
$\mathcal R$ on $L^2(\Omega)$ gives
\[
 \Theta(\mathcal Mw)=0,\qquad
 \Theta(\mathcal R E)=\int_\Omega ZE\dd x\dd z.
\]
Then, we write $E_{1,\lambda}=E_{1,0}+\lambda E^{(q)}$, where
$E^{(q)}=2xq'(U)$, and stress that the map $\mathcal F_0$ is affine and
$$
\mathcal{F}_0(w,\lambda) =  \mathscr A(w,\lambda) + (\mathcal{R} E_{1,0},0),
$$
with
\[
 \mathscr A(w,\nu)=
 \bigl(\mathcal Mw+\nu\mathcal R E^{(q)},P(w)\bigr).
\]
Note that $\mathscr{A}$ is identity minus a compact operator on $\mathcal{X}\times\RR$, so its
Fredholm index is zero. If $\mathscr A(w,\nu)=0$, applying $\Theta$
and using~\eqref{s:eq:Melnikov-exact} gives $-4\pi J\nu=0$.
Thus $\nu=0$, $w$ is a multiple of $Z$, and $P(w)=0$ forces $w=0$.
Hence, $\mathscr A$ is bijective, and the bounded inverse theorem shows that it is boundedly invertible. The affine map
$\mathcal F_0$ has therefore a unique zero, whose compatibility condition gives
$\lambda=\lambda_*\in\operatorname{int}\Lambda$.

Next, choose a ball $\mathbb B$ of radius $r_0$ about $(w_*,\lambda_*)$ with
closure in $\mathcal{X}\times\operatorname{int}\Lambda$. On its boundary $\partial \mathbb{B}$, i.e. for $(w,\lambda)$ such that
$$
\|(w-w_*,\lambda-\lambda_*)\|_{\mathcal{X} \times \RR} = r_0,
$$
it follows that
\[
 \norm{\mathcal F_0(w,\lambda)}_{\mathcal{X} \times \RR}
 =\norm{\mathscr A(w-w_*,\lambda-\lambda_*)}_{\mathcal{X} \times \RR}
 \ge \frac{r_0}{\norm{\mathscr A^{-1}}_{\mathcal{L}(\mathcal{X}\times \RR)}} > 0.
\]
The uniform convergence in~\eqref{s:eq:abs-uniform-scaled-limit} then makes the straight homotopy from $\mathcal F_0$ to $\mathcal F_\varepsilon$, i.e.
$$
H_t(w,\lambda) = (1-t) \mathcal{F}_0(w,\lambda) + t \mathcal{F}_\epsilon(w,\lambda), \qquad 0 \leq t \leq 1,
$$
nonvanishing on $\partial\mathbb B$ for small enough $\varepsilon >0$. Its Leray--Schauder degree
is therefore the nonzero degree of the invertible affine map, so each
$\mathcal F_\varepsilon$ has a zero in $\mathbb B$. We choose one zero $(w_\epsilon, \lambda_\epsilon) \in \mathbb{B}$ of $\mathcal{F}_\epsilon$ for each $0 < \varepsilon \ll 1$. At these zeros, it follows that
\[
 \norm{\mathscr A(w_\varepsilon-w_*,\lambda_\varepsilon-\lambda_*)}_{\mathcal{X} \times \RR}
 =\norm{\mathcal F_0(w_\varepsilon,\lambda_\varepsilon)
       -\mathcal F_\varepsilon(w_\varepsilon,\lambda_\varepsilon)}_{\mathcal{X} \times \RR} \to0,
\]
which proves convergence of the selected family. 

Finally, observe that
\eqref{s:eq:degree-solutions} is precisely
$\psi_\varepsilon=\mathcal T_{\varepsilon,\lambda_\varepsilon}
[k_{\varepsilon,\lambda_\varepsilon}(x,\psi_\varepsilon)]$.
The range estimate~\eqref{s:eq:abs-range-preservation} gives
$0\le\psi_\varepsilon\le2$, where $g_{\epsilon,\lambda}-k_{\epsilon,\lambda}=N_{\epsilon,\lambda}$, and thus the original Grad--Shafranov equation and the remaining assertions.
\end{proof}

\subsection{Smooth localization and Euler reconstruction}\label{s:sec:localization}

We turn the solutions of Proposition~\ref{s:prop:degree-existence}
into global flows by creating constant collars at the artificial boundary
and applying the a priori regularity theorem at every contact with a
plateau. Write $d_-=a-\delta$ and $d_+=a+1+\delta$, so that
$D=\{d_-<\rho<d_+\}$. The inverse equation gives uniform
$W^{2,p}(D)$ bounds for every finite $p$, since its sources and solutions
are uniformly bounded. In particular, the solutions have uniform
$C^{1,\alpha}(\overline{D})$ bounds for every $\alpha\in(0,1)$.
Recall that the chosen Bernoulli derivative and normalization give
\[
 B_{\varepsilon,\lambda}(t)
 =\varepsilon^2\int_2^t\bigl(f(s)+\lambda q'(s)\bigr)\dd s,
\]
and note that this function belongs to $C^1([0,2])$, with derivative zero at both endpoints, and extends constantly outside that interval as a $C^1$ function.

\begin{proposition}\label{s:prop:re-euler}
For all sufficiently small $\varepsilon>0$, let
$\psi_\varepsilon$ be the solutions of Proposition~\ref{s:prop:degree-existence}. Then, there exists $\tau > 0$, independent of $\epsilon$, such that
\[
 \psi_\varepsilon=0,\quad d_-<\rho<d_-+\tau,\qquad
 \psi_\varepsilon=2,\quad d_+-\tau<\rho<d_+.
\]
Their extensions by $0$ to $\{\rho\le d_-\}$ and by $2$ to
$\{\rho\ge d_+\}$ are smooth, with all positive-order derivatives
vanishing on both plateau sets, and satisfy:
\begin{enumerate}
\item $ \psi_\varepsilon-2\in C_c^\infty(\RR^2)$ with supports contained in one fixed compact set. 
\item For every integer $q \geq 0$, $ \norm{\psi_\varepsilon-U}_{C^q(\RR^2)}\longrightarrow0$ as $\varepsilon\downarrow 0$.
\end{enumerate}
Furthermore, set $R=\varepsilon^{-1}$ and
$\Psi_\varepsilon(r,z)=\psi_\varepsilon(r-R,z)$. Then, the fields $u_\varepsilon,p_\varepsilon$ defined by
\eqref{s:eq:re-velocity}, with
$F=F_{\lambda_\varepsilon}$ and
$B=B_{\varepsilon,\lambda_\varepsilon}$, and extended by zero on the axis,
are smooth and compactly supported solutions to~\eqref{eq:euler}.
The velocities are axisymmetric with nonzero swirl, and their supports obey
\eqref{eq:spatial-support}.
\end{proposition}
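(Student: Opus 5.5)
The proof splits into four parts: exact collars at the artificial boundary, smoothness across every contact with a plateau, convergence in $C^q$, and reconstruction of the Euler flow.

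\textbf{Collars.} At the inner boundary $\{\rho=d_-\}$ we apply Lemma~\ref{common:lem-exact-collar} to $v=\psi_\varepsilon$. The operator is $\mathcal L=L_\varepsilon=\mu_\varepsilon^{-1}\Div(\mu_\varepsilon\nabla)$ with $\mu_\varepsilon=(1+\varepsilon x)^{-1}$. We take $A(X,t)=g_{\varepsilon,\lambda_\varepsilon}(x,t)-k_{\varepsilon,\lambda_\varepsilon}(x,t)=N_{\varepsilon,\lambda_\varepsilon}(x,t)$, restricted to $0\le t\le t_*$. On this range $k=0$ by \eqref{s:eq:abs-k-plateaus}, and $\partial_tN\ge1$, so $A$ is nondecreasing with $A(X,0)=0$. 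The absorption condition \eqref{common:eq-exact-collar-absorption} comes from \eqref{eq:inner-endpoint-formulas}: since $A_\varepsilon\ge1/2$ and the $q'$ term is $O(H(\sigma))=o(H''(\sigma))$, we get $A(X,H(\sigma))\ge\tfrac12 H''(\sigma)-C H'(\sigma)-CH(\sigma)\ge\tfrac14H''(\sigma)$ for small $\sigma$, uniformly in $\varepsilon$ and $\lambda\in\Lambda$. The required smallness of $\psi_\varepsilon$ on a collar of fixed width holds for small $\varepsilon$. Indeed, $\|\psi_\varepsilon-U\|_{C(\overline D)}\le C\varepsilon$ and $U=0$ on $\{d_-<\rho<a\}$. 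The remark after Lemma~\ref{common:lem-exact-collar} makes $\delta,\eta_0$ uniform, which yields $\tau$. For the outer boundary we apply the same lemma to $v=2-\psi_\varepsilon$, using the second endpoint formula; it satisfies $-L_\varepsilon v+\widetilde A(x,v)=0$ with $\widetilde A(x,t)=-N(x,2-t)$.

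\textbf{Regularity.} We now extend $\psi_\varepsilon$ by $0$ and by $2$. On $V=\{\rho<d_-+\tau\}$ the extended function is constant, hence smooth. On the region $\{d_-<\rho<d_+\}$ we apply Theorem~\ref{common:lem-flat-regularity} twice. First to $v=\psi_\varepsilon$ on the open set $\{\psi_\varepsilon<2-t_*/2\}$, with drift $b=-\varepsilon(1+\varepsilon x)^{-1}e_x$ and $\mathcal F(X,t)=N_{\varepsilon,\lambda}(x,t)$. Then to $v=2-\psi_\varepsilon$ on $\{\psi_\varepsilon>t_*/2\}$, with $\mathcal F(X,t)=-N(x,2-t)$. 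The near-linear hypothesis \eqref{common:eq-nearlinear} follows from \eqref{s:eq:nearlinear-f}, together with smooth $x$-dependence through $A_\varepsilon$ and the smooth polynomial $q'$. Because $N$ is smooth for $t\in(0,2)$ and extends as $0$ near each plateau, $\mathcal F$ is continuous on $[0,\infty)$ as required.

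This gives smoothness with flat contact everywhere. By the uniform version in the remark after the theorem, it also gives $C^q$ bounds on compact sets that are uniform in $\varepsilon$. These neighborhoods cover $\RR^2$, with constant values in the collars and outside $D$.

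\textbf{Convergence and support.} Uniform $C^{q+1}$ bounds, Arzel\`a--Ascoli, and $C^0$ convergence to $U$ give $C^q$ convergence along the full family. The functions $\psi_\varepsilon-2$ are supported in $\{\rho\le d_+-\tau\}$, a fixed compact set.

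\textbf{Euler reconstruction.} We set $\Psi_\varepsilon(r,z)=\psi_\varepsilon(r-R,z)$. The support lies in $r\ge R-d_+>0$, so the factors $1/r$ are smooth there. Near the axis $\Psi_\varepsilon\equiv0$ and $F(0)=0$, so $u\equiv0$, which gives the smooth zero extension. Outside the torus we have $\Psi=2$, $F(2)=0$, $B(2)=0$ and $u=0$, so $p=0$ and both fields are compactly supported. For smoothness of $p=B(\Psi)-\tfrac12|u|^2$, note that $B$ is only $C^1$ at the endpoints. However, $\Psi$ is smooth and flat wherever it reaches $0$ or $2$, so $B\circ\Psi$ is smooth: on $\{0<\Psi<2\}$ this is composition of smooth maps, and at contacts we use the flatness estimates \eqref{common:eq-jets-flat} together with the polynomial growth of the derivatives of $B$ coming from \eqref{s:eq:nearlinear-f}. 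The Euler equations follow from \eqref{s:eq:GS-centered}, which is equivalent to \eqref{s:eq:GS-physical}, and the identity for $u\times\curl u$. The swirl is nonzero because $\lambda_\varepsilon>0$ and $0<\Psi<2$ somewhere. The support bound \eqref{eq:spatial-support} follows from the collar width $\tau$.

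The main obstacle is the smoothness of $p$ at the contact set, namely controlling $B\circ\Psi$ there. I would handle it by applying Theorem~\ref{common:lem-flat-regularity}-type jet estimates together with the bound $|B^{(j)}(t)|\le Ct^{2-j-\eta}$.
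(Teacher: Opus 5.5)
Your collar, regularity, convergence and support steps follow the paper's proof closely. However, the step you single out as the main obstacle, smoothness of $p$, does not close as you describe it.

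\textbf{The gap in the pressure step.} In the Fa\`a di Bruno expansion of $D^q(B\circ\Psi)$, the terms $B^{(j)}(\Psi)\prod_{i=1}^jD^{\beta_i}\Psi$ with $j\ge2$ contain a factor that blows up as a negative power of the \emph{value}. By your own bound, $|B^{(j)}(t)|\le Ct^{2-j-\eta}$; already $B''=\varepsilon^2(f'+\lambda q'')$ is unbounded at the plateaus. The estimate \eqref{common:eq-jets-flat} only bounds derivatives of $\Psi$ by powers of $\dist(X,Z)$, and $\Psi$ can be smaller than every power of that distance. On the reference profile itself, $\Psi\approx H(d)=d^{\log(1/d)}$, so $H(d)^{2-j}d^{jN}\to\infty$ for every $j\ge3$ and every $N$. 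Distance-based flatness therefore gives no control of these terms.

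What you actually need is a bound on derivatives in terms of the value, which the statement of Theorem~\ref{common:lem-flat-regularity} does not provide. Step~2 of its proof does give $|D^qv|\le C_{q,\theta}\,v^{1-q\theta/2}$ for small values and every small $\theta>0$. With this, each term is $O(\Psi^{2-\eta-q\theta/2})\to0$, and your composition argument goes through. Note that the recorded consequence \eqref{common:eq-derivative-small-value}, $|D^qv|\le C_qv^{1/2}$, is still too weak once $j\ge4$.

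\textbf{How the paper avoids this.} The paper needs none of this analysis. Since $B_{\varepsilon,\lambda}\in C^1$, the pressure $p=B(\Psi)-\tfrac12|u|^2$ is $C^1(\RR^3)$. The Euler equation then holds classically and gives $\nabla p=-(u\cdot\nabla)u\in C^\infty$, so $p$ is smooth. Equivalently, \eqref{s:eq:GS-physical} shows that $B'(\Psi)=r^{-2}\bigl(\Delta_*\Psi+\lambda q'(\Psi)\bigr)$ is a smooth function of position.

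\textbf{Smaller corrections.}
\begin{enumerate}
\item Near the axis $\Psi_\varepsilon\equiv2$, not $0$, because there $\sqrt{(r-R)^2+z^2}\ge R-r>d_+$; the zero plateau is the core of the torus. Your conclusion survives because $F(2)=B(2)=0$.
\item When you apply Theorem~\ref{common:lem-flat-regularity} on $\{\psi_\varepsilon<2-t_*/2\}$ with $\mathcal F=N$, you must first modify $\mathcal F$ outside the attained range, since $N(x,\cdot)$ is not smooth at $t=2$. The same applies to $-N(x,2-t)$ near $t=2$.
\item To invoke the uniform remark you need open sets independent of $\varepsilon$. For example, use $\{U<2-t_*\}$ and $\{U>t_*\}$, which lie inside your sets once $C\varepsilon<t_*/2$.
\end{enumerate}
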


\begin{proof}
\step{Exact collars.}
Since $q'(H(s))=-q'(2-H(s)) =O(H(s))$ for $0 < s \ll 1$, while both
$H''(s)/H'(s) \to \infty$ and $H''(s)/H(s) \to \infty$ as $s \downarrow 0$, the endpoint formulas give
\begin{align*}
 N_{\varepsilon,\lambda}(x,H(s))\ge c_0H''(s), \qquad -N_{\varepsilon,\lambda}(x,2-H(s))\ge c_0H''(s), \qquad 0 < s \ll 1,
\end{align*}
uniformly in $0 < \epsilon \ll 1$ and $\lambda \in \Lambda$. Moreover, both $N_{\epsilon,\lambda}(x,t)$ and $-N_{\epsilon,\lambda}(x,2-t)$ vanish at $t = 0$ and, for every $(x,z) \in \overline{D}$, have $t$-derivative at least one on a fixed interval $(0,t_0)$. Hence, \eqref{common:eq-exact-collar-absorption} holds uniformly on $\overline{D}$ for the functions $N_{\varepsilon,\lambda_\varepsilon}(x,t)$ and $-N_{\varepsilon,\lambda_\varepsilon}(x,2-t)$.

Next, we choose collars $C_{\pm}$ of $\partial D$ disjoint from $\overline\Omega$, set
$v_\varepsilon^{-}=\psi_\varepsilon$ in $C_{-}$ and
$v_\varepsilon^{+}=2-\psi_\varepsilon$ in $C_{+}$, and observe that they are nonnegative, have zero boundary trace, and satisfy $\norm{v_\varepsilon^{\pm}}_{L^{\infty}(C_{\pm})} \leq \norm{\psi_\epsilon-U}_{C(\overline{D})} \le C\varepsilon$.

Since the drift of $L_\varepsilon$ is uniformly bounded, and the curvatures of the corresponding boundary circles are uniformly bounded, Lemma~\ref{common:lem-exact-collar} implies that $v_{\epsilon}^{\pm} = 0$ in $C_{\pm}' \subset C_{\pm}$.

\step{Smooth extension and convergence.} We extend \(\psi_\varepsilon\) by \(0\) to \(\{\rho\le d_-\}\) and by \(2\) to \(\{\rho\ge d_+\}\), keeping the same notation. Since \(\psi_\varepsilon\) is constant in $C_{\pm}'$, and \(N_{\varepsilon,\lambda_\varepsilon}(x,0)=N_{\varepsilon,\lambda_\varepsilon}(x,2)=0\), the extension solves the same equation on a fixed bounded neighborhood \(D'\) of \(\overline D\).
The estimates~\eqref{s:eq:nearlinear-f} apply to \(f(t)\) and \(-f(2-t)\), and the polynomial terms \(q'(t)\) and \(-q'(2-t)\) satisfy the same estimates, since they vanish at \(t=0\). Moreover, the coefficients of the equation have uniform bounds in every \(C^m(\overline{D'})\) norm.

Applying Theorem \ref{common:lem-flat-regularity}  to \(\psi_\varepsilon\) near the plateau value \(0\), and to \(2-\psi_\varepsilon\) near the plateau value \(2\), gives smoothness and flatness at both plateau sets. The uniform \(C^{1,\alpha}(\overline{D'})\) bounds, which immediately follow from the uniform $C^{1,\alpha}(\overline{D})$ bounds, allow these neighborhoods to be chosen independently of \(\varepsilon\). Away from the plateau values, ordinary elliptic regularity applies. Thus, for every integer \(q\ge0\),
\[
\|\psi_\varepsilon\|_{C^q(\overline{D'})}\le C_q,
\]
and all positive-order derivatives vanish wherever \(\psi_\varepsilon=0\) or \(\psi_\varepsilon=2\).
The extensions consequently belong to \(C^\infty(\mathbb R^2)\) and
$
\operatorname{supp}(\psi_\varepsilon-2)
\subset\{\rho\le d_+\}.
$

Finally, by Proposition \ref{s:prop:degree-existence}, \(\psi_\varepsilon-U\to0\) uniformly on \(\mathbb R^2\) as $\epsilon \downarrow 0$. Together with the uniform \(C^{q+1}(\overline{D'})\) bounds and compactness, this gives
\[
\|\psi_\varepsilon-U\|_{C^q(\mathbb R^2)}
\longrightarrow0
\quad\text{as }\varepsilon\downarrow0, \quad \textup{for every integer } q \geq 0.
\]

\step{The Euler fields.}
Since $F_{\lambda_\epsilon}(0)=F_{\lambda_\epsilon}(2)=0$, the smooth extension and constant collars give a smooth velocity supported
strictly inside the toroidal annulus in~\eqref{eq:spatial-support}.
Moreover, the region where $\Psi_\varepsilon$ differs from $2$ lies in
$r>R-d_+>0$. Hence, the velocity vanishes near the axis and its
cylindrical formula extends smoothly there. On the other hand, the uniform convergence to $U$ as $\epsilon \downarrow 0$ gives points with
$0<\Psi_\varepsilon<2$.  The swirl is therefore nonzero at those points because
$\lambda_\varepsilon>0$.

By the choice of the scalar data, equation~\eqref{s:eq:GS-centered}
is exactly~\eqref{s:eq:GS-physical}. The reconstruction in
Subsection~\ref{s:sec:scalar-reduction} therefore gives \eqref{eq:euler} with
the pressure given in \eqref{s:eq:re-velocity}.  

Finally, observe that the pressure is initially $C^1(\RR^3)$ because $B_{\epsilon,\lambda}$ is $C^1$. Using \eqref{eq:euler}, we see that $\nabla p_\epsilon=-(u_\epsilon\cdot\nabla)u_\epsilon\in C^\infty(\RR^3; \RR^3)$, so $p_\epsilon$ is actually smooth. Also, outside the bounded solid torus \(\{(r-R)^2+z^2\le d_+^2\}\), we have \(\Psi_\varepsilon=2\) and \(u_\varepsilon=0\). The normalization \(B_{\varepsilon,\lambda_\varepsilon}(2)=0\) therefore gives \(p_\varepsilon=0\) there, proving compact support.
\end{proof}

\subsection{The obstruction to localizability}

We compare the speed at two points on a regular stream-function
level. Since the Bernoulli function is constant there, a difference in speed forces a difference in pressure. The correction $w_*$ from
Proposition~\ref{s:prop:degree-existence} supplies the first-order
variation needed for this comparison.

\begin{proposition}\label{s:prop:nl-euler-field}
For every sufficiently small $\varepsilon>0$,
$u_\varepsilon\cdot\nabla p_\varepsilon\not\equiv0$.
\end{proposition}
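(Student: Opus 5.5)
The plan is to argue by contradiction and extract a first-order identity in $\varepsilon$ along the level curves of $\psi_\varepsilon$. Suppose that, along a sequence $\varepsilon_j\downarrow0$, $u_{\varepsilon}\cdot\nabla p_{\varepsilon}\equiv0$. Since $p_\varepsilon$ is axisymmetric and the meridional part of $u_\varepsilon$ is $r^{-1}\nabla^\perp\Psi_\varepsilon$, we get $\nabla^\perp\Psi_\varepsilon\cdot\nabla p_\varepsilon\equiv0$ in the meridional half-plane. Hence $p_\varepsilon$ is constant on every connected component of a regular level curve of $\Psi_\varepsilon$. On such a curve $B_{\varepsilon,\lambda}(\Psi_\varepsilon)$ is also constant, so by \eqref{s:eq:re-velocity} $|u_\varepsilon|^2$ is constant there. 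In the centred coordinates, with $r=\varepsilon^{-1}(1+\varepsilon x)$ and $F^2=2\lambda q$, this reads
\[
 G_\varepsilon:=\frac{|\nabla\psi_\varepsilon|^2+2\lambda_\varepsilon q(c)}{(1+\varepsilon x)^2}
 \quad\text{is constant on }\{\psi_\varepsilon=c\}.
\]
Fix $\rho_0\in(a,a+1)$ and $c=\phi(\rho_0)$. Since $v(\rho_0)>0$ and $\psi_\varepsilon\to U$ in $C^1$, the level curve is, for small $\varepsilon$, a closed graph $\rho=\rho_0+\varepsilon\sigma_\varepsilon(\theta)$.

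The first step is to upgrade the convergence $w_\varepsilon\to w_*$ from $\mathcal X$ to $C^1$ on compact subsets of $\Omega$. On such sets the values of $U$ and $\psi_\varepsilon$ stay away from $0$ and $2$. The quotient $\zeta$ in the proof of Proposition~\ref{s:thm:abs-scaled-limit} therefore solves a uniformly elliptic equation with smooth, bounded secant coefficient and bounded source, and interior $W^{2,p}$ estimates give $w_\varepsilon\to w_*$ in $C^1_{\mathrm{loc}}(\Omega)$. This yields $\sigma_\varepsilon\to\sigma:=-w_*(\rho_0,\theta)/v(\rho_0)$ uniformly.

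Next I would identify $w_*$. The forcing $E_{1,\lambda_*}=(v+2\rho h_{\lambda_*}(\phi))\cos\theta$ lies purely in the first cosine mode. Because $\mathcal L$ preserves angular modes, the unique zero of $\mathcal F_0$ in Proposition~\ref{s:prop:degree-existence} is $w_*=\omega(\rho)\cos\theta$. Here $\omega\in\mathcal E_1$ solves the mode-one ODE
\[
 -\omega''-\omega'/\rho+\omega/\rho^2+f'(\phi)\omega=-(v+2\rho h_{\lambda_*}(\phi)),
\]
normalized by $\int\rho v\omega=0$. Expanding $G_\varepsilon$ on the level curve to first order, the $\theta$-dependent part of $\varepsilon^{-1}(G_\varepsilon-v(\rho_0)^2-2\lambda_*q(c))$ converges to
\[
 \cos\theta\Bigl[2v\bigl(\omega'-\tfrac{v'}{v}\omega\bigr)-2\rho\bigl(v^2+2\lambda_*q(\phi)\bigr)\Bigr](\rho_0).
\]
Constancy of $G_\varepsilon$ forces this coefficient to vanish for every $\rho_0\in(a,a+1)$, that is,
\[
 v^2\,(\omega/v)'=\rho\bigl(v^2+2\lambda_*q(\phi)\bigr)\quad\text{on }(a,a+1).
\]
I would double-check the exact constants here against \eqref{s:eq:balance-defect}. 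As a consistency test, integrating this identity against the mode-one equation should reproduce the balance \eqref{s:eq:integral-balance}.

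The final and hardest step is to eliminate $\omega$. The plan is to differentiate the identity, substitute the mode-one ODE for $\omega''$, and use $f(\phi)=v'+v/\rho$ together with $f'(\phi)v=v''+v'/\rho-v/\rho^2$. This should produce a closed first-order relation $\Xi(\rho,\phi,v,\omega/v)=0$ with smooth coefficients. Together with the identity itself it gives a first-order system for $(\phi,v,\omega/v)$, or after elimination a smooth first-order ODE for $(\phi,v)$. For this system the inner-endpoint data $\phi(a)=0$, $v(a)=0$ is an equilibrium, because $q(0)=0$ and $f(0)=h_{\lambda_*}(0)=0$. Uniqueness for smooth first-order ODEs then forces $\phi\equiv0$ on $[a,a+1]$, contradicting $\phi(a+1)=2$.

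The main obstacle is carrying out this elimination cleanly. The coefficients involve $\omega/v$, which may blow up at the endpoints, so I would run the uniqueness argument from an interior point instead. Concretely, the relation should show that the quantity $\mathfrak d_{\lambda_*}$ of \eqref{s:eq:balance-defect}, together with $v$, obeys a smooth ODE near any point where it vanishes. I would first try to verify that the relation reduces to $\frac{d}{d\rho}(\rho\,\mathfrak d_{\lambda_*})=0$, or a smooth multiple of $\mathfrak d_{\lambda_*}$. Then the flatness of $v$ and $q(\phi)$ at $\rho=a$ gives $\mathfrak d_{\lambda_*}\equiv0$, i.e.\ $v=2\sqrt{\lambda_*q(\phi)}=2\sqrt{\lambda_*}\,\phi(2-\phi)$. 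This is a smooth first-order ODE for $\phi$ with equilibrium $\phi=0$, so $\phi\equiv0$, which is the desired contradiction.
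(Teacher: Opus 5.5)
Your proposal is correct and is essentially the paper's argument run by contradiction. Your $\cos\theta$ coefficient is $2A(\rho_0)$ with $A=v\omega'-v'\omega-\rho\bigl(v^2+2\lambda_*q(\phi)\bigr)$, and the paper likewise concludes via $\phi'=2\sqrt{\lambda_*}\,\phi(2-\phi)$ and uniqueness at $\phi(a)=0$. The paper only orders things directly: it first shows $A(\rho_0)\neq0$ for some $\rho_0$, then shows the speeds at $\theta=0,\pi$ on one level curve differ by $4\varepsilon A(\rho_0)+o(\varepsilon)$.

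The elimination you flag as the main obstacle is a one-line computation. Substituting the two mode-one ODEs and $f(\phi)=v'+v/\rho$ gives exactly $A'+A/\rho=\mathfrak d_{\lambda_*}$, so $A\equiv0$ on $(a,a+1)$ yields $\mathfrak d_{\lambda_*}\equiv0$ at once. You do not need to control $\omega/v$ at the endpoints or invoke flatness at $\rho=a$.
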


\begin{proof}
\step{The first-order correction.}
On compact subsets of $\Omega$, the nonlinearity is smooth. Hence, combining \eqref{s:eq:GS-centered} and  \eqref{s:eq:reference-defect},
and applying interior elliptic estimates upgrades
\eqref{s:eq:w-lambda-limit} to $w_\varepsilon\to w_*$ in every
$C^q_{\loc}(\Omega)$ norm.
Its limiting equation is
\begin{equation}\label{s:eq:nl-limit-linear-equation}
 (\Delta-f'(U))w_*=U_x+2x\bigl(f(U)+\lambda_*q'(U)\bigr).
\end{equation}
Since \(\mathcal L\) has radial coefficients and the forcing contains only the first cosine mode, uniqueness of the solution satisfying \(\int_\Omega Zw_*=0\), established in Proposition \ref{s:prop:degree-existence}, implies that \(w_*=b(\rho)\cos\theta\) with
\begin{equation}
 b''+\frac{b'}\rho-\frac b{\rho^2}-f'(\phi)b =v+2\rho\bigl(f(\phi)+\lambda_*q'(\phi)\bigr).
 \label{s:eq:nl-b-ode}
 \end{equation}
Let us also recall here that, for $v = \phi'$,
 \begin{equation}
 v''+\frac{v'}\rho-\frac v{\rho^2}-f'(\phi)v=0.
 \label{s:eq:nl-v-ode}
\end{equation}
Having these equations at hand, we introduce $A = vb' - v'b - \rho (v^2 + 2\lambda_* q(\phi))$ and point out that, subtracting $b$ times~\eqref{s:eq:nl-v-ode} from $v$ times
\eqref{s:eq:nl-b-ode}, and using $f(\phi)=v'+v/\rho$, yields
\begin{equation}\label{s:eq:speed-balance}
 A'+\frac A\rho=v^2-4\lambda_*q(\phi)=\mathfrak d_{\lambda_*}.
\end{equation}
The compatibility condition used in Proposition \ref{s:prop:degree-existence} gives the integral balance
\[
\int_a^{a+1}\rho\bigl(v^2-4\lambda_*q(\phi)\bigr)\dd\rho=0.
\]

If \(A\) vanished identically on \((a,a+1)\), equation \eqref{s:eq:speed-balance} would instead imply
\[
v^2=4\lambda_*q(\phi)
\qquad\text{throughout }(a,a+1).
\]
Since \(v=\phi'>0\), \(0<\phi<2\), and \(q(\phi)=\phi^2(2-\phi)^2\), this gives
\[
\phi'=2\sqrt{\lambda_*}\,\phi(2-\phi).
\]
The equation extends continuously to \(\rho=a\), where \(\phi(a)=0\). Uniqueness for this scalar initial-value problem forces \(\phi\equiv0\), contradicting its strict increase on \((a,a+1)\). Hence, there exists \(\rho_0\in(a,a+1)\) such that \(A(\rho_0)\ne0\).

\step{Speed along a stream-function level.}
We fix $\rho_0 \in (a,a+1)$ such that $A(\rho_0) \neq 0$ and define the scaled squared speed in centered coordinates
\[
 S_\varepsilon=
 \frac{|\nabla\psi_\varepsilon|^2+
                2\lambda_\varepsilon q(\psi_\varepsilon)}
      {(1+\varepsilon x)^2},
 \qquad |u_\varepsilon|^2=R^{-2}S_\varepsilon,
 \quad R=\varepsilon^{-1},
\]
Since $v(\rho_0)>0$, the level curve
$\psi_\varepsilon=\phi(\rho_0)$ has a regular closed component
near the circle of radius $\rho_0$, parameterized in centered polar
coordinates by
$(\rho_\varepsilon(\theta),\theta)$, with
\[
 \rho_\varepsilon(\theta)
 =\rho_0-\varepsilon\frac{b(\rho_0)}{v(\rho_0)}\cos\theta
       +o(\varepsilon).
\]
Here and in the next expansion the remainders are uniform in $\theta$.
Along this level curve, the swirl term in the numerator of \(S_\varepsilon\) is exactly \(2\lambda_\varepsilon q(\phi(\rho_0))\). Thus, expanding as well the gradient and the denominator gives
\begin{equation}\label{s:eq:speed-on-level}
 S_\varepsilon(\rho_\varepsilon(\theta),\theta)
 =G_\varepsilon(\rho_0)+2\varepsilon A(\rho_0)\cos\theta
       +o(\varepsilon),
\end{equation}
where
\[
 G_\varepsilon=v^2+2\lambda_\varepsilon q(\phi),\qquad
 A=vb'-v'b-\rho\bigl(v^2+2\lambda_*q(\phi)\bigr).
\]
The term $vb'$ comes from the gradient correction, $-v'b$ from
the displacement of the level curve, and the last term from the denominator. Let us explicitly point out that keeping
$\lambda_\varepsilon$ in $G_\varepsilon$ requires only its convergence,
not any estimate for $(\lambda_\varepsilon-\lambda_*)/\varepsilon$.

By~\eqref{s:eq:speed-on-level}, the values of $S_\varepsilon$ at $\theta=0$ and
$\theta=\pi$ on the same level curve $\psi_\varepsilon=\phi(\rho_0)$ differ
by $4\varepsilon A(\rho_0)+o(\varepsilon)$, and hence are distinct
for every sufficiently small $\varepsilon>0$. Since the Bernoulli function
is constant on this level curve, the pressure is not. Moreover, observe that the meridional velocity
\[
u_{\varepsilon,\mathrm{mer}}
=\frac1r\left(-\partial_z\Psi_\varepsilon\,e_r
+\partial_r\Psi_\varepsilon\,e_z\right),
\]
is tangent to the level curve and does not vanish there. If \(u_{\varepsilon,\mathrm{mer}}\cdot\nabla p_\varepsilon\) vanished everywhere on this curve, the tangential derivative of \(p_\varepsilon\) would vanish everywhere, forcing the pressure to be constant along the curve, and thus contradicting the pressure difference established above. Finally, axisymmetry gives \(\partial_\varphi p_\varepsilon=0\), so the azimuthal component contributes nothing to \(u_\varepsilon\cdot\nabla p_\varepsilon\). Therefore, we conclude that
$
u_\varepsilon\cdot\nabla p_\varepsilon
=u_{\varepsilon,\mathrm{mer}}\cdot\nabla p_\varepsilon
\not\equiv0.
$
\end{proof}

\begin{proof}[Proof of Theorem~\ref{thm:spatial-precise}]
Fix $a$ as in Proposition~\ref{s:prop:spec-planar-kernel} and
$\delta\in(0,1/8)$. Propositions~\ref{s:prop:degree-existence}--\ref{s:prop:nl-euler-field}
give the required fields and all their support and convergence properties
for every sufficiently small $\varepsilon>0$. The identity
$\psi_\varepsilon-U=\varepsilon w_\varepsilon$, with $w_\varepsilon$
uniformly bounded, gives the $O(\varepsilon)$ estimate. Set
$R=\varepsilon^{-1}$ and choose $R_*>a+2$ beyond all smallness
thresholds. This proves the theorem for every $R>R_*$, and fixing one
such $R$ gives Theorem~\ref{thm:spatial}.
\end{proof}

\section{Non-locally-radial planar Euler flows}\label{p:sec-reference}
We construct the planar flows by adapting the annular construction of \cite{EFR} to the framework of Sections \ref{common:sec-prelim} and \ref{common:sec-compact}. Starting from a radial profile that rises from zero and then returns to zero, we prescribe separate nonlinearities on its increasing and decreasing branches. Varying the annulus radius changes the local degree and yields a sequence of nonradial solutions converging to a radial reference profile. The localization and regularity results then give smooth compactly supported flows, and the leading angular deformation rules out local radiality.
 
We use the convention
\begin{equation}\label{eq:planar-convention}
 \nabla^\perp\psi=(\partial_{x_2}\psi,-\partial_{x_1}\psi),
 \qquad u=\nabla^\perp\psi,\qquad \omega=\curl u=-\Delta\psi,
\end{equation}
and, for all $a > 3$, set
\[
 \Omega_a=\{x\in\RR^2:a-3<|x|<a+3\},\qquad r_-=a-3,\quad r_+=a+3.
\]

\begin{theorem}\label{thm:planar-precise}
There exist an integer $\ell\geq3$, a number $a_*>4$, a nonzero radial
function $\psi_*\in C_c^\infty(\RR^2)$ and nonnegative functions
$\psi_j\in C_c^\infty(\RR^2)$ with the following properties:
\begin{enumerate}
\item Each $\psi_j$ satisfies
\begin{equation*}
 \nabla^\perp\psi_j\cdot\nabla\Delta\psi_j=0
 \quad\textup{in }\RR^2.
\end{equation*}
Consequently, $u_j=\nabla^\perp\psi_j$ solves~\eqref{eq:euler} in dimension two with a
pressure $p_j\in C_c^\infty(\RR^2)$ normalized to vanish at infinity. \smallbreak
\item The functions $\psi_j$ are not locally radial. They are invariant
under reflection across the horizontal axis and have exactly $\ell$-fold
rotational symmetry about the origin. \smallbreak

\item The supports of all the $\psi_j$ lie in one fixed compact annulus,
the function $\psi_*$ is positive precisely on $\Omega_{a_*}$, and, for every integer $q \geq 0$, $\psi_j \to \psi_*$ in $C^q(\RR^2)$ as $j \to \infty$. 
\end{enumerate}
\end{theorem}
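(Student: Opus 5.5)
The proof follows the strategy outlined in the introduction. It mirrors the three-dimensional construction, with the curvature perturbation replaced by a bifurcation in the annulus radius.

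\textbf{Reference profile and nonlinearities.} We first build a radial profile $\phi_a(\rho)$ on $\Omega_a$. It rises from $0$ at $\rho=r_-$ like $H(\rho-r_-)$ and returns to $0$ at $\rho=r_+$ like $H(r_+-\rho)$. Its derivative changes sign exactly once, and it has a middle interval on which $\Delta U_a$ equals a constant vorticity. Following \cite{EFR}, we define separate value laws $f_{\pm}$ on the increasing and decreasing branches by $f_\pm(\phi_a)=\phi_a''+\phi_a'/\rho$. These laws coincide on the constant-vorticity window. Using a cutoff in the radial variable, we then write a single spatially dependent nonlinearity $N_a(x,t)$ that equals $f_-$ near the inner circle and $f_+$ near the outer one.

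Next we need the autonomy statement (Lemma~\ref{p:lem:autonomy}). For solutions $C^0$-close to $U_a$, the level sets of $\psi$ in the interpolation region lie in the constant-vorticity window. Hence $\Delta\psi$ is locally a function of $\psi$, which gives $\nabla^\perp\psi\cdot\nabla\Delta\psi=0$. The endpoint formulas, exactly as in \eqref{eq:inner-endpoint-formulas}--\eqref{s:eq:nearlinear-f}, give growth of order $\log^2(1/d)/d^2$ for the linearized potential and the near-linear bounds \eqref{common:eq-nearlinear}.

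\textbf{Spectral input.} We work in the symmetry class of functions even in $x_2$ and invariant under rotation by $2\pi/\ell$. The linearization $-\Delta+\partial_tN_a(x,U_a)$ on $\mathcal E(\Omega_a,W)$ decomposes into modes $n\in\ell\NN$. For each mode $n$ the one-dimensional form is $\mathfrak q_{a,n}$, as in \eqref{s:eq:spec-mode-form} but with a radial weight $\rho$.

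\begin{enumerate}
\item \emph{Mode $n=1$.} The function $y=\phi_a'$ lies in the kernel. Since $y$ changes sign once, the ground-state argument shows that $\mathfrak q_{a,1}$ has exactly one negative eigenvalue and a kernel spanned by $y$.
\item \emph{Mode $n=\ell$.} We have $\mathfrak q_{a,\ell}=\mathfrak q_{a,1}+(\ell^2-1)\int|h|^2/\rho$. The lowest eigenvalue $\mu_1(a,\ell)$ is therefore continuous in $a$. We choose $\ell$ so large that it is positive for some $a_0$. As $a\to\infty$, the added term is of size $\ell^2/a^2$ while the negative eigenvalue of the $n=1$ form stays bounded below zero, so $\mu_1(a,\ell)$ becomes negative for large $a$. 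This gives a crossing at some $a_*$.
\item \emph{Transversality.} Restricting $a$ to a neighbourhood of $a_*$, we want all other modes to stay away from zero, with only simple crossings in mode $\ell$, so that the crossing number is odd. The modes $n=2\ell,3\ell,\dots$ stay positive once $\mu_1(a,\ell)$ is near $0$. Mode $0$ is handled as in Proposition~\ref{s:prop:spec-planar-kernel}.
\end{enumerate}

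\textbf{Degree and localization.} With $\tau=a$ as the parameter, we set up $\mathcal K_a(w)=\mathcal T_a[k_a(x,U_a+w)]-U_a$ on the symmetric subspace of $C_0(\overline D)$, with $D$ a fixed annulus containing every $\Omega_a$. The trivial branch is $w=0$. Proposition~\ref{common:prop-linearization}, applied to quotients $w/\|w\|$ along putative solutions, identifies the linearization at $w=0$ as $\mathcal R_a\beta_a$. The Leray--Schauder index of $\Id-\mathcal R_a\beta_a$ changes sign across $a_*$ because exactly one eigenvalue crosses zero. The Krasnoselskii--Rabinowitz argument (Proposition~\ref{p:prop:bifurcation}) then yields solutions $\psi_j=U_{a_j}+w_j$ with $w_j\ne0$, $a_j\to a_*$ and $w_j\to0$. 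Normalizing, $w_j/\|w_j\|$ converges to a multiple of $h(\rho)\cos(\ell\theta)$.

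Lemma~\ref{common:lem-exact-collar} applied to $\psi_j$ at both boundary circles gives exact zero collars, so $\psi_j$ extends by $0$. Theorem~\ref{common:lem-flat-regularity} together with its uniform remark gives $C^\infty$ regularity and uniform $C^q$ bounds, and interpolation yields $C^q$ convergence to $\psi_*=U_{a_*}$. Nonnegativity comes from the range bound of Proposition~\ref{common:prop-monotone-inverse}. The pressure is obtained from $\nabla p=-(u\cdot\nabla)u$ and vanishes outside the support.

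\textbf{Non-radiality and the main obstacle.} To rule out local radiality, we take a point where $\phi_{a_*}'\ne0$. On the corresponding component of $\{\nabla\psi_j\ne0\}$, near that radius, the level curves are perturbed by $\epsilon_j h\cos(\ell\theta)/\phi'$ to leading order. Any circle with a shifted center would produce only a mode-one perturbation, so these curves are not circles about a fixed center for large $j$. Exact $\ell$-fold symmetry follows because the limit mode $\cos(\ell\theta)$ has no finer rotational symmetry. The main obstacle I expect is the transversality in step (iii): checking that the eigenvalue of mode $\ell$ crosses zero with odd multiplicity while all other modes in the symmetry class stay bounded away from zero. If strict monotonicity in $a$ is hard to obtain, I would first try rescaling the profile to a fixed annulus and differentiating the form in $a$.
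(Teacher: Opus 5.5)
Your architecture matches the paper's: branchwise laws glued where both equal $-1$, the compact map $\mathcal K_a$, Proposition~\ref{common:prop-linearization} for the linearization, a Krasnoselskii-type degree argument, and exact collars followed by Theorem~\ref{common:lem-flat-regularity}. The genuine gap is in the spectral step.

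Your claim that mode $0$ ``is handled as in Proposition~\ref{s:prop:spec-planar-kernel}'' fails. There, $\phi'>0$ was the ground state of mode one, so $\mathfrak q_{a,1}\ge0$, and the comparison $\mu_2(a,1)\ge\frac23\mu_2(2,1)>0$ gave $\mu_2(a,0)>0$. Here $U_a'$ changes sign, so $\mu_1(a,1)<0=\mu_2(a,1)$. Min--max on the first two mode-one eigenfunctions then gives instead $\mu_2(a,0)\le-1/(a+3)^2<0$. The radial sector therefore has two negative eigenvalues, and excluding a radial zero eigenvalue requires $\mu_3(a,0)>0$. That amounts to a gap above zero in mode one that is uniform for large $a$, and nothing in your plan supplies it. Without it, $\Id-\mathcal B_a$ may fail to be invertible at your endpoints, and the kernel at the bifurcation point need not be the $\ell$-mode, so the branch could be radial. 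Your step (ii) uses the same unproved uniformity, since you need $\mu_1(a,1)$ to stay bounded away from zero as $a\to\infty$.

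The missing idea is the paper's large-radius limit. Work in $x=r-a\in(-3,3)$ and divide forms and masses by $a$. The bound $|P_a(a+x)-\bar P(x)|\le Ca^{-1}(1+\log(1/d)/d)$, with $\log(1/d)/d\le CW$, then gives $\mu_j(a,n)\to\bar\mu_j$ for each fixed $n$. Here $\bar\mu_j$ are the eigenvalues of $-\partial_x^2+\bar P$ on the fixed space $\mathcal E_\infty$. Since $\bar U'$ is a kernel element with one sign change, $\bar\mu_1<0=\bar\mu_2<\bar\mu_3$. This yields both $\mu_1(a,\ell)\to\bar\mu_1<0$ and $\mu_3(a,0)>0$ for all large $a$. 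The fixed domain also gives continuity of $\mu_j(\cdot,n)$. That continuity does not follow from the identity $\mathfrak q_{a,\ell}=\mathfrak q_{a,1}+(\ell^2-1)\int|h|^2/\rho$, because the form domains move with $a$.

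The obstacle you single out, transversality, is not one. Every $\mu_j(a,n)$ is simple: the weighted Wronskian is constant and is forced to vanish by the zero traces at the singular endpoint. No monotonicity or derivative in $a$ is needed. The paper takes $a_0$ with $\mu_1(a_0,\ell)>0$ and sets $a_c=\inf\{a\ge a_0:\mu_1(a,\ell)<0\}$. It then picks $a_1>a_c$ close to $a_c$ with $\mu_1(a_1,\ell)<0$ and the modes $k\ell$, $k\ge2$, still positive. The Morse indices at $a_0$ and $a_1$ are $2$ and $3$, and the small-sphere homotopy argument produces the branch whatever $\mu_1(\cdot,\ell)$ does inside $[a_0,a_1]$.

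Finally, your remark that a shifted circle is ``only a mode-one perturbation'' holds only to first order in the shift. The paper makes the non-radiality argument exact. The center of radiality on the band is unique, so it is fixed by the rotation through $2\pi/\ell$ and must be the origin. That contradicts the nonzero $\ell$-th Fourier coefficient of $\psi_j(r_0,\cdot)$.
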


\begin{remark} $ $
\begin{itemize}
\item[(a)] The vorticities have zero moments against every harmonic
polynomial. This follows for any $\psi\in C_c^\infty(\RR^2)$ from
integration by parts:
\begin{equation*} 
 \int_{\RR^2}P\,(-\Delta\psi)\dd x
 =-\int_{\RR^2}\psi\,\Delta P\dd x=0,
 \qquad \Delta P=0.
\end{equation*}
\item[(b)]  All parameters below are chosen independently of those in Section \ref{s:sec:profile}.
\end{itemize}
\end{remark}

\subsection{The radial family and the stationary equation}
Each intermediate value of the profile occurs on two branches, so
we do not impose a single law for its Laplacian. Following~\cite{EFR},
we make the two laws equal to $-1$ above a common threshold and
interpolate only at radii where nearby solutions stay above that
threshold. This removes explicit radial dependence along their graphs.

To obtain a region where the two laws agree, we start with a profile
of constant Laplacian and modify it only near the endpoints to make
it flat. Let $\tau_a$ be the unique solution to
\begin{equation*}
 \tau_a''+\frac1r\tau_a'=-1,
 \qquad \tau_a(r_-)=\tau_a(r_+)=0,
\end{equation*}
which has an explicit formula:
\begin{equation}\label{p:eq:torsion-explicit}
 \tau_a(r)=C_a\log\frac r{r_-}-\frac{r^2-r_-^2}{4},
 \qquad
 C_a=\frac{r_+^2-r_-^2}{4\log(r_+/r_-)}.
\end{equation}
The formula gives $\tau_a''(r)=-C_a/r^2-1/2<0$, so its maximum is
unique and nondegenerate. Moreover, expanding in $a^{-1}$ gives, in the shifted coordinate $x=r-a$,
\begin{equation}\label{p:eq:torsion-limit}
 \tau_a(a+x)\longrightarrow\bar\tau(x):=\frac{9-x^2}{2}
 \quad\hbox{in }C^q([-3,3]),
\end{equation}
for each fixed $q$, with error $O_q(a^{-1})$.

Having $\tau_a$ at hand, we can now introduce our radial profile $U_a \in C_c^{\infty}(\RR^2)$. In the boundary collars we use the flat profile $H$
introduced in \eqref{common:eq-H}, and in the interior $\tau_a$. More precisely, let $\zeta$ be a smooth nonincreasing cutoff function equal to one for $s\le1/4$ and zero for $s\ge1/2$. In the inner collar $0<s=r-r_-\le1/2$, we set
\begin{equation*}
 U_a(r)=\zeta(s)H(s)+(1-\zeta(s))\tau_a(r),\qquad s=r-r_-.
\end{equation*}
In the outer collar $0 < s = r_+-r \le1/2$, we use the identical formula with $s=r_+-r$, and  in the rest of the annulus we set  $U_a=\tau_a$. Finally, we extend $U_a$ by zero outside $\Omega_a$. 

Two margins justify this modification. On the transition intervals
$1/4\le s\le1/2$, the torsion profile dominates $H$, so the cutoff
preserves strict monotonicity on each branch. Throughout the modified
collars, both profiles have values below two, leaving a gap below the
common threshold three. By~\eqref{p:eq:torsion-limit}, the strict
inequalities
\[
 \min_{1/4\le s\le1/2}\bar\tau(3-s)>H(1/2),\qquad
 \max_{0\le s\le1/2}\bar\tau(3-s)<2
\]
give both margins for large $a$.

Thus, for some $A_0>4$ and every $a\ge A_0$, the function
$U_a\in C_c^\infty(\RR^2)$ is positive precisely on $\Omega_a$ and
has a unique critical radius in $(r_-,r_+)$, at the torsion maximum, where
$U_a''=-1$. The family depends smoothly on $a$. By
\eqref{p:eq:torsion-limit}, its shifted profiles converge in every
$C^q([-3,3])$ norm to an even function $\bar U$, positive in $(-3,3)$,
with a unique nondegenerate maximum at zero and equal to $H(3-|x|)$
near the endpoints. Increasing $A_0$ if necessary, we also have
$U_a>3+\sigma$ on $|r-a|\le1$ for a fixed $\sigma>0$.

The radial Laplacian can differ at the two radii where $U_a$ takes
one value. For $0<t<3$, we invert the increasing and decreasing branches
separately, writing their inverses as $r_{\mathrm i,a}(t)$ and
$r_{\mathrm o,a}(t)$, and define the functions
\begin{equation}\label{p:eq:f-by-inversion}
 F_{\iota,a}(t)
 =U_a''(r_{\iota,a}(t))
  +\frac{U_a'(r_{\iota,a}(t))}{r_{\iota,a}(t)},
 \quad\iota\in\{\mathrm i,\mathrm o\}.
\end{equation}
Since the modification is confined to values below two, it follows that
$F_{\iota,a}=-1$ near $t=3$. We then extend it by $F_{\iota,a}(t)=-1$ for
$t\ge3$ and $F_{\iota,a}(0)=0$. The resulting
functions are smooth for $t>0$ and depend smoothly on $a$. Both laws
become constant before the profile reaches its maximum, so no inversion
at the critical radius is needed.

Now, let $\chi\in C^\infty(\RR)$ be nondecreasing, zero on $(-\infty,-1]$, and one on $[1,\infty)$. We define
\begin{equation}\label{p:eq:f-full}
 F_a(r,t)=(1-\chi(r-a))F_{\mathrm i,a}(t)
          +\chi(r-a)F_{\mathrm o,a}(t),\qquad t\ge0.
\end{equation}
Then, after zero extension, it follows that
\begin{equation}\label{p:eq:radial-pde}
 \Delta U_a=F_a(|x|,U_a) \quad \textup{in } \RR^2.
\end{equation}
Let us however point out that the formula is only needed on annuli bounded away from the origin.  Near the origin the radial profile is identically zero.
 
\begin{lemma}\label{p:lem:autonomy}
Fix a compact interval $J\subset[A_0,\infty)$ and an annulus $D$
containing all $\overline\Omega_a$, $a\in J$. There exists $\rho_0>0$ such
that, if $a\in J$, $w\ge0$ and
$\|w-U_a\|_{C(\overline D)}<\rho_0$, then
\begin{equation}\label{p:eq:autonomy}
 \partial_rF_a(|x|,w(x))=0,\qquad x\in D.
\end{equation}
\end{lemma}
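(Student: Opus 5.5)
Since $F_a(r,t)=(1-\chi(r-a))F_{\mathrm i,a}(t)+\chi(r-a)F_{\mathrm o,a}(t)$, we have
\[
 \partial_rF_a(r,t)=\chi'(r-a)\bigl(F_{\mathrm o,a}(t)-F_{\mathrm i,a}(t)\bigr).
\]
This vanishes automatically when $|r-a|\ge1$, because there $\chi'(r-a)=0$. So the plan is to show that, for $x\in D$ with $\bigl||x|-a\bigr|<1$, the value $t=w(x)$ lies in the region where both laws equal $-1$, i.e.\ $t\ge3$. By construction, $F_{\iota,a}=-1$ near $t=3$ and was extended by $-1$ for $t\ge3$. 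Thus it suffices to have $w(x)\ge3$ wherever $\bigl||x|-a\bigr|\le1$.

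The key input is the margin recorded above: after enlarging $A_0$, we have $U_a>3+\sigma$ on $|r-a|\le1$ for a fixed $\sigma>0$. The first step is to check that this $\sigma$ can be taken uniform for $a\in J$. This follows either because the statement was made for all $a\ge A_0$, or from continuity of $a\mapsto U_a$ together with compactness of $J$ and the strict inequality at each $a$. Then set $\rho_0=\sigma$. If $\|w-U_a\|_{C(\overline D)}<\rho_0$, then for $x\in D$ with $\bigl||x|-a\bigr|\le1$ we get
\[
 w(x)>U_a(|x|)-\sigma>3 .
\]
Since $\overline\Omega_a\subset D$, the whole band $\{\bigl||x|-a\bigr|\le1\}$ lies in $\Omega_a\subset D$, so every relevant point is covered.

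Combining the two steps, on $\{\bigl||x|-a\bigr|<1\}$ both $F_{\mathrm i,a}(w(x))$ and $F_{\mathrm o,a}(w(x))$ equal $-1$, so the difference vanishes. Outside that band, $\chi'$ vanishes. This gives \eqref{p:eq:autonomy}.

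The hypothesis $w\ge0$ only ensures that $F_a(\cdot,w)$ is defined. The one point needing care, which is the main (mild) obstacle, is confirming that the laws equal $-1$ for all $t\ge3$ and not just near $3$. This holds by the stated extension. One also needs uniformity of $\sigma$ over $J$, which was handled above.
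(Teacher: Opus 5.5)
Your proof is correct and follows the paper's argument. You differentiate the interpolation to get $\partial_rF_a(r,t)=\chi'(r-a)\bigl(F_{\mathrm o,a}(t)-F_{\mathrm i,a}(t)\bigr)$, then use the uniform margin $U_a>3+\sigma$ on $|r-a|\le1$ to force $w>3$ there, where both laws equal $-1$; the only difference is that you take $\rho_0=\sigma$ while the paper takes $\rho_0<\sigma/2$, and either choice works.
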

\begin{proof}
For every fixed $t \geq 0$, it follows that
\[
 \partial_rF_a(r,t)=\chi'(r-a)
       [F_{\mathrm o,a}(t)-F_{\mathrm i,a}(t)].
\]
On the support of $\chi'$, we have that $U_a>3+\sigma$. Taking
$\rho_0<\sigma/2$ makes both laws equal to $-1$ at $t=w(x)$, and the result follows. 
\end{proof}

This is precisely the lemma allowing us to use non-autonomous nonlinearities. It essentially says that, if we obtain a nonnegative smooth solution to $\Delta w = F_a(|x|,w)$ sufficiently close to $U_a$, we can go back and obtain a solution to \eqref{eq:euler}. 

We now record the endpoint estimates needed for the common analytic
results. In the two collars,
\begin{equation}\label{p:eq:endpoint-laws}
 F_{\mathrm i,a}(H(s))=H''(s)+\frac{H'(s)}{r_-+s},\qquad
 F_{\mathrm o,a}(H(s))=H''(s)-\frac{H'(s)}{r_+-s}.
\end{equation}
Differentiating as in~\eqref{s:eq:fprime-inner}, with the opposite
curvature sign on the decreasing branch, gives
\begin{equation}\label{p:eq:endpoint-growth}
 F'_{\mathrm i,a}(H(s))\approx F'_{\mathrm o,a}(H(s))
 \approx\frac{\log^2(1/s)}{s^2},
\end{equation}
uniformly for large $a$. The value estimates~\eqref{common:eq-value-calculus}
also give, for all integers $b,j\ge0$ and every $\eta>0$,
\begin{equation}\label{p:eq:nearlinear-collar}
 |\partial_a^b\partial_t^jF_{\iota,a}(t)|
 \le C_{b,j,\eta}t^{1-j-\eta},\qquad 0<t<t_c,
 \quad\iota\in\{\mathrm i,\mathrm o\},
\end{equation}
uniformly on compact parameter intervals.
The growth bound~\eqref{p:eq:endpoint-growth} implies that both slopes
$F'_{\iota,a}(t)$ tend to $+\infty$ as $t\downarrow0$, uniformly for
large $a$.
Moreover, for  $J$ and $D$ as in Lemma~\ref{p:lem:autonomy}, the fixed spatial cutoff in~\eqref{p:eq:f-full} then yields, for every multi-index $\alpha$,
integer $j\ge0$ and $\eta>0$,
\begin{equation}\label{p:eq:F-nearlinear}
 |D_x^\alpha\partial_t^jF_a(|x|,t)|
 \le C_{\alpha,j,\eta}t^{1-j-\eta},\qquad
 0<t<t_c,\quad x\in\overline D,\ a\in J.
\end{equation}
The same bounds hold with any fixed number of parameter derivatives.

\subsection{Spectral separation}\label{p:sec-spectral}
We need an angular mode whose first eigenvalue changes sign while
all other modes in the chosen symmetry class stay away from zero.
The single zero of $U_a'$ supplies a negative direction below the
translation mode. The positive term $n^2/r^2$ makes the first
eigenvalue positive at a fixed radius for high angular frequency;
increasing the radius while keeping that frequency fixed eventually
makes it negative. We isolate this change in the even, rotationally
symmetric space, so that any kernel encountered is one-dimensional.

We set
\begin{equation*}
 P_a(r)=\partial_t F_a(r,U_a(r)),\qquad r_-<r<r_+.
\end{equation*}
The collar bound~\eqref{p:eq:endpoint-growth} supplies a positive
smooth radial weight $W_a$ in the open annulus
which equals $1+\log^2(1/d_a)/d_a^2$ for sufficiently small
$d_a=\dist(x,\partial\Omega_a)$ and is bounded above and below away from the
boundary. On a compact parameter interval, it then follows that
\begin{equation}\label{p:eq:P-weight}
 cW_a-C\le P_a\le CW_a+C.
\end{equation}
The angular-frequency-$n$ quadratic form is
\begin{equation*}
 \mathfrak q_{a,n}(q)=\int_{r_-}^{r_+}
 \left((q')^2+\left(P_a(r)+\frac{n^2}{r^2}\right)q^2\right)r\dd r,
 \qquad n\ge0,
\end{equation*}
on $\mathcal E_a^{\mathrm{rad}}=\mathcal E((r_-,r_+),W_a)$.
As in the three-dimensional construction, the weight bound and
Lemma~\ref{common:lem-density} give the finite-energy realizations
\[
 \mathcal L_{a,n}=-\partial_r^2-\frac1r\partial_r+P_a(r)+\frac{n^2}{r^2}
\]
in $L^2((r_-,r_+),r\dd r)$, with compact resolvent. We denote their
eigenvalues, in increasing order and with multiplicities, by $\mu_j(a,n)$, $j \in \NN$. Likewise, we denote by \(\mathcal L_a=-\Delta+\partial_tF_a(|x|,U_a)\) the operator associated with the full quadratic form. Note that its restriction to angular frequency \(n\) is \(\mathcal L_{a,n}\).

\begin{proposition}
\label{p:prop:spectral-input}
There exist an integer $\ell\ge3$ and parameters $a_1>a_0\ge A_0$
such that, for every $a \in J = [a_0,a_1]$, it follows that:
\begin{enumerate}[label=\textup{(\roman*)}]
\item $\mu_1(a,0) < \mu_2(a,0) < 0 < \mu_3(a,0)$. 
\item $\mu_2(a,\ell) > 0$ and $\mu_1(a,k\ell) > 0$ for every $k \geq 2$. 
\item $\mu_1(a_0,\ell) > 0$ and $\mu_1(a_1,\ell) < 0$. 
\end{enumerate}
If \(\mu_1(a,\ell)=0\), the kernel in the even, \(\ell\)-fold symmetric space is
\[
\ker\mathcal L_a
=\operatorname{span}\{q_a(r)\cos(\ell\theta)\},
\qquad q_a>0\text{ on }(a-3,a+3).
\]
Otherwise, the kernel is trivial. In particular, in this space, the endpoint operators $\mathcal{L}_{a_0}$ and $\mathcal{L}_{a_1}$ are invertible and have Morse indices \(2\) and \(3\), respectively.
\end{proposition}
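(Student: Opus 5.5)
The plan is to reduce everything to the one–dimensional forms $\mathfrak q_{a,n}$ and to anchor them on the translation mode. First I will check that $q=U_a'$ lies in $\mathcal E_a^{\mathrm{rad}}$ and that $\mathcal L_{a,1}U_a'=0$. Membership follows because $U_a'=H'(s)$ or $-H'(s)$ near the endpoints and $H'$ is flat, so $\int W_a|U_a'|^2<\infty$. For the equation, differentiate~\eqref{p:eq:radial-pde} in $x_1$:
\[
 \Delta\partial_1U_a=\partial_rF_a(|x|,U_a)\tfrac{x_1}{|x|}+\partial_tF_a(|x|,U_a)\,\partial_1U_a .
\]
By Lemma~\ref{p:lem:autonomy} with $w=U_a$ the first term vanishes, so $U_a'(r)\cos\theta$ lies in the kernel, and thus $\mathcal L_{a,1}U_a'=0$ in the finite-energy sense (the weak identity extends to $\mathcal E_a^{\mathrm{rad}}$ by Lemma~\ref{common:lem-density}).

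Next I need standard one-dimensional Sturm facts for these singular realizations. The eigenvalues $\mu_j(a,n)$ are simple, the first eigenfunction has one sign, and the $j$-th has exactly $j-1$ interior zeros. For simplicity, an eigenfunction lies in $\mathcal E$ and hence is determined by an interior Cauchy datum; for positivity, use $|q|$ in the min--max and then Harnack. For the zero count, I will argue on compact subintervals and use the collar barriers~\eqref{common:eq-collar-barriers} to control the ends. Since $U_a'$ has exactly one zero (the torsion maximum), this gives $\mu_1(a,1)<0=\mu_2(a,1)<\mu_3(a,1)$.

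From the identity $\mathfrak q_{a,n}=\mathfrak q_{a,1}+(n^2-1)\int q^2r^{-1}\dd r$, with $r\in(a-3,a+3)$, I get the comparisons
\[
 \mu_j(a,1)+\frac{n^2-1}{r_+^2}\le\mu_j(a,n)\le\mu_j(a,1)+\frac{n^2-1}{r_-^2}.
\]
Taking $n=0$ gives $\mu_2(a,0)\le -1/r_+^2<0$, and also $\mu_3(a,0)\ge\mu_3(a,1)-1/r_-^2$. For the latter I need a uniform gap $\mu_3(a,1)\ge c>0$ for large $a$.

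This uniformity is where I would pass to the shifted limit $x=r-a$. Since $P_a(a+\cdot)\to\bar P:=F'(\bar U)$ locally uniformly on $(-3,3)$, with the uniform weight comparison~\eqref{p:eq:P-weight}, the weight $r\dd r/a\to\dd x$, and $n^2/r^2$ is negligible for bounded $n$, the eigenvalues $\mu_j(a,1)$ converge to the eigenvalues $\bar\mu_j$ of $-\partial_x^2+\bar P$ on $\mathcal E((-3,3),\bar W)$. The proof of this convergence combines min--max upper bounds (cut off eigenfunctions, using density) with lower bounds from compactness of $\mathcal E\hookrightarrow L^2$ and the uniform collar bounds, exactly as in Proposition~\ref{common:prop-singular-resolvent}. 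The limit operator has $\bar U'$ in its kernel with one zero, so $\bar\mu_1<0=\bar\mu_2<\bar\mu_3$. Hence $\mu_3(a,1)\ge\bar\mu_3/2$ and $\mu_1(a,1)=\bar\mu_1+o(1)$. This spectral convergence is the main technical obstacle, because the domains move with $a$ and the potentials are singular at the ends.

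With that in hand I choose the parameters. Put $m=|\bar\mu_1|$. For mode $\ell$ I will use $\mu_1(a,\ell)=\bar\mu_1+\ell^2/a^2+o(1)+O(\ell^2/a^3)$, uniformly when $\ell/a$ stays in a compact set. I take $\ell\ge3$ large, $a_0=\ell/\sqrt{2m}$ and $a_1=\ell\sqrt{2/(3m)}$, so that $\ell^2/a_0^2=2m$ and $\ell^2/a_1^2=3m/2$; both radii exceed $A_0$ once $\ell$ is large. Then (iii) follows: $\mu_1(a_0,\ell)\approx m>0$ and $\mu_1(a_1,\ell)\approx -m/2$... I must adjust so the upper endpoint gives a negative value. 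So I instead take $\ell^2/a_1^2=m/2$, which gives $\mu_1(a_1,\ell)\approx -m/2<0$. For every $a\in[a_0,a_1]$ and $k\ge2$,
\[
 \mu_1(a,k\ell)\ge\mu_1(a,1)+\frac{4\ell^2-1}{r_+^2}\approx -m+4\cdot\frac m2>0,
\]
and $\mu_2(a,\ell)\ge\mu_2(a,1)+(\ell^2-1)/r_+^2>0$. This proves (ii), and (i) was obtained above.

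For the kernel statement, in the even, $\ell$-fold symmetric space only the modes $\cos(k\ell\theta)$, $k\ge0$, occur. Mode $0$ has no zero eigenvalue by (i), mode $k\ell$ with $k\ge2$ has none by (ii), and in mode $\ell$ only $\mu_1(a,\ell)$ can vanish. When it does, the kernel is spanned by the positive ground state $q_a(r)\cos(\ell\theta)$. Counting negative eigenvalues gives Morse index $2$ at $a_0$ (two from mode $0$) and $3$ at $a_1$ (the additional one from mode $\ell$).
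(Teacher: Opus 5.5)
Your proposal is correct, but it differs from the paper's proof in two places.

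\textbf{The translation mode.} You invoke the Sturm oscillation theorem for the singular realization and read off $\mu_2(a,1)=0$ from the single zero of $U_a'$. The paper avoids oscillation theory. On each side of the maximum $r_m$ it inserts the positive solution $y=|U_a'|$ into the ground-state identity $\mathfrak q_{a,1}(q)=\int r y^2|(q/y)'|^2\dd r$. This makes the form nonnegative on $\{q(r_m)=0\}$, so there is at most one negative eigenvalue. The kink of $|U_a'|$ at $r_m$ shows it cannot be a zero-energy minimizer, so there is at least one. Simplicity comes from the constant weighted Wronskian of two finite-energy solutions, which is bounded by $C\|y'\|_{L^2(I_\delta)}\|z'\|_{L^2(I_\delta)}\to0$. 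That argument is self-contained, whereas your oscillation step is only sketched. In particular, your stated reason for simplicity (an $\mathcal E$-eigenfunction ``is determined by an interior Cauchy datum'') does not by itself exclude a two-dimensional eigenspace. The vanishing of the Wronskian is the justification you need there.

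\textbf{The choice of $\ell$ and $J$.} This is the more substantial difference. The paper fixes $a_0\ge A_0$ and enlarges $\ell$ until $\mu_1(a_0,\ell)>0$. It then uses $\mu_1(a,\ell)\to\bar\mu_1$ for this fixed $\ell$, together with continuity in $a$, to locate the first crossing $a_c$. It takes $J$ short around $a_c$ so that $\mu_1(a,\ell)+3\ell^2/(a+3)^2>0$ persists by continuity. You instead let $a$ scale with $\ell$. Your two-sided mode comparison, with convergence in the single mode $n=1$, gives $\mu_1(a,\ell)=\bar\mu_1+\ell^2/a^2+o(1)$ uniformly for bounded $\ell/a$. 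With the corrected choice $a_0=\ell/\sqrt{2m}$, $a_1=\ell\sqrt{2/m}$, all of (i)--(iii) hold on the whole interval with explicit margins. For example, $\ell^2/a^2\ge m/2$ on $J$ gives $\mu_1(a,k\ell)\ge -m+2m-o(1)$ for $k\ge2$. Your version needs no continuity of eigenvalues in $a$ and no convergence for fixed $n\ne1$. The cost is that $\ell$, and hence $a_0$, must be large, while the paper's route works for any $a_0\ge A_0$.

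\textbf{Clean-up points.} In a write-up, drop the abandoned choice $\ell^2/a_1^2=3m/2$, which gives $\mu_1(a_1,\ell)\approx m/2>0$. Also note that your displayed two-sided bound holds as written only for $n\ge1$ and reverses for $n=0$, which is how you actually use it. Finally, the ``moving domains'' are not really an obstacle: after $x=r-a$ every shifted form lives on the same space over $(-3,3)$. The paper gets the convergence from a relative form bound there.
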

\begin{proof}
\step{The translational mode and its negative direction.}
First of all, differentiating~\eqref{p:eq:radial-pde} with respect to $r$, and  using $\partial_r F_a(r,U_a)=0$, we get that
\begin{equation*}
 \mathcal L_{a,1}U_a'=0.
\end{equation*}
The function $U_a'$ is in $\mathcal E_a^{\mathrm{rad}}$ because it is infinitely flat at the endpoints. Moreover, it has exactly one zero in the interior, at the maximum $r_m$.

On each side of the maximum $r_m$, the positive function
$y=|U_a'|$ solves the homogeneous equation, so integration by parts gives
\begin{equation*}
 \mathfrak q_{a,1}(q)=\int_{r_-}^{r_+} ry^2\left|\left(\frac qy\right)'\right|^2\dd r\ge0
\end{equation*}
for compactly supported $q$ in that half-interval. This extends to
form-domain functions with $q(r_m)=0$ by density: use
Lemma~\ref{common:lem-density} at the singular endpoints and the
usual zero-trace approximation at the regular interior point $r_m$.
The form is nonnegative on this codimension-one subspace, so it has
at most one negative direction. On the other hand, it has at least one: otherwise $|U_a'|$, whose form value is zero,
would belong to the weak kernel, whereas its derivative has a nonzero
jump at $r_m$ because $U_a''(r_m)=-1$.

For two finite-energy solutions $y,z$ of the same eigenvalue
equation, the weighted Wronskian $\mathscr W(r)=r(y(r)z'(r)-y'(r)z(r))$ is constant.
If we set  $I_\delta=(r_-,r_-+\delta)$,  integrating the constant Wronskian
and using the zero traces, we get that
\[
 \delta|\mathscr W|
 =\left|\int_{I_\delta}r(yz'-y'z)\dd r\right|
 \le C\delta\|y'\|_{L^2(I_\delta)}\|z'\|_{L^2(I_\delta)}.
\]
Dividing by $\delta$ gives
\[
 |\mathscr W|\le C\|y'\|_{L^2(r_-,r_-+\delta)}
                    \|z'\|_{L^2(r_-,r_-+\delta)}\longrightarrow0, \quad \textup{as } \delta \downarrow 0.
\]
Thus $\mathscr W=0$, and ODE uniqueness makes the solutions
proportional. This proves simplicity of every eigenvalue.
Together with
compactness of the resolvent, this proves, for every $a\ge A_0$,
\[
 \mu_1(a,1)<0,\qquad \mu_2(a,1)=0,\qquad \mu_3(a,1)>0,
 \qquad \ker\mathcal L_{a,1}=\operatorname{span}\{U_a'\}.
\]

\step{The large-radius limit.}
Since $\mathcal L_{a,n}-\mathcal L_{a,1}=(n^2-1)/r^2$, min--max gives
\begin{equation*}
 \mu_2(a,n)\ge\frac{n^2-1}{(a+3)^2}>0,\qquad n\ge2,
\end{equation*}
and
\[
 \mu_2(a,0)\le-\frac1{(a+3)^2}<0.
\]

We next let $a$ tend to infinity. The limit will give a positive gap
above the translation mode, excluding further negative radial eigenvalues,
and a negative first eigenvalue that persists in each fixed angular
frequency as its angular energy $n^2/r^2$ tends to zero.

In $x=r-a\in(-3,3)$, let $\bar P$ be the limit of $P_a(a+x)$. Note that it equals $H'''/H'$ in the endpoint collars and that, on compact subintervals, it is the smooth limit obtained from~\eqref{p:eq:f-by-inversion}. Also, near the maximum both the finite-$a$ and limiting nonlinearities are identically $-1$.
The limit form is
\[
 \overline{\mathfrak q}(q)=\int_{-3}^3\bigl((q')^2+\bar Pq^2\bigr)\dd x,
\]
on the fixed space $ \mathcal E_\infty=H_0^1(-3,3)\cap L^2(W(x)\dd x),$ with
\[
W\approx1+\frac{\log^2(1/(3-|x|))}{(3-|x|)^2},
\]
near the endpoints, with a harmless positive extension to the interior. Also, note that the limiting derivative $\bar U'$ solves
$(-\partial_x^2+\bar P)\bar U'=0$. The preceding argument with  the radial
weight replaced by one then shows that
\begin{equation*}
 \bar\mu_1<0,\qquad\bar\mu_2=0,\qquad\bar\mu_3>0.
\end{equation*}

In the coordinate $x=r-a$, divide the form and its mass inner product
by $a$, and denote the resulting form on $\mathcal E_\infty$ by
$\mathfrak q^{\mathrm{shift}}_{a,n}$. Their density is
$w_a(x)=1+x/a\to1$ uniformly. The endpoint formulas give
\begin{equation*}
 |P_a(a+x)-\bar P(x)|
 \le\frac Ca\left(1+\frac{\log(1/d)}d\right),\qquad d=3-|x|,
\end{equation*}
and the interior difference is $O(a^{-1})$. Since
$\log(1/d)/d\le CW$ and $n^2/(a+x)^2\to0$ uniformly for fixed $n$, we then have that
\begin{equation*}
 |\mathfrak q^{\mathrm{shift}}_{a,n}(q)-\overline{\mathfrak q}(q)|
 \le e_a\int_{-3}^3((q')^2+Wq^2)\dd x,\qquad e_a\to0.
\end{equation*}
The mass inner products converge as well. Add a fixed multiple of the
$L^2$ norm so that the limiting form is coercive on $\mathcal E_\infty$;
the shifted forms are then uniformly equivalent to it for large $a$.
Min--max therefore gives, for each fixed $n\ge0$ and $j\ge1$,
\begin{equation}\label{p:eq:spectral-convergence}
 \mu_j(a,n)\longrightarrow\bar\mu_j,\qquad a\to\infty.
\end{equation}
Translating the endpoints to $(-3,3)$ also gives a common form domain
near any fixed parameter. The same estimate and smooth parameter
dependence prove that $\mu_j(a,n)$ is continuous in $a$ on finite
parameter intervals.
Since $\mu_3(a,0)\to\bar\mu_3>0$ and $\mu_2(a,0)<0$ by the
bound above, increasing $A_0$ makes the radial sector have exactly two
negative eigenvalues and no zero eigenvalue for all $a\ge A_0$.

\step{Choosing and isolating the angular frequency.}
Choose $a_0\ge A_0$. Increase an integer $\ell\ge3$ until
\[
 \mu_1(a_0,\ell)>0.
\]
Indeed
$\mu_1(a_0,\ell)\ge\mu_1(a_0,0)+\ell^2/(a_0+3)^2$.
With this $\ell$ fixed,~\eqref{p:eq:spectral-convergence} gives
$\mu_1(a,\ell)\to\bar\mu_1<0$.
Set
\[
 a_c=\inf\{a\ge a_0:\mu_1(a,\ell)<0\}.
\]
Then $a_c>a_0$, is finite, and $\mu_1(a_c,\ell)=0$.
For every $k\ge2$, min--max gives
\begin{equation*}
 \mu_1(a,k\ell)\ge\mu_1(a,\ell)
                 +\frac{(k^2-1)\ell^2}{(a+3)^2}.
\end{equation*}
For $a\in[a_0,a_c]$ all these higher frequencies are positive.
The definition of $a_c$ gives parameters with $\mu_1(a,\ell)<0$
arbitrarily close to $a_c$ from the right. Continuity keeps the
right-hand side for $k=2$ positive in a neighborhood of $a_c$.
Choose $a_1>a_c$ in that neighborhood with $\mu_1(a_1,\ell)<0$;
then every higher frequency stays positive throughout $[a_c,a_1]$. Set
\begin{equation*}
 J=[a_0,a_1].
\end{equation*}
These inequalities give (i), (ii) and (iii), and the claimed Morse indices. 

For the characterization of the kernel, suppose that \(\mu_1(a,\ell)=0\). Choose a normalized minimizer \(q_a\) of the first Rayleigh quotient. Replacing it by its absolute value preserves the quotient, so we may assume \(q_a\ge0\). If \(q_a\) vanished at an interior point, its derivative would also vanish there, and uniqueness for the eigenvalue ODE would force \(q_a\equiv0\). Thus \(q_a>0\) in \((r_-,r_+)\). The Wronskian argument above, applied with \(n=\ell\), shows that the radial eigenspace is one-dimensional. Since all other angular sectors have trivial kernel, and evenness in \(\theta\) excludes the sine mode, the kernel in the stated symmetry space is exactly \(\operatorname{span}\{q_a(r)\cos(\ell\theta)\}\).
\end{proof}

\subsection{The compact equation and its linearization}
We put the moving reference annuli inside one fixed domain and
linearize the compact equation used in the degree argument. Its
source increments vanish on the reference zero region, so the
quotient argument is the same as in the three-dimensional construction, now
with moving annuli and no curvature defect.

Fix $J=[a_0,a_1]$ and $\ell$ as above, and choose
\[
 D=\{d_-<|x|<d_+\},\qquad
 0<d_-<a_0-3<a_1+3<d_+.
\]
Let $\mathcal{X}\subset C_0(\overline D)$ be the closed real subspace
\[
 v(r,-\theta)=v(r,\theta),\qquad
 v(r,\theta+2\pi/\ell)=v(r,\theta),\qquad \|v\|_{\mathcal{X}}=\|v\|_{L^\infty(D)}.
\]
By~\eqref{p:eq:endpoint-growth}, the splitting
\eqref{common:eq-monotone-splitting} applies to $N=F_a$ on
$[0,\infty)$ with a fixed cutoff equal to one for $t\le t_0$ and
zero for $t\ge2t_0$. Extend $g_a$ oddly and set $k_a=0$ for negative
arguments. Then $g_a$ is continuous, strongly monotone and of at most
linear growth, while
\[
 k_a\in C^\infty(\overline D\times\RR),\qquad k_a\ge0, \quad \textup{and} \quad
 \qquad k_a(r,t)=0\quad \textup{for } t\le t_0.
\]
Note that these properties and smooth parameter dependence hold uniformly for
$a\in J$.

Proposition~\ref{common:prop-monotone-inverse}, with $\mu=1$ and zero
boundary data, gives $\mathcal T_aF=v$ solving
\[
 -\Delta v+g_a(|x|,v)=F\quad\hbox{in }D,\qquad v\in H_0^1(D).
\]
In particular, it follows that
\begin{equation}\label{p:eq:T-bound}
 \|\mathcal T_aF\|_{L^\infty(D)}\le\|F\|_{L^\infty(D)},\qquad
 \|\mathcal T_aF-\mathcal T_aG\|_{L^\infty(D)}\le\|F-G\|_{L^\infty(D)},
\end{equation}
and $\mathcal T_a[k_a(r,U_a)]=U_a$.
For the linear inverse, we set
\[
 V_a=\partial_tg_a(r,U_a)\ge1,\qquad
 \mathcal E_a=\mathcal E(\Omega_a,V_a),\qquad
 \mathfrak a_a(v,w)=\int_{\Omega_a}(\nabla v\cdot\nabla w+V_avw)\dd x.
\]
Note that, in the endpoint collars, $V_a=P_a\approx\log^2(1/d_a)/d_a^2$, where
$d_a=\dist(\cdot,\partial\Omega_a)$ refers to the reference annulus.
After shifting by $a$, these potentials have a common endpoint weight
and vary smoothly on compact interior sets. Thus,
Proposition~\ref{common:prop-singular-resolvent} gives
$\mathcal R_a=(-\Delta+V_a)^{-1}:L^2(D)\to\mathcal E_a$,
with outputs extended by zero to $D$, and all its uniform bounds
and continuity properties.

Next, we set
\[
 \mathcal K_a(v)=\mathcal T_a[k_a(r,U_a+v)]-U_a,\qquad
 b_a=\partial_tk_a(r,U_a)\ge0,\qquad
 \mathcal B_av=\mathcal R_a(b_av),
\]
and stress that radial coefficients and uniqueness preserve the symmetries of $\mathcal{X}$. Also, Proposition \ref{common:prop-monotone-inverse} and the smooth parameter dependence of \(k_a\) and \(U_a\) imply that \((a,v)\mapsto\mathcal K_a(v)\) is continuous and maps bounded subsets of \(J\times \mathcal{X}\) into relatively compact subsets of \(\mathcal{X}\). Moreover, \(\mathcal K_a(0)=0\).
On the other hand, the smooth coefficients $b_a$ are supported at uniform positive distance inside $\Omega_a$. Their uniform continuity in $a$ and
\eqref{common:eq-R-continuity} make $\mathcal B_a$ continuous in
operator norm. This linear family is also collectively compact.

\begin{proposition}\label{p:prop:composite-linearization}
Uniformly for $a\in J$,
\begin{equation}\label{p:eq:K-expansion}
 \mathcal K_a(v)=\mathcal B_av+\mathcal N_a(v),\qquad
 \sup_{a\in J}\frac{\|\mathcal N_a(v)\|_{\mathcal{X}}}{\|v\|_{\mathcal{X}}}
 \longrightarrow0,\quad \|v\|_{\mathcal{X}}\to0.
\end{equation}
\end{proposition}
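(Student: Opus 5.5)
The proof will go by contradiction and rest on Proposition~\ref{common:prop-linearization}, in the same way as the three-dimensional limit Proposition~\ref{s:thm:abs-scaled-limit}. Suppose \eqref{p:eq:K-expansion} fails. Then there are $\epsilon_0>0$, parameters $a_j\in J$ and nonzero $v_j\in\mathcal{X}$ with $\|v_j\|_{\mathcal{X}}\to0$ such that
\[
\|\mathcal K_{a_j}(v_j)-\mathcal B_{a_j}v_j\|_{\mathcal{X}}\ge\epsilon_0\|v_j\|_{\mathcal{X}}.
\]
By compactness of $J$ we may assume $a_j\to a$. Set $\varepsilon_j=\|v_j\|_{\mathcal{X}}$, $w_j=v_j/\varepsilon_j$, $\psi_j=\mathcal T_{a_j}[k_{a_j}(r,U_{a_j}+v_j)]$ and $z_j=(\psi_j-U_{a_j})/\varepsilon_j=\mathcal K_{a_j}(v_j)/\varepsilon_j$. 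Since $\mathcal T_a[k_a(r,U_a)]=U_a$, the Lipschitz bound \eqref{p:eq:T-bound} together with smoothness of $k_a$ gives $\|z_j\|_\infty\le C$. Subtracting the equations for $\psi_j$ and $U_{a_j}$ yields
\[
-\Delta z_j+c_jz_j=\eta_j,
\]
where $c_j\ge1$ is the secant slope of $g_{a_j}(r,\cdot)$ between $U_{a_j}$ and $\psi_j$, and $\eta_j=[k_{a_j}(r,U_{a_j}+v_j)-k_{a_j}(r,U_{a_j})]/\varepsilon_j$. By Taylor's formula, $\|\eta_j-b_{a_j}w_j\|_\infty\le C\varepsilon_j$. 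The boundary data are zero, and $c_jz_j$ is bounded, so all regularity requirements are met. We take $\mu_j=1$.

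Next come the hypotheses of Proposition~\ref{common:prop-linearization}.

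\emph{Interior convergence.} On compact subsets of $\Omega_a$, the values $U_{a_j}$ stay away from $0$ and $\psi_j\to U_a$ uniformly. Smoothness of $g$ there then gives $c_j\to V_a$ locally uniformly.

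\emph{Exterior source (condition (ii)).} Outside $\Omega_{a_j}$ we have $U_{a_j}=0$ and $|v_j|\le\varepsilon_j<t_0$, so both values of $k$ vanish and $\eta_j=0$ exactly. Condition (ii) therefore holds with $\sigma_j=0$.

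\emph{Collar lower bound (condition (i)).} This is the step I expect to be the main obstacle, and it is handled as in the three-dimensional proof. In the two-sided $2\delta$-collars of $\partial\Omega_{a_j}$, both $U_{a_j}$ and $\psi_j$ take values in $[0,2H(2\delta)]$ once $C\varepsilon_j\le H(2\delta)$. By \eqref{common:eq-inverse-scale} these values are $H(s)$ with $s\le3\delta$. Near zero, $g_a=F_a$, and on the negative side $g_a$ is odd with slope at least that near $0^+$. The growth bound \eqref{p:eq:endpoint-growth} then makes every secant between such values at least $c\log^2(1/\delta)/\delta^2$. For secants touching zero, or crossing zero through the odd extension, one integrates the locally integrable derivative. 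There is one extra point compared with 3d: $\psi_j$ may be negative. Because the source $k$ is nonnegative, comparison in Proposition~\ref{common:prop-monotone-inverse} gives $\psi_j\ge0$, so the odd extension is only needed formally.

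With all hypotheses in place, Proposition~\ref{common:prop-linearization} gives $\|z_j-\mathcal R_{a_j}\eta_j\|_{C(\overline D)}\to0$. Since $\|\mathcal R_{a_j}\|_{L^\infty\to C}\le1$ by \eqref{common:eq-R-bound}, and $\|\eta_j-b_{a_j}w_j\|_\infty\le C\varepsilon_j$, we also have $\|\mathcal R_{a_j}\eta_j-\mathcal B_{a_j}w_j\|\to0$. Combining these,
\[
\|\mathcal K_{a_j}(v_j)-\mathcal B_{a_j}v_j\|_{\mathcal{X}}=\varepsilon_j\|z_j-\mathcal B_{a_j}w_j\|_{\mathcal{X}}=o(\varepsilon_j),
\]
which contradicts the choice of $\epsilon_0$. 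Uniformity in $a$ comes from letting the sequence $a_j$ vary freely. The convergence in Proposition~\ref{common:prop-linearization} is stated relative to $\mathcal R_{\tau_j}$ at the moving parameters, so no weak-star limit of the sources is needed.
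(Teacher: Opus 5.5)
Your proposal is correct and follows the paper's proof essentially step for step. The paper uses the same sequences $a_j\to a$, $v_j\to0$, the same quotient $z_j$ and source $\eta_j$, nonnegativity of $\psi_j$ from $k_a\ge0$, the collar secant bound via \eqref{common:eq-inverse-scale} and \eqref{p:eq:endpoint-growth}, and the exactly vanishing exterior source, and then applies Proposition~\ref{common:prop-linearization}; you have simply written out explicitly the contradiction argument behind the paper's closing remark that compactness of $J$ gives uniformity in $a$.
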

\begin{proof}
Take $a_j\to a\in J$ and nonzero $v_j\to0$ in $\mathcal{X}$. Write
\[
 \varepsilon_j=\|v_j\|_{\mathcal{X}},\qquad
 \varphi_j=\mathcal T_{a_j}[k_{a_j}(r,U_{a_j}+v_j)],\qquad
 z_j=\frac{\varphi_j-U_{a_j}}{\varepsilon_j},
\]
and set
\[
 \eta_j=\frac{k_{a_j}(r,U_{a_j}+v_j)-k_{a_j}(r,U_{a_j})}
                  {\varepsilon_j}.
\]
The Lipschitz bound~\eqref{p:eq:T-bound} bounds $z_j$ uniformly, and
smoothness of $k_a$ gives
\[
 \eta_j=b_{a_j}\frac{v_j}{\varepsilon_j}+O(\varepsilon_j)
 \quad\hbox{in }L^\infty(D).
\]
Also, we know that $\varphi_j\ge0$, since $k_a\ge0$ for all real arguments. Subtracting
the inverse equations gives
$-\Delta z_j+c_jz_j=\eta_j$, where $c_j\ge1$ is the secant slope of
$g_{a_j}$ between $\varphi_j$ and $U_{a_j}$. At coincident positive values
use the derivative, and at a common zero set $c_j=1$. The product
$c_jz_j$ is then bounded for each fixed $j$.

On compact subsets of $\Omega_a$, the slopes converge uniformly to
$V_a$. In a two-sided reference collar of width $2\delta$,
nonnegativity and $\|\varphi_j-U_{a_j}\|_{\mathcal{X}} \le C\varepsilon_j$ give
$0\le \varphi_j,U_{a_j}\le2H(2\delta)$ for large $j$. Hence,
\eqref{p:eq:endpoint-growth} and~\eqref{common:eq-inverse-scale} imply
\[
 c_j\ge c\,\frac{\log^2(1/\delta)}{\delta^2}
 \quad\hbox{where }z_j\ne0,
\]
for some uniform constant $c > 0$. Also, the secants ending at zero satisfy the same bound by integration of the
derivative, which is locally integrable by~\eqref{p:eq:nearlinear-collar}.
On $D\setminus\Omega_{a_j}$, both terms defining $\eta_j$ vanish
for large $j$, since $U_{a_j}=0$ and $k_a(r,t)=0$ for $t\le t_0$.
 
Proposition~\ref{common:prop-linearization} therefore applies with
zero exterior source and gives
$\|z_j-\mathcal R_{a_j}\eta_j\|_{C(\overline{D})}\to0$.
The source expansion and the uniform bound for $\mathcal R_a$ prove
the claim.

Compactness of $J$ gives uniformity in $a$.
\end{proof}

\subsection{The degree argument}
The two endpoint operators have respectively two and three negative
directions, so their local degrees are $+1$ and $-1$. If no solution
met a small sphere as $a$ varied, homotopy invariance would make
these degrees equal. We use this contradiction to obtain a sequence,
and then normalize it to identify the angular deformation.

\begin{proposition}\label{p:prop:bifurcation}
There are $a_j\to a_*\in(a_0,a_1)$ and nonzero $v_j\in \mathcal{X}$ such that
\begin{equation*}
 v_j=\mathcal K_{a_j}(v_j),\qquad\norm{v_j}_{\mathcal{X}}\longrightarrow0.
\end{equation*}
After extracting a subsequence, and choosing a positive radial
$\ell$-mode kernel function $q_*$ normalized so that $\norm{q_*}_{\mathcal{X}}=1$, it follows that
\begin{equation}\label{p:eq:tangent}
 \frac{v_j}{\norm{v_j}_{\mathcal{X}}}
 \longrightarrow \sigma_* q_*(r)\cos(\ell\theta)
 \quad\hbox{uniformly on }\overline D,
 \qquad \sigma_*\in\{-1,1\},
\end{equation}
where $q_*>0$ on $(a_*-3,a_*+3)$ and is zero outside it.
\end{proposition}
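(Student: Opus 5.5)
We argue by contradiction using Leray--Schauder degree for the compact perturbations of the identity $\Id-\mathcal K_a$ on $\mathcal{X}$, $a\in J$. For $a$ fixed and $\|v\|_{\mathcal X}$ small, \eqref{p:eq:K-expansion} gives
\[
 \Id-\mathcal K_a=(\Id-\mathcal B_a)-\mathcal N_a ,
\]
where $\mathcal N_a$ is superlinearly small uniformly in $a$.

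\emph{Step 1: invertibility at the endpoints.} We first show that $\Id-\mathcal B_a$ is invertible on $\mathcal X$ exactly when $\mathcal L_a$ has trivial kernel in the even $\ell$-fold symmetric class. A fixed point $w=\mathcal R_a(b_aw)$ lies in $\mathcal E_a$, vanishes outside $\Omega_a$, and satisfies $-\Delta w+(V_a-b_a)w=0$. Since $V_a-b_a=\partial_tF_a(|x|,U_a)$, this says $w\in\ker\mathcal L_a$. Conversely, kernel elements, extended by zero, are fixed points. By Proposition~\ref{p:prop:spectral-input}, at $a_0$ and $a_1$ the kernel in $\mathcal X$ is trivial. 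Since $\mathcal B_a$ is compact, $\Id-\mathcal B_{a_i}$ is then invertible.

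\emph{Step 2: degrees at the endpoints.} The expansion \eqref{p:eq:K-expansion} then shows that $0$ is an isolated zero. Its local index is $(-1)^{m}$, where $m$ is the sum of the algebraic multiplicities of the eigenvalues of $\mathcal B_{a_i}$ greater than $1$. We must identify $m$ with the Morse index of $\mathcal L_{a_i}$ in the symmetry class. To do this, we consider the symmetric operator $\mathcal R_a^{1/2}b_a\mathcal R_a^{1/2}$, or equivalently we use the form identity on $\mathcal E_a$: $\mathcal B_a w=\kappa w$ if and only if
\[
 \mathfrak a_a(w,\cdot)=\kappa^{-1}\int b_a w\,\cdot .
\]
Eigenvalues $\kappa>1$ then correspond, via a Sylvester/min--max count for the pencil $-\Delta+V_a-s b_a$ with $s\in[0,1]$, one-to-one with negative directions of $\mathcal Q=\mathfrak a_a-\int b_a(\cdot)^2$. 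This is exactly the count in Proposition~\ref{p:prop:spectral-input}. The eigenfunctions are smooth inside and decay at $\partial\Omega_a$, so computing the spectrum of $\mathcal B_a$ in $\mathcal X$ or in $\mathcal E_a$ gives the same answer. The eigenvalues are semisimple by self-adjointness in the $b_a$-weighted pairing. The Morse indices are $2$ and $3$, so the indices are $+1$ and $-1$.

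\emph{Step 3: the contradiction.} Suppose that for some $\rho>0$ there is no nontrivial fixed point with $a\in J$ and $0<\|v\|_{\mathcal X}\le\rho$. Then for every $0<r\le\rho$, $\Id-\mathcal K_a$ does not vanish on the sphere $\partial B_r$ for any $a\in J$. The family is continuous and compact in $(a,v)$, so homotopy invariance makes $\deg(\Id-\mathcal K_a,B_r,0)$ constant in $a$. Taking $r$ small enough that the endpoint indices equal these degrees contradicts $+1\ne-1$. Hence there are $a_j\in J$ and $v_j\neq0$ with $\|v_j\|_{\mathcal X}\to0$ and $v_j=\mathcal K_{a_j}(v_j)$. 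We pass to a subsequence with $a_j\to a_*$.

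\emph{Step 4: the limiting direction.} Set $\hat v_j=v_j/\|v_j\|_{\mathcal X}$. Then
\[
 \hat v_j=\mathcal B_{a_j}\hat v_j+o(1).
\]
Collective compactness and operator-norm continuity of $\mathcal B_a$ give a further subsequence with $\hat v_j\to\hat v$ uniformly, where $\|\hat v\|_{\mathcal X}=1$ and $\hat v=\mathcal B_{a_*}\hat v$. By Step 1, $\hat v$ is a nonzero element of $\ker\mathcal L_{a_*}$ in $\mathcal X$.

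We then show that $a_*$ is interior and that the kernel is one-dimensional. Nontrivial kernel is impossible at $a_0$ and $a_1$, so $a_*\in(a_0,a_1)$ and $\mu_1(a_*,\ell)=0$. By the kernel description in Proposition~\ref{p:prop:spectral-input}, $\hat v=c\,q(r)\cos(\ell\theta)$ with $q>0$. We normalize $q_*=q/\|q\cos(\ell\theta)\|_{\mathcal X}$; note $\|\cos(\ell\theta)\|=1$, so this is just the sup of $q$. The constraint $\|\hat v\|=1$ then forces $c=\pm1$, giving \eqref{p:eq:tangent}. Finally, $q_*$ vanishes outside the annulus because $\mathcal R_{a_*}$ outputs are extended by zero.

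\emph{Main obstacle.} I expect Step 2 to be the main difficulty: matching the Leray--Schauder index of $\Id-\mathcal B_a$ on the continuous-function space $\mathcal X$ with the Morse index of the singular form $\mathcal L_a$. The issue is the singular potential $V_a$. I would handle it through the generalized eigenproblem $\mathfrak a_a(w,\varphi)=\kappa^{-1}\int b_aw\varphi$, with $b_a$ compactly supported inside $\Omega_a$. This is a compact self-adjoint problem in $\mathcal E_a$. Monotonicity in $\kappa$ then gives the count of eigenvalues above $1$ as the number of negative eigenvalues of $\mathcal L_a$.
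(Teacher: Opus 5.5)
Your proposal is correct and follows essentially the same route as the paper. The main steps coincide: endpoint local degrees $(-1)^{m(a_i)}=\pm1$, obtained by matching the eigenvalues of $\mathcal B_a$ above $1$ (with equal multiplicities in $\mathcal X$ and in the energy space) to the Morse indices $2$ and $3$ of Proposition~\ref{p:prop:spectral-input}; a parameter-homotopy contradiction that produces small nontrivial fixed points; and a compactness argument for the normalized solutions that lands in the one-dimensional $\ell$-mode kernel at an interior $a_*$. The differences are cosmetic: you exclude fixed points in a punctured ball rather than on each sphere, and you get semisimplicity from the symmetry of $\mathcal B_a$ for the pairing $\int b_a v w$, which works directly on $\mathcal X$, instead of first transferring generalized eigenvectors to $\mathcal E_a^\ell$.
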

\begin{proof}
\step{From the energy index to degree.}
On the even, \(\ell\)-fold symmetric energy space \(\mathcal E_a^\ell\), the operator \(\mathcal B_a=\mathcal R_a(b_a\cdot)\) is compact: this follows from the compact embedding \(\mathcal E_a^\ell\hookrightarrow L^2(\Omega_a)\), boundedness of \(b_a\), and boundedness of \(\mathcal R_a:L^2(\Omega_a)\to\mathcal E_a\). Moreover,
\[
\mathfrak a_a(\mathcal B_av,w)
=\int_{\Omega_a}b_avw\,dx
=\mathfrak a_a(v,\mathcal B_aw),
\qquad
\mathfrak a_a(\mathcal B_av,v)\ge0,
\]
so it is self-adjoint and nonnegative in this inner product. Since
\(V_a-b_a=\partial_tF_a(r,U_a)\), the linearized quadratic form satisfies
\[
\mathfrak q_a(v,v)
:=\int_{\Omega_a}
\left(|\nabla v|^2+\partial_tF_a(r,U_a)v^2\right)\,dx
=\mathfrak a_a((\Id-\mathcal B_a)v,v).
\]
Note as well that the bounded coefficient \(b_a\) does not change the form domain.

Next, we compare the realizations of \(\mathcal B_a\) on \(\mathcal{X}\) and \(\mathcal E_a^\ell\). If \(v\in \mathcal{X}\) satisfies \(\mathcal B_av=\mu v\), with \(\mu\ne0\), then, it follows that
\[
v=\mu^{-1}\mathcal R_a(b_av)\in\mathcal E_a^\ell.
\]
More generally, if
\[
(\mathcal B_a-\mu \Id)v_0=0,
\qquad
(\mathcal B_a-\mu \Id)v_j=v_{j-1},
\]
then all these vectors belong to \(\mathcal E_a^\ell\), by induction using
\[
v_j=\mu^{-1}(\mathcal B_av_j-v_{j-1}).
\]
Conversely, if \(v\in\mathcal E_a^\ell\) satisfies \(\mathcal B_av=\mu v\), with \(\mu\ne0\), then
\[
-\Delta v+V_av=\mu^{-1}b_av
\quad\text{in }\Omega_a.
\]
Interior elliptic regularity gives \(v\in H^2_{\mathrm{loc}}(\Omega_a)\subset C^0_{\mathrm{loc}}(\Omega_a)\). Since \(b_a\) has compact support inside \(\Omega_a\), we obtain \(b_av\in L^\infty(D)\), after extension by zero. Proposition \ref{common:prop-singular-resolvent} applied to \(v=\mu^{-1}\mathcal R_a(b_av)\) therefore gives \(v\in \mathcal{X}\).
Thus, the two realizations have the same nonzero eigenvalues and eigenspaces. Moreover, self-adjointness in the energy space excludes nontrivial generalized eigenvectors, and the preceding induction excludes them in \(\mathcal{X}\) as well. Hence, the algebraic multiplicities agree.

By the spectral theorem and the identity for \(\mathfrak q_a\), each eigenvector of \(\mathcal B_a\) with eigenvalue \(\mu\) contributes the coefficient \(1-\mu\) to the quadratic form, while the form is positive on \(\ker\mathcal B_a\). Consequently, its Morse index \(m(a)\) equals the number of eigenvalues of \(\mathcal B_a\) greater than \(1\), counted with algebraic multiplicity in \(\mathcal{X}\). Whenever \(\Id-\mathcal B_a\) is invertible, the Leray–Schauder degree formula therefore gives
\[
\deg(\Id-\mathcal B_a,B_\rho,0)=(-1)^{m(a)},
\]
where \(B_\rho=\{v\in \mathcal{X}:\|v\|_{\mathcal{X}}<\rho\}\).

By Proposition~\ref{p:prop:spectral-input}, $\Id-\mathcal B_{a_i}$ has a
trivial kernel for $i \in \{0,1\}$, and is therefore invertible on $\mathcal{X}$ by Fredholm alternative. The Morse indices two and three computed in Proposition \ref{p:prop:spectral-input}, then give
\begin{equation}\label{p:eq:endpoint-degrees}
 \deg(\Id-\mathcal B_{a_0},B_\rho,0)=+1,
 \qquad
 \deg(\Id-\mathcal B_{a_1},B_\rho,0)=-1, \qquad \textup{for every } \rho > 0.
\end{equation}

\step{Small nontrivial solutions.}
By invertibility and~\eqref{p:eq:K-expansion}, there exists $\rho_0 > 0$ such that
$$
\|\mathcal N_{a_i}(v)\|_{\mathcal{X}} <\tfrac12\|(\Id-\mathcal B_{a_i})v\|_{\mathcal{X}}, \qquad \textup{for }v \in B_{\rho_0} \setminus\{0\} \textup{ and }i \in \{0,1\}. 
$$
Thus, the straight homotopies between the linear and nonlinear endpoint
maps, i.e.
\[
H_t^i(v)= v - \mathcal{B}_{a_i} v -t\mathcal N_{a_i}(v),
\qquad 0\le t\le1.
\]
have no zeros on $\partial B_\rho$ for every sufficiently small $\rho > 0$, and their local degrees agree
with~\eqref{p:eq:endpoint-degrees}. In particular, it follows that, for every sufficiently small \(\rho>0\),
\[
\deg(\Id-\mathcal K_{a_0},B_\rho,0)=1,
\qquad
\deg(\Id-\mathcal K_{a_1},B_\rho,0)=-1.
\]

This implies that, for each sufficiently small $\rho>0$, there must be $a\in J$ and $\norm{v}_{\mathcal{X}} =\rho$ with $v=\mathcal K_a(v)$. Otherwise, the parameter homotopy would have no boundary zero and would force the two endpoint degrees to agree. We then choose $\rho_j\downarrow0$ and corresponding solutions $a_j,v_j$. Compactness of $J$ gives convergence up to a subsequence. 

\step{The angular direction.}
We divide the equation by $\rho_j$ and, taking into account \eqref{p:eq:K-expansion}, we get that
\begin{equation*}
 w_j=\mathcal B_{a_j}w_j+o_{\mathcal{X}}(1),\qquad
 w_j=v_j/\rho_j,\qquad\norm{w_j}_{\mathcal{X}}=1.
\end{equation*}
Norm continuity and compactness of the linear family give a uniformly
convergent subsequence, with limit $w_*$ of norm one satisfying
$w_*=\mathcal B_{a_*}w_*$. The endpoint operators have no kernel, so
$a_*$ is strictly inside $J$. The identity of energy and Banach kernels,
together with Proposition~\ref{p:prop:spectral-input}, now identifies
$w_*$ with the simple $\ell$-mode kernel, giving~\eqref{p:eq:tangent}.
\end{proof}

\subsection{The smooth Euler flows and their local geometry}
The localization argument of Subsection~\ref{s:sec:localization}
applies with one zero plateau and no drift. We first recover smooth
Euler flows, then use their nonzero angular coefficient to exclude
every possible center of local radiality.

\begin{proof}[Proof of Theorem~\ref{thm:planar-precise}]
Set $\psi_j=U_{a_j}+v_j$ and $\psi_*:=U_{a_*}$.
The fixed-point equation gives nonnegative, uniformly bounded solutions
to 
$$
\Delta \psi_j = F_{a_j}(|x|,\psi_j) \quad \textup{in } D, \qquad \psi_j = 0 \quad \textup{on } \partial D.
$$

\step{Localization and convergence.}
Let $C_D$ be the union of the fixed boundary collars, disjoint from every $\overline\Omega_{a_j}$,
we have $\|\psi_j\|_{L^\infty(C_D)}\le\|v_j\|_{L^\infty(C_D)}\to0$. Thus $k_{a_j}=0$ there
for large $j$, and then
\begin{equation*}
 -\Delta\psi_j+g_{a_j}(r,\psi_j)=0.
\end{equation*}
The endpoint formulas and $g_a=F_a$ at small positive values imply
\begin{equation*}
 g_a(r,H(t))\ge H''(t)-CH'(t)\ge\tfrac12H''(t)
\end{equation*}
for small $t$, uniformly in $a\in J$ and $r\in[d_-,d_+]$. Since $g_a$ is
increasing and vanishes at zero, Lemma~\ref{common:lem-exact-collar} applies
at both boundary circles. Hence, for a fixed $\delta_D>0$ and all large $j$,
\begin{equation*}
 \psi_j(x)=0\qquad\textup{if }\dist(x,\partial D)<\delta_D.
\end{equation*}

The zero extensions solve the same equation on a slightly larger
fixed annulus. The inverse bounds give uniform $W^{2,p}(D)$, and hence
$C^{1,\alpha}(\overline{D})$, estimates. With~\eqref{p:eq:F-nearlinear},
Theorem~\ref{common:lem-flat-regularity} now gives smoothness through
every zero, flatness there, and uniform bounds of every derivative
order, as in Subsection~\ref{s:sec:localization}. The supports lie
in a common compact annulus. Since $U_a$ depends smoothly on $a$,
we have $\psi_j\to U_{a_*}$ uniformly.  The uniform $C^{q+1}$ bounds
and compactness upgrade this to the desired $C^q$ convergence.

\step{The Euler equations and pressure.}
For large $j$, Lemma~\ref{p:lem:autonomy} applies. On $\{\psi_j>0\}$,
it gives $\nabla^\perp\psi_j\cdot\nabla\Delta\psi_j=0$.
Flatness at the zero set extends this identity to the whole plane. Thus, \(u_j=\nabla^\perp\psi_j \in C_c^{\infty}(\RR^2;\RR^2)\) is a stationary planar Euler flow for some smooth pressure \(p_j\). Since the pressure gradient has compact support, normalizing
its constant value in the unbounded exterior to zero gives
$p_j\in C_c^\infty(\RR^2)$.

\step{Local centers and exact symmetry.}
First, we choose a closed annular band $B$ inside the
increasing portion of $\psi_*$ so that
\begin{equation}\label{p:eq:band-gradient}
 \partial_r\psi_*\ge c_B>0\quad\textup{on }B.
\end{equation}
By $C^1$ convergence, it then follows that $\partial_r\psi_j\ge c_B/2$ on $B$ for sufficiently large $j$. We now fix a radius $r_0$ such that $\{r = r_0\} \subset B$. The convergence in \eqref{p:eq:tangent} then gives
\begin{equation} \label{eq:angular-fourier-coefficient}
 \frac1{\pi\|v_j\|_{\mathcal{X}}}\int_0^{2\pi}
       \psi_j(r_0,\theta)\cos(\ell\theta)\dd\theta
 \longrightarrow\sigma_*q_*(r_0)\ne0.
\end{equation}

Next, suppose by contradiction that \(\psi_j\) is locally radial. Since \(B\) is connected and \(\nabla\psi_j\ne0\) there, it lies in one connected component of \(\{\nabla\psi_j\ne0\}\), on which \(\psi_j\) is radial about a fixed center \(p\). This center is unique. Indeed,  radiality about two distinct centers would force the nonzero gradient to be parallel to both corresponding radial directions, which is impossible on an open set. Moreover, rotation through \(2\pi/\ell\) preserves \(B\) and \(\psi_j\), so \(\psi_j\) is radial on \(B\) about both \(p\) and its rotated image. Uniqueness of the center implies that these points coincide, and hence \(p=0\), contradicting \eqref{eq:angular-fourier-coefficient} for $j$ sufficiently large. Therefore \(\psi_j\) is not locally radial for $j$ sufficiently large. 

To identify its discrete rotational symmetries, we set
\[
c_j:=\int_0^{2\pi}\psi_j(r_0,\theta)e^{-i\ell\theta}\,d\theta.
\]
Using again \eqref{eq:angular-fourier-coefficient} we get that \(c_j\ne0\) for $j$ sufficiently large. If \(\psi_j\) is invariant under rotation through \(\alpha\), a change of variables yields
\[
c_j=e^{i\ell\alpha}c_j,
\]
and hence, as $c_j \neq 0$, \(\alpha\in(2\pi/\ell)\mathbb Z\). Since the symmetries defining \(\mathcal{X}\) already give invariance under rotation through \(2\pi/\ell\) and reflection across the horizontal axis, \(\psi_j\) has exactly the asserted symmetries.
\end{proof}

\begin{remark}\label{p:rem:rotating}
Applying the localization of rigid rotation from~\cite{EFRrot} to the
planar flows above also yields $C^\infty$ uniformly rotating Euler
flows with compactly supported velocity.
\end{remark}

\section*{Acknowledgements}
The authors warmly thank David Ruiz for many interesting discussions on the topic of this paper. In particular, he found a very useful simplification for the three-dimensional ansatz.

This work was supported by the European Research Council (ERC) under
the European Union's Horizon 2020 research and innovation programme
through grant agreement 862342 (A.E.). A.E. also acknowledges support
from the Spanish State Research Agency through grants
PID2022-136795NB-I00 and PID2025-168410NB-I00, and from the
ICMAT--Severo Ochoa grant CEX2023-001347-S. A.J.F. acknowledges support by the grants PID2023-149451NA-I00 of MCIN/AEI/\allowbreak 10.13039/\allowbreak 501100011033/FEDER,UE; and RYC2024-049142-I, MICINN (Spain).

\section*{Declaration on the use of AI tools}
The authors used ChatGPT to assist with mathematical arguments
and calculations, the identification of errors and imprecisions, and
the organization, drafting and revision of the manuscript. The authors
take full responsibility for the mathematical results, arguments and
final content of this paper.

\end{document}